\newtheorem{theor}{Theorem}[section]
\newtheorem{lemma}[theor]{Lemma}
\newtheorem{prop}[theor]{Proposition}
\newtheorem*{question*}{Question}
\theoremstyle{definition}
\newtheorem{defn}{Definition}
\theoremstyle{remark}
\numberwithin{equation}{section}
\numberwithin{defn}{section}
\newcommand{\R}{\mathbb{R}}        % per i reali
\newcommand{\RN}{\mathbb{R}^N}
\newcommand{\Rd}{\mathbb{R}^d}
\newcommand{\vf}{\varphi}
\newcommand{\eps}{\varepsilon}
\renewcommand{\div}{\mathrm{div}\,} %divergence
\DeclareMathOperator{\Lap}{(-\Delta)} %Laplacian
\newcommand{\Ds}{\Lap^{s}}
\renewcommand{\d}{\mathrm{d}}
\newcommand{\Ls}{\mathscr{L}_s}
\newcommand{\Lsm}{\mathscr{L}_{s/2}}
\newcommand{\abs}[1]{\left| #1 \right|}
\definecolor{darkblue}{rgb}{0.05, .05, .65}
\definecolor{darkgreen}{rgb}{0, .6, .2}
\definecolor{darkred}{rgb}{0.8,0,0}
\begin{document}

\title{ \bf On a fractional thin film equation}

\author{Antonio Segatti and Juan Luis V\'azquez}
\date{}

\begin{abstract}{This paper deals with  a nonlinear degenerate parabolic equation of order $\alpha$ between 2 and 4 which is a kind of fractional version of the Thin Film Equation. Actually, this one corresponds to the limit value $\alpha=4$ while the Porous Medium Equation is the limit $\alpha=2$.  We prove  existence of a nonnegative weak solution for a general class of initial data, and establish its main properties. We also construct the special solutions in self-similar form which turn out to be explicit and compactly supported. As in the porous medium case, they are supposed to give the long time behaviour or the wide class of solutions.
This last result is proved to be true under some assumptions.

 Lastly, we consider nonlocal equations with the same nonlinear structure but with order from 4 to 6. For these equations we construct self-similar solutions that are positive and compactly supported, thus contributing to the higher order theory.
 }
\end{abstract}

\maketitle

%%%%%%%%%%%%%%%%%%%%%%%%%%%%%%%%%%%%%%%%%%%%%%%%%%%%%%%%%%%%%%%%%%%%%

\section{\bf Introduction}

In this paper we are mainly interested in the analysis of the following system of partial differential equations
\begin{equation}
\label{eq:prob_intro1}
\begin{cases}
\partial_t u - \div (m(u) \nabla p) = 0, \,\,\hbox{ in } \Rd \times (0,T)\\
p = \Ls u, \,\,\hbox{ in }\Rd \times (0,T)\\
u(x,0) = u_0(x), \,\,\,\hbox{ in } \Rd,
\end{cases}
\end{equation}
where $\Ls:=(-\Delta)^s$, $s\in (0,1)$,  is the fractional Laplacian (see, e.g., \cite{land, stein}), the dimension $d\ge 1$, and the mobility function $m$ is linear, namely $m(u) =u$.
 From a mathematical point of view, System \eqref{eq:prob_intro1} appears, at least formally, as an interpolation between the second-order nonlinear diffusion model called Porous Medium Equation (case $s=0$,
 described in the survey paper \cite{AR} and in the monograph \cite{Vaz2007}, where complete references to origins, theory and applications are given) and
 and the fourth-order Thin Film Equation (case $s=1$) for which the
 theory of existence of weak solutions in one and in higher dimensions is quite advanced. Without claiming any completeness, we refer to  \cite{bernis-fried90, BBD, BDGG, BF97, CT_thinfilm} and to the review papers \cite{BG, My}.

 As for physical applications, the system has been analysed in dimension one for $s=1/2$ and power law mobilities by Imbert-Mellet in  \cite{imbert_mellet11} on a bounded interval with Neumann boundary conditions as a model for the dynamics of cracks. The study is continued in \cite{imbert_mellet15}.
The one dimensional analysis for a general $s\in (0,1)$ has been completed in
 \cite{ranat} and \cite{ranatphd}. Selected references to the applied literature are given in those papers. We recall that Barenblatt  was quite involved in the mathematical modeling of hydraulic fractures, \cite{Bar62}.

Another mathematical motivation comes from comparison with the system studied in the papers \cite{BKM2010} in 1D and \cite{CV2011} in all dimensions, respectively. This system reads
\begin{equation}
\label{eq:prob_intro2}
\begin{cases}
\partial_t u - \div (u \nabla p) = 0, \,\,\hbox{ in } \Rd \times (0,T)\\
p = \Ls^{-1} u, \,\,\hbox{ in }\Rd \times (0,T)\\
u(x,0) = u_0(x), \,\,\,\hbox{ in } \Rd,
\end{cases}
\end{equation}
This model has been widely studied and has interesting applications \cite{BKM2010, GL1, GL2, Head}. The difference with  system \eqref{eq:prob_intro1} clearly lies in the constitutive law that relates the density $u$ with the pressure $p$, that implies that the order of differentiation is (formally) $2-2s$. Consequently, \eqref{eq:prob_intro2} can be seen as an interpolation between the porous medium equation (2nd order) and the (0-th order) superconductor model analysed by Ambrosio and Serfaty in \cite{ambro-serf}. The model is called in \cite{VazCIME} ``Porous Medium Diffusion with Nonlocal Pressure''. On the other hand, the present Model  \eqref{eq:prob_intro1} has formally order of differentiation $2+2s\in (2,4)$.

Our aim in this paper is to develop a basic theory for System (1.1).  As a first issue, we prove existence of suitably defined weak solution in the general multidimensional setting for linear mobilities (see the Section Open Problems \ref{sec:OP} for a discussion on this topic). A remarkable feature of the weak solutions we construct is positivity.
This property is proved in our general setting along the lines of papers \cite{bernis-fried90} and \cite{imbert_mellet11} and it is a nontrivial effect of the degeneracy of the mobility.
We also show that weak solutions originating from
initial conditions with finite first moment, keep their first moment finite during the whole evolution.
Based on this estimate, we will also prove (for a particular class of weak solutions, see below and Subsection \ref{ss:mo_ene}) that also the second moment, if finite at $t=0$, remains finite.

The investigation of the intermediate range is thus quite important from the mathematical point of view since both borderline cases belong to very different types of equations. We point out that uniqueness is not proved, it seems to be a difficult problem.

A second issue of our analysis concerns the existence of self-similar solutions. Our strategy has some similarity with the analysis in \cite{CVlarge} and, in general, with the analysis of the long time behavior of the porous medium equation (see \cite{Ca_To2000}). In particular, we show that self-similar solutions to \eqref{eq:prob_intro1} with the regularity provided by the existence Theorem \ref{th:existence} are related to stationary solutions of a nonlocal Fokker-Plank type equation. More precisely,
starting from a weak solution $u$ of \eqref{eq:prob_intro1}, if function $v$ is implicitly defined as
\begin{equation}
\label{eq:self_similar_change_intro}
u(x,t) = \frac{1}{(1+t)^\alpha}v\Big(\frac{x}{(1+t)^\beta}, \log (1+t)\Big)\,,
\end{equation}
with the  proper choice of $\alpha$ and $\beta$
\begin{equation}\label{eq:selfparam}
\alpha = \frac{d}{d  + 2(1+ s)}, \,\,\,\,\beta = \frac{1}{d+ 2(1+ s)}\,,
\end{equation}
then (see the details in Section \ref{sec:self_similar}) $v(y,\tau)$ is a weak solution of the following Fokker-Planck type equation
\begin{equation}
\label{eq:fokker-cahn_intro}
\begin{cases}
\partial_\tau v -\div_{y}\Big(v\big( \nabla_y w + \beta y\big)\Big)= 0,\\
w = \Ls v.
\end{cases}
\end{equation}
Among the class of stationary solutions to \eqref{eq:fokker-cahn_intro} we are interested
in those nonnegative functions for which
\begin{equation}
\label{eq:stationary_intro}
\begin{cases}
v\big( \nabla_y w + \beta y\big) = 0\,\,\,\,\hbox{ in }\Rd,\\
w= \Ls v.
\end{cases}
\end{equation}
The reason for looking at this particular class of stationary solutions is motivated by the fact that these are the stationary solutions that emerge in the long time behavior of \eqref{eq:fokker-cahn_intro} as  solutions  with zero dissipation (see Section \ref{sec:long_time}).
Recalling that we are looking for positive solutions, \eqref{eq:stationary_intro} reduces to the free boundary problem
\begin{equation}
\label{eq:stationary2_intro}
\nabla\Big( \Ls v + \frac{\beta}{2}\vert y\vert^2\Big) = 0\,\,\,\,\hbox{ on } \,\,\,\mathscr{P}:=\{ v>0\}.
\end{equation}
In principle, the geometry of the positivity set $\mathscr{P}$ can be quite complicated (see \cite{GKO} and the PhD Thesis
\cite{knu}
for the Thin Film case). In particular, $\mathscr{P}$ can be {disconnected}. However, restricting to solutions with connected support, we have a quite complete picture of the self-similar solutions to \eqref{eq:prob_intro1}. More precisely, we can show that solutions to \eqref{eq:stationary2_intro} are  indeed solutions of an obstacle problem (the obstacle being the zero level set) for the energy
\begin{equation}\label{energy}
 \mathscr{E}(v):= \frac{1}{2}\int_{\Rd}\vert \Lsm v\vert^2 \d y + \int_{\Rd}\Big(\frac{\beta}{2} \vert y\vert^2 -1 \Big)v\d y,
\end{equation}
thus showing that self-similar solutions are somehow minimal for the energy $\mathscr{E}$.
Remarkably, the self-similar solutions are radially decreasing, compactly supported and with explicit form given by  formula  \eqref{eq:self_similar_change_intro},
 with $\alpha$ and $\beta$ as in \eqref{eq:selfparam}. Moreover, the stationary profile $v$ has the form
 \begin{equation}
 \label{eq:ss_intro}
 v(y)=(C_1-C_2  |y|^2)_+^{1+s}\,,
 \end{equation}
where $C_2=C_2(d,s)>0$ (see \eqref{eq:C2value} for the exact value) while $C_1>0$ is a free constant that allows to adjust either the mass of the solution or the radius of its support.
{Showing that from the minimizers of \eqref{energy} one can obtain solutions with the
explicit form \eqref{eq:ss_intro} and with
the free constant $C_1$ requires some work.
In particular, our analysis relies on the following
steps. At first, by scaling and comparison and relying
on the results of Dyda (\cite{dyda}) we show
that the solution of the obstacle problem for \eqref{energy}
has the explicit form (the $D$ in the subscript
refers to Dyda)
\begin{equation}
 \label{eq:explicit_obstacle_intro}
 v_D(y):= \frac{1}{\lambda^{2s}\kappa_{s,d}}(1- \lambda^2 \vert y\vert^2)_{+}^{1+s},
 \end{equation}
where $\lambda:=\sqrt{\beta/(2\gamma_{s,d})}\,$ and is supported in $B_{R_D}$ where $R_D=1/\lambda$.
In a second step, by a further scaling, we finally
obtain \eqref{eq:ss_intro}
}
These questions are discussed in full detail in Subsection 4.2.1.

Note that the limit cases $s=0$ and $s=1$, are known and agree with this formula. For $s=0$ we get the well-known Barenblatt profile
\begin{equation}
v=(C_1-C_2|y|^2)_+
\end{equation}
for the porous medium case, that was found around 1950 in papers by Zeldovich-Kompanyeets \cite{ZK1950} and Barenblatt \cite{Bar52} (they deal with general power-like mobilities  $m(u)=u^k$). For $s=1$ we get the zero-angle profile
\begin{equation}
v=(C_1-C_2|y|^2)_+^2
\end{equation}
for the corresponding Thin Film equation (see \cite{SH}, \cite{BPW} and \cite{BF97}).
The similarity exponents $\alpha$ and $\beta$ also agree, being based only on dimensional considerations. It is interesting to note that these results are somewhat similar to the ones obtained in \cite{CVlarge} for the porous medium with fractional pressure, which is a quite different setting.  Remarkably,  the self-similar solutions of that problem follow formulas \eqref{eq:self_similar_change_intro} and \eqref{eq:ss_intro} with $s\in (0,1)$ replaced by $-s$, cf. \cite{BKM2010}, \cite{BIK} and \cite{VazAbel}. In this way we get a panorama of related self-similar patterns for equations of (formal) order ranging from 0 to 4.

In all cases the self-similar solutions are of the type called {\sl source-type solutions}, which means that the initial data of $u(x,t)$ is necessarily a point mass distribution, i.e., a Dirac delta. This property follows easily from  the conservation of mass due to the divergence form of the equation and the compact expanding support that shrinks to a point as $t\to 0$. Actually, all of these solutions have free boundaries of the form
$|x|=R \, t^{\beta}$. The study of the behaviour and regularity of free boundaries for solutions with general initial data is a difficult topic
(see Section \ref{sec:OP} in this paper for some discussion).

Our analysis is purely variational and uses
symmetrization comparison arguments to prove the compactness and radial symmetry of the support. Moreover, the analysis  works in any dimension of space and for any $s\in (0,1)$. We must point out that our analysis is restricted  to a linear mobility function.
The general case of power function mobility is considered, with a different analysis,
only in dimension one and for $s=1/2$ in the paper
 \cite{imbert_mellet15}.
In particular, the self-similar solution \eqref{eq:ss_intro}
 corresponds to the solution of the ``Zero Toughness Case"  for
 dimension one  in \cite{imbert_mellet15} with a linear mobility function and $s=1/2$.

A third issue of the paper is  the long-time behavior of the weak solutions to \eqref{eq:prob_intro1}. As in \cite{CVlarge} and in \cite{Ca_To2000}, this is done by working on the Fokker-Planck equation
\eqref{eq:fokker-cahn_intro}.

As we have already mentioned, if we rescale according to \eqref{eq:self_similar_change_intro} a weak solution of \eqref{eq:prob_intro1} we get a weak solution of the Fokker-Planck equation \eqref{eq:fokker-cahn_intro} that preserves mass and positivity. The aim is then to prove that the rescaled orbits converge to our selected class of self-similar solutions. This is achieved at the prize of accepting some regularity assumption that restricts the class for which we can justify the classical study of long time behaviour. We  explain the problem at the beginning of Section \ref{sec:long_time}.
Let us now say that a main ingredient in the proof of the needed energy-dissipation estimate is the following equality (see Lemma \ref{lem:fourier})
\begin{equation}
\label{eq:fourier_nice_intro}
\frac{d-2s}{2}\int_{\Rd}\vert \Lsm u\vert^2 \d x = -\int_{\Rd}p (x\cdot \nabla u) \d x.
\end{equation}
This identity furnishes the exact balance between the second moment and the fractional energy of Section \ref{sec:long_time}. At present, we are able to prove \eqref{eq:fourier_nice_intro}
for functions that satisfy a suitable decay at infinity.
As the proof will show, this is needed to ensure the finiteness of the righthand side of \eqref{eq:fourier_nice_intro}.
Therefore, we investigate the long time behaviour for weak solutions for which
the right hand side is finite (we refer to \ref{lem:fourier} for the precise assumption).
It is important to observe that weak solutions with compact support
actually satisfy \eqref{eq:fourier_nice_intro}. It is an open problem
to prove that \eqref{eq:fourier_nice_intro} holds for all weak solutions.

It is interesting to note that an analogous identity holds
also the weak solutions of the fractional porous medium equation \eqref{eq:prob_intro2} constructed in \cite{CV2011} (see Lemma \ref{lemma:key_fract_pm}).
In this case the term in the left hand side is the energy for which \eqref{eq:prob_intro2} is a Wasserstein gradient flow (see \cite{LMS}).

The first step in the long time behaviour analysis is to prove (see Theorem \ref{th:long_time1}) that
for large times the weak solutions to the Fokker-Planck equation approach the stationary solutions, up to the extraction of a subsequence.
It is interesting to observe that the above energy $\mathscr{E}$ \eqref{energy}  decreases on  weak solutions to the Fokker-Planck equation (namely, properly rescaled
weak solutions to \eqref{eq:prob_intro1}), thus suggesting that the long-time behaviour of the weak solutions of \eqref{eq:prob_intro1} can be described by the constructed self-similar solutions.  This is indeed the case, as we prove in this paper,
under a connectedness condition on the positivity set of the cluster points for large times of the weak solutions of the  nonlocal Fokker-Planck equation.

Due to our success in constructing self-similar solutions for Equation \eqref{eq:prob_intro1}, and also the interest in treating nonlinear parabolic equations of even higher order, we devote another section to discuss the existence of self-similar solutions for equations of the type
\begin{equation}
\label{eq:prob_intro3}
\begin{cases}
\partial_t u - \div (u \nabla p) = 0 \,\,\hbox{ in } \Rd \times (0,T),\\
p = \Ls(-\Delta u) \,\,\hbox{ in }\Rd \times (0,T),\\
u(x,0) = u_0(x) \,\,\,\hbox{ in } \Rd.
\end{cases}
\end{equation}
with $0<s<1$ (hence the total order of the equation goes from 4 to 6). We find explicit compactly supported and nonnegative self-similar solutions with a Barenblatt profile of the type similar to \eqref{eq:ss_intro}, that is solutions $u(x,t)$ of the self-similar form \eqref{eq:self_similar_change_intro}
with adjusted similarity exponents
\begin{equation}\label{eq:selfparam2}
\alpha = \frac{d}{d  + 2(2+ s)}, \,\,\,\,\beta = \frac{1}{d+ 2(2+ s)}\,,
\end{equation}
and profile of the Barenblatt type:
{ \begin{equation}
\label{eq:ss_ho}
v(y) = (C_1- C_2\vert y\vert^2)_{+}^{2+s},
\end{equation}
}
This holds for  all $0<s<1$, the constant $C_2=C_2(s,d)$ is fixed and $C_1>0$ is a free constant. See whole details in  Section \ref{sec.higher4} where parameter $C_2$ is explicitly computed. Its value
is consistent with corresponding  self-similar solution for the Thin Film equation in one dimension mentioned in  \cite{BPW}.

It is worthwhile commenting on the repeated appearance of the Barenblatt profiles, that looks surprising. We recall that these profiles appear in the Porous Medium equation
 $u_t=\nabla(u^{m-1}\nabla u)=\Delta (u^m/m)$, for $m>1$, in the form
 \begin{equation}\label{sss.pme}
 v=(C_1-C_2|y|^2)^{\sigma}
 \end{equation}
in all the range of exponents $0<\sigma<\infty$ since $\sigma=1/(m-1)$, and they are quite relevant at all levels of the theory,  as amply documented in \cite{Vaz2007}. As a consequence of our results in our paper, we find that they appear as relevant  self-similar solution for the nonlocal equations \eqref{eq:prob_intro1}, \eqref{eq:prob_intro2} and \eqref{eq:prob_intro3}, and they are expected to play a big role in the theory.
 As a further observation, notice that the solution profile \eqref{eq:ss_intro} coincides with the PME solution profile \eqref{sss.pme} for the precise choices $m-1=1/(1+s)$, while  \eqref{eq:ss_ho} leads to a similar identification with the PME when $m-1=1/(2+s)$
(see \cite{DMc} and \cite{McMS} where this similarity between the Porous Medium equation and the Thin Film equation is noticed and used).

\medskip

\noindent {\sc Outline of results.}
We gather preliminary material in Section \ref{sec:material}.
In Section \ref{sec:existence} we discuss the existence of a suitably defined weak solution
The very important topic of existence of self-similar solutions is settled in  Section \ref{sec:self_similar}, and the long-time convergence to a stationary solution is studied in Section \ref{sec:long_time}. We develop the higher order application in Section  \ref{sec.higher4}. A final section contains a number of %comments and
open directions.

%%%%%%%%%%%%%%%%%%%%%%%%%%%%%%%%%%%%%
\section{\bf Preliminary Material}
\label{sec:material}
In this section we collect some of the material that is needed for our analysis.

First of all, we recall that the Fractional Laplacian $\Ds$ ($s\in (0,1)$
is the nonlocal operator defined, at least for functions in the Schwartz class $\mathcal{S}(\Rd)$, as
\begin{equation}
\label{de:def_Deltas}
\Ds v(x) = C(d,s)\hbox{p.v.}\int_{\Rd}\frac{v(x)-v(y)}{\vert x-y\vert^{d+2s}}\d y,
\end{equation}
where $\hbox{p.v.}$ denotes the principal value and $c(d,s)$ is a scaling constant. If we define the Fourier transform of $v$ as
\begin{equation}
\label{eq:Fourier_trans}
\mathfrak{F}v(\xi)=\hat v(\xi) :=
 (2\pi)^{-d/2}\int_{\Rd}e^{-i\xi\cdot x} v(x)\d x, \,\,\,\,\xi\in \Rd, \,\,\,v\in \mathcal{S}(\Rd),
\end{equation}
then the Fractional Laplacian can be equivalently defined
as the operator with symbol $\vert\xi\vert^{2s}$, namely
\begin{equation}
\label{eq:fractional_f}
\widehat{\Ds v}(\cdot) = |\xi|^{2s}\hat{v}(\cdot),\,\,\,\,\forall v\in \mathcal{S}(\Rd).
\end{equation}
{For a function in $\mathcal{S}(\Rd)$ and for $s\in (0,1)$
we define, component-wise, the operator $\Lsm \circ \nabla$ by
\[
(\Lsm\circ \nabla) u = \Lsm (\nabla u).
\]
Note that using the Fourier transform we have that
$\nabla$ and $\Lsm$ commute. More precisely there holds for any $j=1,\ldots, d$
\[
\widehat{\Lsm \frac{\partial u}{\partial x_j}} = c_d i \vert \xi\vert^s \xi_j \hat{u}
= \widehat{\frac{\partial }{\partial x_j}\Lsm u}.
\]
Moreover, again using the Fourier transform, we can express the seminorm in $H^{1+s}(\Rd)$ ($s\in (0,1)$) using this operator. Namely,
\[
\| u\|_{\dot{H}^{1+s}(\Rd)}^2 = \int_{\Rd}\vert \xi\vert^{2+ 2s}\vert \hat{u}\vert^2\hbox{d}\xi =
\int_{\Rd}\vert \Lsm(\nabla u)\vert^2\hbox{d}x
\]
}

%%%%%%%%%%%%%%%%%%%%%%%%%%%%%%%%%%%%%%%%%%%%%%%%%%%%%%%%%%%%%%%%%%%

\section{\bf Existence of a weak solution}
\label{sec:existence}

We discuss the existence of {nonnegative } weak solution to system  \eqref{eq:prob_intro1}:
\begin{equation*}
\begin{cases}
\partial_t u - \div (u \nabla p) = 0, \,\,\hbox{ in } \Rd \times (0,T),\\
p = \Ls u, \,\,\hbox{ in }\Rd \times (0,T),\\
u(x,0) = u_0(x), \,\,\,\hbox{ in } \Rd.
\end{cases}
\end{equation*}
Weak solutions are defined as follows.
\begin{defn}
\label{def:weak}{\sl
Given $u_0 \in L^1_{loc}(\Rd)$ and nonnegative,
we say that $u$ is a weak solution of
 \eqref{eq:prob_intro1} if
 \begin{enumerate}
 \item $u\ge 0$ a.e. on $\Rd\times (0,+\infty),$
 \item $u\in L^\infty(0,+\infty; H^{s}(\Rd))\cap L^2(0,+\infty;H^{1+s}(\Rd)),$
 \item $p=\Ls u \in L^2(0,+\infty;H^{1-s}(\Rd)),$
 \item The following relation holds for any test
function $\varphi\in C^{\infty}_{c}(\Rd\times [0,+\infty))$
\begin{align}
\label{eq:weak_formulation}
-\iint_{Q} u\partial_t \vf \d x \d t
-\int_{Q} p u \Delta \vf \,\d x \d t
-\int_{Q} p\nabla  u \cdot\nabla \vf \,\d x \d t =  \int_{\Rd} u_0 \vf(x,0) \d x,\nonumber\\
 p = \Ls u\,\,\,\hbox{ a.e. in }  Q=\Rd\times(0,+\infty). \normalcolor \quad \qquad \qquad
\end{align}
\end{enumerate}
}
\end{defn}

\medskip
Here is the Existence Theorem.
\begin{theor}
\label{th:existence}
Given $u_0:\mathbb{R}^d\to \mathbb{R}$ { measurable and nonnegative} such that $u_0\in H^s(\Rd)$ and such that
\begin{equation}
\label{eq:initial_entropy}
\mathscr{F}(u_0):=\int_{\Rd}u_0 \log u_0 \,\d x < + \infty,
\end{equation}
there exists a weak solution $u$ according to the Definition \ref{def:weak} that moreover satisfies
%Mass conservation
\begin{enumerate}
 \item {\sl  Mass Conservation}
\begin{equation}
\label{eq:mass_cons}
\int_{\Rd} u(x,t) \,\d x = \int_{\Rd}u_0(x) \d x\,\hbox{ for a.a. } t\in (0,+\infty),
\end{equation}

\item {\sl  Entropy Estimate}
\begin{equation}
\label{eq:entropy_est}
\mathscr{F}(u(t)) + \int_{0}^{t}\int_{\Rd}\vert \Lsm(\nabla u)\vert^2 \d x\d r
\le \mathscr{F}(u_0)\,\,\hbox{ for a.a. }  t \in (0,+\infty),
\end{equation}
%Energy
\item {\sl  Energy Estimate}
\begin{equation}
\label{eq:energy_est}
\frac{1}{2}\int_{\Rd}\vert \Lsm u(t)\vert^2 \d x +
\int_{0}^t\int_{\Rd}\xi^2 \d x \d r
 \le
\frac{1}{2}\int_{\Rd}\vert \Lsm u_0\vert^2 \d x\,\,\hbox{ for a.a. } t \in (0,+\infty),
\end{equation}
where the vector field ${\bf\xi}\in L^2(0,+\infty;L^2(\Rd))$
 satisfies
\begin{equation}
\label{eq:def_xi}
  \nabla(u p) - p\nabla u = u^{1/2}\xi \,\,\,\,\,\hbox{ almost everywhere in }\Rd\times (0,+\infty).
\end{equation}
\end{enumerate}
\end{theor}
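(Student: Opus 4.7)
My plan is to adapt the classical Bernis--Friedman strategy for the Thin Film equation to the present nonlocal setting. The equation has the formal structure of a Wasserstein gradient flow for the energy $\frac{1}{2}\int_{\Rd}|\Lsm u|^2\,\d x$, so the natural approach is to produce approximate solutions by regularizing the degenerate mobility, derive the three estimates uniformly in the regularization parameter, and then pass to the limit. A spatial truncation to a large ball $B_R$ (with appropriate conditions for $\Ls$) may be needed to construct the approximants rigorously and would be removed by a subsequent or diagonal limit.

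Concretely, I would choose a family of smoothed mobilities $m_\eps\in C^1([0,\infty))$ with $m_\eps(0)=0$, $m_\eps(u)\ge\delta(\eps)>0$ on $(0,\infty)$, and $m_\eps(u)\to u$. The vanishing at $0$ would preserve non-negativity of the approximate solutions (item~(1) of Definition~\ref{def:weak}), while strict positivity on $(0,\infty)$ would make the approximate equation
\begin{equation*}
\partial_t u_\eps = \div\bigl(m_\eps(u_\eps)\nabla \Ls u_\eps\bigr)
\end{equation*}
non-degenerate. Existence for the approximate problem could then be obtained either by a Faedo--Galerkin approximation in a spectral basis adapted to $\Ls$ or by the JKO minimizing-movements scheme driven by the energy $\frac{1}{2}\int|\Lsm u|^2\,\d x$; the latter directly encodes the energy inequality.

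The three uniform a priori estimates would then come from three test-function choices. Mass conservation \eqref{eq:mass_cons} follows from the divergence form, tested against a smooth spatial cutoff. For the entropy \eqref{eq:entropy_est}, I would define $G_\eps$ by $G_\eps''(u)=1/m_\eps(u)$, so that $G_\eps(u)\to u\log u - u$, and test formally with $G'_\eps(u_\eps)$; using $G''_\eps m_\eps=1$ this would give
\begin{equation*}
\frac{d}{dt}\int_{\Rd}G_\eps(u_\eps)\,\d x
=-\int_{\Rd}\nabla u_\eps\cdot \nabla \Ls u_\eps\,\d x
=-\int_{\Rd}|\Lsm\nabla u_\eps|^2\,\d x,
\end{equation*}
leading to a uniform $L^2_t\dot H^{1+s}_x$ bound. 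For the energy \eqref{eq:energy_est}, testing with $p_\eps=\Ls u_\eps$ would yield the dissipation $\int m_\eps(u_\eps)|\nabla p_\eps|^2\,\d x$; setting $\xi_\eps:=m_\eps(u_\eps)^{1/2}\nabla p_\eps$ would package this as a uniform $L^2(Q)$ bound that survives the limit.

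For the passage to the limit, the entropy bound combined with a negative-order bound on $\partial_t u_\eps$ (read off the equation) would give, via Aubin--Lions, strong $L^2_{\mathrm{loc}}$ convergence $u_\eps\to u$ and hence $m_\eps(u_\eps)^{1/2}\to u^{1/2}$ strongly. Weak convergence $\xi_\eps\rightharpoonup \xi$ in $L^2$ would then imply that the flux $m_\eps(u_\eps)\nabla p_\eps=m_\eps(u_\eps)^{1/2}\xi_\eps$ converges weakly to $u^{1/2}\xi$; rewriting this flux equivalently as $\nabla(u_\eps p_\eps)-p_\eps\nabla u_\eps$ would permit its identification in the weak formulation \eqref{eq:weak_formulation} and yield \eqref{eq:def_xi}, with the energy inequality following from weak lower semicontinuity. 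I expect the main obstacles to be: (i) selecting $m_\eps$ compatibly with both the entropy constraint $G''_\eps m_\eps=1$ and the non-negativity preservation, and rigorously justifying the formal entropy computation (the test function $G'_\eps(u_\eps)$ can be singular where $u_\eps$ is close to zero, so an additional mollification of $u_\eps$ is typically needed); and (ii) identifying the weak limit of $m_\eps(u_\eps)\nabla p_\eps$ on the degeneracy set of $u$, which is precisely the role of the Bernis--Friedman auxiliary field $\xi$ in Definition~\ref{def:weak}.
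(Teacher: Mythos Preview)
Your high-level strategy is correct and aligns with the paper's approach: regularized mobility, energy and entropy estimates, Aubin--Lions compactness, and identification of the flux via the auxiliary field $\xi$. However, there is a genuine gap in your choice of approximate mobility and your nonnegativity argument.

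You propose $m_\eps\in C^1$ with $m_\eps(0)=0$ and $m_\eps(u)\ge\delta(\eps)>0$ on $(0,\infty)$, and claim that the vanishing at $0$ preserves nonnegativity of the approximants. This fails for two reasons. First, these two requirements are incompatible with continuity at $0$; and if $m_\eps$ genuinely vanishes at $0$ the approximate equation is still degenerate there, which defeats the purpose of the regularization and leaves you with exactly the difficulty you flag in obstacle~(i). Second, and more fundamentally, the equation has order $2+2s>2$, so there is no maximum or comparison principle available: a mobility vanishing at zero does \emph{not} force the approximate solution to stay nonnegative.

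The paper instead takes $m_\eps(y)=y^+ +\eps$, which is bounded below by $\eps>0$ everywhere. The approximate solutions $u_\eps$ are then allowed to go negative. Nonnegativity of the limit is recovered a posteriori from the entropy: the function $f_\eps$ with $f_\eps''=1/m_\eps$ and $f_\eps(1)=f_\eps'(1)=0$ satisfies $f_\eps(y)\ge y^2/(2\eps)$ for $y<0$, so the uniform bound $\sup_\eps\sup_t\int_{\Rd} f_\eps(u_\eps(t))\,\d x<\infty$ rules out any negativity of $u$ on a set of positive measure as $\eps\to0$. This is the actual Bernis--Friedman mechanism. A bonus of this choice is that $f_\eps''\le 1/\eps$, so $f_\eps'$ is globally Lipschitz and testing with $f_\eps'(u_\eps)$ is rigorously justified without the extra mollification you anticipate; your obstacle~(i) thus disappears once the mobility is chosen correctly.
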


\noindent {\bf Important functional remark.}
The vector field $\xi$ emerges as a weak $L^2$ limit of a sequence, in the approximation scheme we introduce
for the proof of the Existence Theorem, and it is related to the product $u^{1/2}\nabla p$. In particular, due to the nonlinear and degenerate structure
of the system \eqref{eq:prob_intro1} we are not able to rigorously identify $\xi =u^{1/2}\nabla p$, as the formal estimates suggest.
However, thanks to the characterization \eqref{eq:def_xi} we can conclude that
in the regions of $\Rd\times (0,+\infty)$ in which $u>0$ we have \ $\xi = u^{-1/2}(\nabla(u p) - p\nabla u)$
and, if $p$ were regular enough to give a pointwise meaning to $\nabla p$,
 we would have (still in the regions where $u$ is nonzero) the plain expression \ $\xi = u^{1/2}\nabla p$.
 On the contrary, in the regions where $u\equiv 0$, \eqref{eq:def_xi} gives no detailed information on $\xi$, we know that  $\xi\in L^2$ in space and time.

\subsection{Approximate problem and main estimates}
\label{ss:approx_problem}
We approximate Equation \eqref{eq:prob_intro1} following \cite{bernis-fried90} and \cite{imbert_mellet11}.
For any $\eps>0$ and $M>0$ we consider the (strictly positive and bounded) mobility function $m_\eps :\R\to (0, \infty)$
defined by
\begin{equation}
\label{eq:mobility_approx}
m_\eps^M(y) := \min\left\{M,m_\eps(y)\right\},
\end{equation}

where $m_\eps(y):=y^+ +\eps$ and $y^+ = \max\left\{y, 0\right\}$ for $y\in \R$.

We then consider the following nondegenerate approximate problem
\begin{equation}
\label{eq:approx_prob_strong_M}
\begin{cases}
\partial_t u - \div (m_\eps^M(u) \nabla p) = 0 \,\,\hbox{ in } \Rd \times (0,T),\\
p = \Ls u \,\,\hbox{ in }\Rd \times (0,T),\\
u(\cdot,0) = u_0(\cdot) \,\,\,\hbox{ in } \Rd.
\end{cases}
\end{equation}
To be precise the problem above should be intended in the distributional sense on $\Rd\times (0,T)$.
Note however that since $0<\eps\le m_\eps^M\le M$, the first equation is not degenerate and with bounded mobility and this will imply that the approximate solutions $u_\eps$
(for notational simplicity, when no confusion arises we do not
write the dependence on $M$)
 are regular enough for positive times
to justify  all the estimates we perform at the approximate level. See details below.

\subsubsection{Existence for Problem \eqref{eq:approx_prob_strong_M}}

\label{sss:existence_approx_probl}
The existence of an approximate solution follows from a nested approximation scheme.
Given a bounded domain $\Omega$, for any $s\in (0,1)$ we introduce the Hilbert space
\[
\mathcal{X}_s(\Omega):= \left\{ v\in H^s(\Rd): v\equiv 0 \,\,\,\hbox{ a.e. in }\Rd\setminus \Omega\right\},\,\,\,\,\|v\|_{X_s(\Omega)}^2
:=\int_{\Rd}\vert\Lsm v\vert^2\d x.
\]

\noindent \sc Step 1\rm. We let $\tau>0$ and $R>0$ and we consider the following stationary problem:

\noindent {(SP): \sl Given $v\in \mathcal{X}_s(B_R(0))$ to find $u\in \mathcal{X}_s(B_R(0))$}
\begin{equation}
\label{eq:approx_approx_probl}
\begin{cases}
u  = v + \tau\div (m_\eps^{M}(u) \nabla p) \,\,\,\,\hbox{ in } B_R(0),\\
p = \Ls u\,\,\,\hbox{ in } B_R(0).
\end{cases}
\end{equation}

This problem is related (see below)
to the implicit Euler scheme for the
evolution
\[
\begin{cases}
\partial_t u=\div (m_\eps^{M}(u) \nabla p)
 \,\,\,\,\hbox{ in } B_R(0)\times (0,+\infty),\\
p = \Ls u\,\,\,\hbox{ in } B_R(0)\times (0,+\infty),\\
u\equiv 0,\,\,\,\hbox{ in } \Rd\setminus B_R(0).
\end{cases}
\]
Now we discuss, using the Leray-Schauder fixed point Theorem,
the existence of a solution of (SP). %\eqref{eq:approx_approx_probl}.
To this end, we let $\sigma\in [0,1]$ and we implement the following scheme
\begin{enumerate}
\item Given $\bar u\in \mathcal{X}_s(B_R(0))$, we let $p\in H^1(B_R(0))$
the weak unique solution
of
\begin{equation}
\label{eq:fixed_point1}
\begin{cases}
\tau\div(m_\eps^{M}(\bar u)\nabla p) = \bar u- v,\\
p = 0 \qquad \mbox{on}  \ \partial B_R(0).
\end{cases}
\end{equation}
\item Given $p$ from step $1$, we let $u\in \mathcal{X}_s(B_R(0))$ the unique solution of
\begin{equation}
\label{eq:fixed_point2}
\begin{cases}
\Ls u = \sigma p \,\,\,\,\,{ \hbox{ in } B_R(0)}\\
u = 0\,\,\,\,{ \hbox{ in }\mathbb{R}^d\setminus B_R(0)}.
\end{cases}
\end{equation}
\end{enumerate}
Therefore, the procedure above produces
a map $A:\mathcal{X}_s(B_R(0))\times [0,1]\to \mathcal{X}_s(B_R(0))$ such that
\[
A:(\bar u, \sigma)\mapsto u,
\]
where $(u,p)\in \mathcal{X}_s(B_R(0))\times H^1(B_R(0)) $
is the unique solution of  \eqref{eq:fixed_point1}-\eqref{eq:fixed_point2}.
We can check that the map $A$ has the following properties

\noindent (1) $A(u,0)=0$ for any $u\in \mathcal{X}_s(B_R(0))$.

\noindent (2) $A$ is compact.

\noindent (3) There exists $M>0$ such that
\begin{equation}
\label{eq:leray_schauder}
\| u\|_{\mathcal{X}_s(B_R(0))} \le M, \,\,\,\,\forall (u,\sigma) \,\,\,\hbox{ satisfying } u = A(u,\sigma).
\end{equation}

Then, the Leray-Schauder Fixed Point Theorem (see \cite[Theorem 11.6]{gilb_trud}) gives the
existence of a fixed point for the map
\[
A_1 u = A(u,1),\,\,\,\,\hbox{ for } u \in \mathcal{X}_s(B_R(0)),
\]
namely a solution of \eqref{eq:approx_approx_probl}.

The first two properties listed above are evident. In particular, the second comes from fractional elliptic regularity.
Concerning this last point, note that from \eqref{eq:fixed_point2} and the fact that $p\in H^1(B_R(0))\subset L^2(B_R(0))$ we
 conclude that, at least, $u\in H^{3s/2-\delta}(\Rd)$ for any $\delta>0$
(see \cite[Corollary 1.1]{ASSS}). In particular, this last space is compactly embedded in $\mathcal{X}_s(B_R(0))$.  More regularity can be obtained by bootstrapping but we will not use it.

We still have to verify the boundedness property \eqref{eq:leray_schauder}. To this end, let $u\in \mathcal{X}_s(B_R(0))$ such that $u = A(u,\sigma)$. Recall that $0< \eps\le m_\eps^M\le M$ and thus $m_\eps^M \nabla p\in H^1(B_R(0))$.
Then we have
\[
\begin{cases}
\int_{B_R(0)}(u-v) \phi\d x + \int_{B_R(0)}m_\eps^{M}(u)\nabla p\cdot \nabla \phi\, \d x = 0,\,\,\,\,\forall \phi\in H^1_{0}(B_R(0))\\[4pt]
\int_{\Rd}\Lsm u\, \Lsm\psi\d x = \sigma \int_{\Rd}p \psi\,\d x,\,\,\,\,\forall \psi \in X_s(B_R(0))
\end{cases}
\]

If $\sigma=0$ there is nothing to check, hence we can assume $\sigma>0$.
We take $\phi =p$ in the first equation (we still denote with $p$ the truncation to $0$ of $p$
outside $B_R(0)$), and $\psi = (u-v)/\sigma$ in the second equation.
All this is justified in the above mentioned regularity framework.
We thus obtain
\begin{align*}
\frac{1}{\sigma}\Big(\int_{\Rd}\vert \Lsm u\vert^2\d x - \int_{\Rd}\Lsm u \,\Lsm v\d x \Big)& = \int_{\Rd}(u-v)p\d x\nonumber\\
& = -
\tau\int_{B_R(0)}m_\eps^{M}(u)\vert\nabla p\vert^2 \d x\le 0,
\end{align*}
 that easily implies \eqref{eq:leray_schauder}.

\noindent \sc Step 2\rm. Next, we tackle the evolution process. { Given $u_0\in H^s(\Rd)$, we consider a smooth function that is supported in $B_R(0)$ and such that $u^0(R)\xrightarrow{R\to+\infty}u_0$ in $H^s(\Rd)$}.
%{\color{red}I'm wondering if we can directly take $u_0^{R}\equiv u_0$.}
 Then, we
introduce the uniform partition $\mathcal{P}$ of $(0,+\infty)$, i.e.,
\[
\mathcal{P}:=\left\{0=t_0<t_1<\ldots<t_k<\ldots \right\}, \,\,\,\,\tau:=t_i-t_{i-1}, \,\,\,\,\lim_{k\to +\infty}t_k = +\infty.
\]
Then, we
iteratively solve \eqref{eq:approx_approx_probl} with $v=u^0, u^1, \ldots, u^{k-1}, \ldots$, where $u^k$ is a solution
of \eqref{eq:approx_approx_probl} with $v=u_{k-1}$. In a  standard way we introduce the piecewise-linear ($\hat u_k$) and
the piecewise-constant $(\bar u_k)$ interpolants of the discrete values $u_k$. We set
\begin{align*}
\hat u_k(0)&:= u^0_R,\,\,\,\,\,\hat{u}_k(t):= \alpha_k(t) u_k + (1-\alpha_k(t))u^{k-1},\\
\bar{u}_k(0)&:= u^0_R,\,\,\,\,\,\bar{u}_k(t):= u_k\,\,\,\,\hbox{for} \,\,\,\,t\in ((k-1)\tau,k\tau], \,\,\,k\ge 1,
\end{align*}
 where $\alpha_k(t):=(t-(k-1)\tau)/\tau$ for $t\in ((k-1)\tau,k\tau]$ and $k\ge 1$.
 The couple $(\hat{u}_k, \bar{u}_k)$ solves
 \begin{equation}
 \label{eq:time_discrete}
 \begin{cases}
 \partial_t \hat{u}_k =\div (m_\eps^{M}(\bar{u}_k) \nabla \bar{p}_k)\,\,\hbox{ in } \,\,\,B_R(0)\times (0,+\infty),\\
 \bar{p}_k = \Ls \bar{u}_k,\,\,\hbox{ in } \,\,\,B_R(0)\times (0,+\infty),\\
\bar{u}_k = 0\,\,\hbox{ in } (\Rd\setminus B_R(0))\times (0,+\infty),\,\,\,\,\bar{u}_k(0) = u^0_R\,\,\,\,\hbox{ in } B_R(0).
 \end{cases}
 \end{equation}
Now, in order to pass $\tau\to 0$ we perform some a priori estimates on $\hat{u}_k$ and $\bar u_k$.
First of all, since $\hat{u}_k \equiv 0$ in $\Rd\setminus B_R(0)$, we have that $\partial_t \hat u_k\equiv 0$
in $\Rd\setminus B_R(0)$ and thus the second equation in \eqref{eq:time_discrete} gives, for any $t\in (0,+\infty)$,
\[
\int_{B_R(0)} \partial_t \hat u_k \bar p_k \d x = \int_{\Rd} \partial_t \hat u_k \bar p_k \d x = \int_{\Rd}\partial_t \hat u_k \Ls \bar u_k \d x.
\]
Therefore, fixing $T = {\tau}N$ for some $N\in \mathbb{N}$ and integrating the above
relation on $(0,T)$ we have (recall that $2a(a-b) = a^2 + (a-b)^2 - b^2$ for any $a,b\in \mathbb{R}$)
\begin{align}
\label{eq:time_discrete1}
&\int_{0}^T\int_{\Rd}\partial_t \hat u_k \Ls \bar u_k \d x = \sum_{k=1}^N\int_{\Rd}\Lsm u_k (\Lsm u_k - \Lsm u_{k-1})\d x\nonumber\\
&= \frac{1}{2}\|\Lsm \bar u_k(T)\|^2_{L^2(\Rd)} + \sum_{k=1}^N\|\Lsm u_k - \Lsm u_{k-1}\|_{L^2(\Rd)}^2 - \frac{1}{2}\|\Lsm u^0_R\|^2_{L^2(\Rd)}
\end{align}
Moreover, the first equation in \eqref{eq:time_discrete} gives
\[
\int_{0}^T\int_{B_R(0)} \partial_t \hat u_k \bar p_k \d x \d r= -\int_{0}^T\int_{B_R(0)}m_\eps^{M}(\bar u_k) \vert\nabla \bar p_k\vert^2 \d x\d r
%\le -\eps\int_{0}^T\int_{B_R(0)} \vert\nabla \bar p^k\vert^2 \d x\d r,
\]
and thus we have the estimate on the discrete solution
\begin{equation}
\label{eq:discrete_est}
 \frac{1}{2}\|\Lsm \bar u_k(T)\|^2_{L^2(\Rd)} +   \int_{0}^T\int_{B_R(0)} m_\eps^{M}(\bar u_k)\vert\nabla \bar p_k\vert^2 \d x\d r
 \le \frac{1}{2}\|\Lsm u^0_R\|^2_{L^2(\Rd)}.
\end{equation}
This estimate is the core of the existence theory for \eqref{eq:approx_prob_strong_M} and produces one of two estimates available
for \eqref{eq:prob_intro1}.
Note that, for any fixed $\eps>0$,
\[
\int_{0}^T\int_{B_R(0)}m_\eps^{M}(\bar u_k) \vert\nabla \bar p_k\vert^2 \d x\d r \ge \eps\int_{0}^T\int_{B_R(0)} \vert\nabla \bar p_k\vert^2 \d x\d r,
\]
and thus a comparison in the first equation gives that the time derivative $\partial_t \hat u^k$ is bounded in
$L^2(0,T;W^{-1,q}(B_R(0)))$, uniformly in $M$ and in $R$, for some $q>1$.
In particular, since $\eps>0$ is kept fixed, the bounds above are sufficient to pass to the limit with respect to $\tau$ via standard compactness arguments and find in the limit a solution $u_R$ of the following problem:
\begin{equation}
\label{eq:approx_domain}
\begin{cases}
\partial_t u = \div(m_\eps^M(u)\nabla p)\,\,\,\,\hbox{ in }B_R(0) \times (0,T),\\
p = \Ls u \,\,\hbox{ in }B_R(0) \times (0,T),\\
u = 0\,\,\hbox{ in } (\Rd\setminus B_R(0))\times (0,+\infty),\,\,\,\,\,u(\cdot,0) = u^0_R(\cdot) \,\,\,\hbox{ in } B_R(0).
\end{cases}
\end{equation}
Note that for $u_R$ we have,
\begin{equation}
 \frac{1}{2}\|\Lsm  u_R(T)\|^2_{L^2(\Rd)} +   \int_{0}^T\int_{B_R(0)} m_\eps^M(u_R)\vert\nabla  p_R\vert^2 \d x\d r
 \le \frac{1}{2}\|\Lsm u^0_R\|^2_{L^2(\Rd)}.
\end{equation}

\noindent \sc Step 3\rm.
Now, since $u^0_R \xrightarrow{R\to +\infty}u^0$ in $H^s(\Rd)$, the estimate above is uniform w.r.t. to $R$.
Then, again as before we can easily pass to the limit in \eqref{eq:approx_domain} and obtain a solution $u_{\eps,M}$
of \eqref{eq:approx_prob_strong_M} with, at least, the energy regularity $(u_{\eps,M},p_{\eps,M})\in L^\infty(0,+\infty;H^s(\Rd))\cap L^2(0,+\infty;H^1(\Rd))$, namely it satisfies
\begin{align}
\label{eq:energy_M}
\frac{1}{2}\|\Lsm  u_{\eps,M}(T)\|^2_{L^2(\Rd)} +   \int_{0}^T\int_{\Rd} m_\eps^M(u_{\eps,M})\vert\nabla  p_{\eps,M}\vert^2 \d x\d r
 \le \frac{1}{2}\|\Lsm u^0\|^2_{L^2(\Rd)}
\end{align}
and the companion estimate
\begin{equation}
\label{eq:uH1}
\| \nabla p_{\eps,M}\|_{L^2(0,+\infty;L^2(\Rd))}\le \frac{1}{2\eps}
\|\Lsm u^0\|^2_{L^2(\Rd)}.
\end{equation}
The above estimate
 and a comparison in \eqref{eq:approx_prob_strong_M} guarantees that
$\partial_t u_{\eps,M}$ is bounded, uniformly in $M$, in
$L^2(0,T;W^{-1,q}(\Rd))$ for some $q>1$.
Note that for any fixed $\eps$ and $M$ we also have that
\begin{equation}
\label{eq:uthm1}
\partial_t u_{\eps,M}\in L^2(0,+\infty;H^{-1}(\Rd)).
\end{equation}
The membership of $\partial_t u_{\eps,M}$ to this space of distributions is clearly not uniform with respect to $\eps$ and $M$ as
it heavily depends on the boundedness and nondegenerate character
of $m_\eps^M$.

At the end, we can let $M\to +\infty$ and obtain a (weak) solution $u_\eps$ of
\begin{equation}
\label{eq:approx_prob_strong}
\begin{cases}
\partial_t u - \div (m_\eps(u) \nabla p) = 0 \,\,\hbox{ in } \Rd \times (0,T),\\
p = \Ls u \,\,\hbox{ in }\Rd \times (0,T),\\
u(\cdot,0) = u_0(\cdot) \,\,\,\hbox{ in } \Rd,
\end{cases}
\end{equation}
where we recall $m_\eps (y) :=y^+ + \eps$.

\subsection{Uniform estimates with respect to $\eps$: Energy and Entropy Estimate}
\label{ss:estimates}
In this Subsection we derive the two basic estimates, uniform on the approximate parameter $\eps$, on the solution $u_\eps$ of the approximate
problem \eqref{eq:approx_prob_strong}, namely the Energy Estimate and the Entropy Estimate
which correspond to the estimate \eqref{eq:energy_est} and \eqref{eq:entropy_est} in the limit $\eps\to 0$, respectively.

\noindent{\bf Energy Estimate }
The Energy Estimate follows by semicontinuity from the analogous estimate \eqref{eq:energy_M}. Note that
$m_\eps^{M}(u_{\eps,M})\xrightarrow{M\to +\infty} m_\eps(u_\eps)$ strongly in $L^2(0,T;L^2(\Rd))$.
We have, for almost any $t\le T$
\begin{equation}
\label{eq:second_est}
\frac{1}{2}\int_{\Rd}\vert \Lsm u_\eps(t)\vert^2 \d x + \int_{0}^t\int_{\Rd}m_\eps(u_\eps) \vert \nabla p_\eps\vert^2 \d x \d t \le
\frac{1}{2}\int_{\Rd}\vert \Lsm u_0\vert^2 \d x,
\end{equation}
that is,
\begin{align}
\label{eq:eq:second_est_infty}
\frac{1}{2}\int_{\Rd}\vert \Lsm u_\eps(t)\vert^2 \d x + \int_{0}^{\infty}\int_{\Rd}m_\eps(u_\eps) \vert \nabla p_\eps\vert^2 \d x \d t \le
\frac{1}{2}\int_{\Rd}\vert \Lsm u_0\vert^2 \d x,
\end{align}
valid for a.a.  $t>0$.  Moreover, we observe that $u_\eps$ is indeed a bit more regular in space. In fact, being $p_\eps = \Ls u_\eps \in L^2(0,+\infty;H^1(\Rd))$ (recall that $m_\eps(u_\eps)\ge \eps$ a.e. in $\Rd\times (0,+\infty)$) we have
that $u_\eps\in L^2(0,+\infty;H^{1+2s}(\Rd))$. We will show that this estimate
produces, in the $\eps\to 0$ limit, the Energy Estimate \eqref{eq:energy_est}.

\medskip

{\noindent \bf Entropy Estimate}
System \eqref{eq:approx_prob_strong} admits a further
estimate that is in principle not uniform with respect to the parameter $\eps$. But this estimate will produce
the Entropy Estimate \eqref{eq:entropy_est} in the limit $\eps\to 0$.
We will need this observation:
%%%%%%%%%%%%%%%%%%%%%%%%%%%%%%%%%%%%%%%%%%%%%%%%%%%%%%%%%%%%%%%%%%%%%%%%%
%\subsubsection{A priori estimates}
%We perform some estimates on $u_\eps$ that are uniform with respect to $\eps$.
%Moreover, in view of the long time analysis we are going to perform in  Section \ref{sec:long_time},
%we will mark those estimates that are also uniform with respect to the final time $T$.
%
for any $\eps>0$, we consider the smooth and positive real function $f_\eps$ such that
\begin{equation}
\label{eq:approx_entropy}
f_\eps '' = \frac{1}{m_\eps} \,\,\,\hbox{ in }\mathbb{R}.
\end{equation}
Without loss of generality we can choose $f_\eps$ in such a way that
$f_\eps(1) = f_\eps^{'}(1)= 0$. Thus,
\begin{equation}
\label{eq:def_f}
f_\eps(y) = \int_{1}^y\big( \int_{1}^w \frac{1}{r^+ + \eps} \d r \big) \d w,\,\,\,\,\,y\in \mathbb{R}.
\end{equation}
An important property of $f_\eps$ is that, when $y<0$, there holds
\begin{equation}
\label{eq:y<0}
f_\eps(y) \ge \frac{y^2}{2\eps}.
\end{equation}
To prove the above estimate we observe that for $y<0$
we have
\[
f_\eps(y) = \int_{y}^1\big( \int_{w}^1 \frac{1}{r^+ + \eps} \d r \big) \d w.
\]
Thus, setting $g(w):=\int_{w}^1 \frac{1}{r^+ + \eps} \d r$, we immediately have
that
\[
f_\eps(y)\ge \int_{y}^0 g(w) \d w.
\]
Moreover, when $w<0$, there holds
$ g(w) \ge \frac{1}{\eps}w. $

As a result, we have
\[
f_\eps(y) \ge \int_{y}^0 g(w) \d w \ge \frac{1}{\eps}\int_{y}^0 w \d w = \frac{y^2}{2\eps}.
\]

\medskip

In order to fully justify the argument, we work at the approximate level of problem
\eqref{eq:approx_prob_strong_M}. Therefore we let $u_{\eps,M}$ be a solution of
\eqref{eq:approx_prob_strong_M} and we consider, for any $M>0$, a positive real function $f_{\eps,M}$ such that
\[
f''_{\eps,M} = \frac{1}{m_{\eps}^M}.
\]
Note that $f_{\eps,M}$ is defined as in \eqref{eq:def_f} and that $f_{\eps,M}\xrightarrow{M\to +\infty}f_\eps$
for any $y\in \R$.

We test  \eqref{eq:approx_prob_strong_M} with $f'_{\eps,M}(u_{\eps,M})$.
Note that, being $u_{\eps,M}\in L^2(0,+\infty;H^{1+2s}(\Rd))$ then
$f'_{\eps,M}(u_{\eps,M})\in L^2(0,+\infty;H^{1}(\Rd))$, at least, together with $\partial_{t}u_{\eps,M}$.
Thus, the computations are justified.

Therefore, using that $p_{\eps,M} = \Ls u_{\eps,M}$ and that $f_{\eps,M} '' = \frac{1}{m_{\eps,M}}$, integrating with respect to time
we get
\begin{align}
\label{eq:first_est_M}
\int_{\Rd}f_{\eps,M}(u_{\eps,M}(t)) \d x + \int_{0}^t\int_{\Rd}\vert \Lsm(\nabla u_{\eps,M})\vert^2 \d x \d t = \int_{\Rd}f_{\eps,M}(u_0) \d x,
\,\,\,\hbox{ for a.a. } t\le T.
\end{align}

Therefore, if we let $M\to +\infty$, we get
the Entropy Estimate
\begin{align}
\label{eq:first_est}
\int_{\Rd}f_\eps(u_{\eps}(t)) \d x + \int_{0}^t\int_{\Rd}\vert \Lsm(\nabla u_{\eps})\vert^2 \d x \d t \le\int_{\Rd}f_{\eps}(u_0) \d x,
\,\,\,\hbox{ for a.a. } t\le T.
\end{align}

\medskip

%%%%%%%%%%%%%%%%%%%%%%%%%%%%%%%%%%%%%%%%%%%%%%%%%%%%%%%%%%%%%%%%%%%%%%

\subsection{Passage to the limit: Proof of Theorem \ref{th:existence}}
\label{ss:limit_proc}
The energy and the entropy estimate give some important uniform estimates (with respect to $\eps$)
on the approximate solutions $u_\eps$. We work on bounded time intervals $(0,T)$ with $T>0$ for compactness reasons.
First of all, the energy estimate \eqref{eq:second_est}
gives that the sequence $\Lsm u_\eps$ is bounded, uniformly with respect to $\eps$, in $L^{\infty}(0,T; L^2(\Rd))$,
namely $u_\eps$ is bounded in $L^\infty(0,T; \dot{H}^s(\Rd))$.

Thus, the Hardy-Littlewood-Sobolev inequality \cite[Theorem V.1]{stein} furnishes that $u_\eps$ is bounded in $L^\infty(0,T;L^{\frac{2d}{d-2s}}(\Rd))$.
Consequently, we have that $m_\eps(u_\eps)$ is bounded in $L^\infty(0,T;L^{\frac{2d}{d-2s}}_{loc}(\Rd))$.
%Consequently, we have that $u_\eps$ is bounded in $L^\infty(0,T;H^s_{loc}(\Rd))$.
 The entropy estimate
\eqref{eq:first_est} gives that $\Lsm \nabla u_\eps$ is bounded in $L^2(0,T;L^2(\Rd))$. Hence,
 we gain some spatial regularity for $u_\eps$ and for
$p_\eps = \Ls u_\eps$, namely
\begin{equation}
\label{eq:regularity_up}
\| u_\eps\|_{L^2(0,T;\dot{H}^{1+s}(\Rd))} + \| p_\eps\|_{L^2(0,T;\dot{H}^{1-s}(\Rd))} \le C,
\end{equation}
with $C$ possibly depending on $T$.
The energy estimate \eqref{eq:second_est}
gives that
\[
\sqrt{m_\eps(u_\eps)}  \nabla p_\eps \,\,\,\hbox{ is uniformly bounded in } L^2(0,T;L^2(\Rd)).
\]
Thus, since $\sqrt{m_\eps(u_\eps)}$ is bounded in $L^\infty(0,T; L^{2p_s}_{loc}(\Rd))$ ($p_s:= \frac{2d}{d-2s}$) we get
that
\[
m_\eps(u_\eps)  \nabla p_\eps \,\,\,\hbox{ is uniformly bounded in } L^2(0,T;L^{\frac{2p_s}{1+p_s}}_{loc}(\Rd)).
\]
Consequently, a comparison in the equation
\eqref{eq:approx_prob_strong} gives the estimate on the time derivative $\partial_t u_\eps$,
namely
\begin{equation}
\label{eq:est_time_der}
\partial_t u_\eps \,\,\,\hbox{ uniformly bounded in } L^2(0,T;W^{-1, \frac{2p_s}{1+p_s}}(\Rd)).
\end{equation}

Then, the Aubin-Lions compactness lemma
gives that for any $\delta>0$, $u_\eps$ is strongly compact in
$L^2(0,T; H^{1+s -\delta}(K))$, for any compact $K\subset \Rd$. Thus, there exists $u\in L^2(0,T; H^{1+s}_{loc}(\Rd))$ and a subsequence of $\eps$ for which
\begin{align}
\label{eq:weak_u_Lp}
& u_\eps \xrightarrow{\eps\to 0} u \,\,\,\,\hbox{ weakly star in } L^\infty(0,T; L^{\frac{2d}{d-2s}}(\Rd))\\
\label{eq:strong_u}
&  u_\eps \xrightarrow{\eps\to 0} u \,\,\,\,\hbox{ strongly in } L^2(0,T; H^{1+s -\delta}(K)) \,\,\,\hbox{ for any } K \,\,\hbox{ compact in } \Rd.
\end{align}
Moreover, $u_\eps \xrightarrow{\eps \to 0} u $ almost everywhere in $\Rd\times (0,T)$. In particular, we have
that $m_\eps(u_\eps) \xrightarrow{\eps\to 0} (u)^{+}$ almost everywhere in $\Rd\times (0,T)$.
Finally,
since we have that for any $\vf \in C^{\infty}_{c}(\Rd\times [0,T))$
\[
\int_{0}^T\int_{\Rd}\Ls u_\eps \vf \d x \d t = \int_{0}^T \int_{\Rd}u_\eps \Ls \vf \d x \d t \xrightarrow{\eps\to 0}
\int_{0}^T\int_{\Rd} u \Ls\vf \d x \d t,
\]
we conclude that, denoting with $p$ the weak-star limit of $p_\eps$ in $L^{\infty}(0,T; H^{-s}(\Rd))$,
we have
\[
p = \Ls u,
\]
at least in the sense of distributions.
Actually much more is true. In fact,
the estimate \eqref{eq:regularity_up} gives that
$p_\eps$ is bounded in $L^2(0,T;H^{1-s}(\Rd))$. Thus, we have that
\begin{equation}
\label{eq:weak_conv_p}
p_\eps \xrightarrow{\eps \to 0}  p \,\,\,\hbox{ weakly in }L^2(0,T;H^{1-s}(\Rd)),
\end{equation}
which implies that $p$ is in $L^2(0,T;\dot{H}^{1-s}(\Rd))$ and that the relation $p = \Ls u$ holds, at least, almost everywhere in
$\Rd\times (0,T)$.

We have all the ingredients to pass to the limit in the following weak formulation of \eqref{eq:approx_prob_strong}
\begin{align}
\label{eq:weak_approx}
\int_{0}^{T}\int_{\Rd} u_\eps\partial_t \vf \d x \d t &-\int_{0}^T\int_{\Rd}p_\eps\nabla (m_\eps(u_\eps)) \cdot \nabla \vf \,\d x \d t
- \int_{0}^T\int_{\Rd}m_\eps(u_\eps)p_\eps \Delta \vf \,\d x \d t \nonumber\\
&= - \int_{\Rd} u_{0} \vf(x,0) \d x\,\,\,\,\,\forall \vf \in C^{\infty}_c(\Rd\times [0,+\infty)),\nonumber\\
p_\eps & = \Ls u_\eps\,\,\,\hbox{ a.e. in } \Rd\times(0,T)
\end{align}
We note that the first term in the left-hand side
converges to the expected limit thanks, e.g., to the dominated convergence.
Now we pass to the limit in the nonlinear term.
Since (see \cite[Lemme  1.1]{Stampacchia65}) almost everywhere there holds
\[
\nabla(u_\eps)_+ =\mathscr{H}(u_\eps)\nabla u_\eps:=
\begin{cases}
\nabla u_\eps \,\,\,\,\,\,\left\{u_\eps{> 0} \right\},\\
0, \,\,\,\,\hbox{ otherwise }
\end{cases}
\]
($\mathscr{H}$ is the Heaviside function)
\begin{equation}
\label{eq:limit_nonlin}
\int_{0}^T\int_{\Rd}p_\eps\nabla(m_\eps(u_\eps)) \cdot \nabla \vf \,\d x \d t
= \int_{0}^T\int_{\Rd }p_\eps\mathscr{H}(u_\eps)\nabla u_\eps\cdot\nabla \vf\,\d x \d t.
\end{equation}
Moreover, we decompose
\[
\int_{0}^T\int_{\Rd}m_\eps(u_\eps)p_\eps \Delta \vf \,\d x \d t = \int_{0}^T\int_{\Rd}(u_\eps)_+ p_\eps\Delta \vf
\,\d x \d t - \eps \int_{0}^T\int_{\Rd}\nabla p_\eps \cdot \nabla \vf \,\d x \d t = I_1^{\eps}+ I_2^{\eps}.
\]
The second term $I_2^{\eps}$ tends to zero when $\eps\searrow 0$. In fact, for a constant $C$ that depends on $\vf$, we have, thanks to the Schwarz inequality,
\[
\vert I_2^{\eps}\vert \le \eps\|\nabla p_\eps\|_{L^2(\Rd\times (0,T))}\|\nabla \vf\|_{L^2(\Rd\times (0,T)}
\le \eps C\int_{\Rd}\vert \Lsm u_0\vert^2 \d x,
\]
thanks to \eqref{eq:second_est}. The term $I_1^\eps$ tends
to the expected limit since we have that, for any compact $K\subset \Rd$,
\[
u_\eps \to u \,\,\,\,\hbox{ strongly in } L^2(0,T;H^{1+s-\delta}(K)), \,\,\,\,\forall \delta>0,
\]
and
\[
p_\eps \to p\,\,\,\,\,\,\,\,\hbox{ weakly in } L^2(0,T;L^2(K)).
\]
Thus,
\[
\lim_{\eps\to 0}\int_{0}^T\int_{\Rd}m_\eps(u_\eps)p_\eps \Delta \vf \,\d x \d t= \int_{0}^T\int_{\Rd}
(u)_+ p \Delta \vf\,\d x \d t,
\]
Moreover, since for any compact $K\subset \Rd$, we also have
\[
\nabla u_\eps \to \nabla u\,\,\,\,\hbox{ strongly in } L^2(0,T;H^{s-\delta}(K)), \,\,\,\,\,\forall \delta>0,
\]
we get that
\[
\lim_{\eps \to 0}\int_{0}^T\int_{\Rd}p_\eps\mathscr{H}(u_\eps)\nabla u_\eps\cdot\nabla \vf\,\d x \d t
=\int_{0}^T\int_{\Rd\cap\left\{u\ge 0 \right\} }p\nabla u\cdot\nabla \vf\,\d x \d t.
\]

%%%
As a result, we have that $u$ verifies, for any $\vf\in C^{\infty}_c(\Rd\times[0,T))$,
\begin{align}
\label{eq:weak_u_positive}
\int_{0}^{T}\int_{\Rd} u\partial_t \vf \d x \d t &- \int_{0}^T\int_{\Rd}(u)^+  p \Delta \vf \,\d x \d t
-\int_{0}^T\int_{\Rd\cap\left\{u\ge 0 \right\} }p\nabla u\cdot\nabla \vf\,\d x \d t\nonumber\\
&= - \int_{\Rd} u_{0} \vf(x,0) \d x\,\,\,\,\,\,\hbox{ and }\,\,\,\,\,
p = \Ls u\,\,\,\hbox{ a.e. in } \Rd\times(0,T).
\end{align}
Moreover,
we have that by passing to the limit in \eqref{eq:first_est} we obtain \eqref{eq:entropy_est} thanks to semincontinuity.
Finally, since \eqref{eq:second_est} implies that $\xi_\eps:=m_\eps^{1/2}(u_\eps)\nabla p_\eps$ is bounded uniformly
in $L^2(0,T;L^2(\Rd))$ we have that there exists a vector field $\xi\in L^2(0,T;L^2(\Rd))$ to which $\xi_\eps$ weakly converges and such that
\eqref{eq:energy_est} holds.

\medskip

{Thus, it remains to identify $\xi$ as in \eqref{eq:def_xi}.
To this purpose, we introduce the vector field $\zeta_\eps:= m_\eps(u_\eps) \nabla p_\eps = m_\eps(u_{\eps})^{1/2}\xi_\eps$
and we note that, on the one hand, $\zeta_\eps$ weakly converges to some $\zeta\in L^2(0,T;L^2(\Rd))$ and $\zeta=u^{1/2}\xi$.
On the other hand, we have that, since $\zeta_\eps=\nabla(p_\eps m_\eps(u_\eps)) - p_\eps
\mathcal{H}(u_\eps)\nabla u_\eps$,
\begin{equation}
\label{eq:id_distr}
\zeta_\eps\xrightarrow{\eps \to 0}\nabla(pu) - p\nabla u\,\,\,\,\,\hbox{ in the sense of distributions on }\Rd\times (0,\infty).
\end{equation}
Thus,
\begin{equation}
\label{eq:implicit_xi}
\zeta = \nabla(pu) - p\nabla u = u^{1/2}\xi \,\,\,\,\,\hbox{ a.e. in }\Rd\times (0,+\infty).
\end{equation}
In particular, for those points in which $u>0$ we can express $\xi$ in terms of $\zeta$ as $\xi = u^{-1/2}\zeta$
}

In order to prove that $u$ is indeed a solution of \eqref{eq:weak_formulation} it remains to
show that $u\ge 0$ almost every where in $\Rd\times (0,T)$. \\
Note that since (cf.\eqref{eq:est_time_der})
\begin{equation}
\label{eq:uteps}
\|\partial_t u_\eps\|_{L^2(0,T;(W^{1, \frac{2p_s}{1+p_s}}_{loc}(\Rd))^{'})} \le C, \,\,\,\,\hbox{ uniformly in } \eps>0,
\end{equation}
we get, by semicontinuity of norms, that $\forall T>0$,
\begin{equation}
\label{eq:ut}
\|\partial_t u\|_{L^2(0,T;W^{1, \frac{2p_s}{1+p_s}}_{loc}(\Rd))^{'})} \le C.
\end{equation}
Moreover, this estimate is also uniform with respect to time and thus
\begin{equation}
\label{eq:est_ut}
\|\partial_t u\|_{L^2(0,+\infty;W^{1, \frac{2p_s}{1+p_s}}_{loc}(\Rd))^{'})} \le C.
\end{equation}

%%%%%%%%%%%%%%%%%%%%%%%%%%%%%%%%%%%%%%%%%%%%%%%%%%%%%%%%%%%%%%%%%%%%%%%%
\subsubsection{Nonnegativity}

To prove positivity we exploit the entropy estimate \eqref{eq:first_est}. More precisely, the positivity
of $u$ follows from the fact that
\begin{equation}
\label{eq:entropy11}
\sup_{\eps>0} \sup_{t\in [0,T]}\int_{\Rd}f_\eps(u_\eps(t)) \d x  < +\infty
\end{equation}
combined with \eqref{eq:y<0}. We aim at proving that $u\ge 0$ for almost any $(x,t)\in \mathbb{R}^d\times (0, T).$
To this end, we fix $t\in (0,T)$, a compact subset $K$ of $\Rd$ and we assume, by contradiction,
that the set
\[
\left\{ x\in \Rd\cap K: u(x,t) <0\right\}
\]
has positive measure. Since
\[
\left\{ x\in \Rd\cap K: u(x,t) <0\right\} = \bigcup_{n=1}^{+\infty}\left\{x\in \Rd\cap K: u(x,t) < -\frac{1}{n} \right\},
\]
this implies that
that for some fixed $\lambda>0$
the set
\[
N_\lambda :=\left\{x\in \Rd\cap K: u(x,t) \le -\lambda \right\}
\]
has positive Lebesque measure.
Now, since the sequence $u_\eps(\cdot, t)\xrightarrow{\eps\to 0} u(\cdot, t)$ almost everywhere in $K$, the Severini-Egorov Theorem
furnishes that for any $\eta>0$ there exists a measurable set $G_\eta\subset K$ such that $\vert K\setminus G_\eta\vert \le \eta$
and such that
\[
u_\eps(\cdot, t) \xrightarrow{\eps\to 0}u(\cdot,t) \,\,\,\,\hbox{ uniformly on } G_\eta.
\]
We fix $\eta$ and we find $\bar{\eps}>0$ such that if $\eps<\bar{\eps}$ there holds
\[
u_\eps(\cdot, t) \le -\frac{\lambda}{2}\,\,\,\,\,\hbox{ on } G_\eta\cap N_\lambda.
\]
On $G_\eta\cap N_\lambda$ we have (recall \eqref{eq:y<0})
\begin{align*}
f_\eps(u_\eps(x,t)) &=  \int_{u_\eps(x,t)}^1 g(w) \d w = \int_{u_\eps(x,t)}^{-\lambda/2} g(w) \d w
 + \int_{-\lambda/2}^1 g(w) \d w\nonumber\\
&\ge \int_{u_\eps(x,t)}^{-\lambda/2} g(w) \d w = f_\eps(-\lambda/2)\ge \frac{\lambda^2}{8\eps^2}.
\end{align*}
Thus, thanks to Fatou Lemma we get
\[
\liminf_{\eps\to 0}\int_{\Rd}f_\eps(u_\eps(x,t))\d x \ge \liminf_{\eps\to 0}\int_{G_\eta\cap N_\lambda}f_\eps(u_\eps(x,t))\d x
= +\infty.
\]
This is in contradiction with \eqref{eq:entropy11}, which would imply that for all $t\ge 0$
\[
\limsup_{\eps \to 0}\int_{\Rd}f_\eps(u_\eps(x,t))\d x <+\infty.
\]
Hence the positivity is proved. As a consequence we have that $u$ is a solution of \eqref{eq:weak_formulation}.

\subsubsection{Conservation of mass}
\label{ss:cons_mass}
We take a smooth cut-off function $g:[0,+\infty)\to [0,1]$
such that
\begin{equation}
\label{eq:cut_off}
\begin{cases}
g(r) = 1\,\,\,\hbox{ in } [0,1]\\
g(r) = 0\,\,\,\hbox{ in } [2, +\infty).
\end{cases}
\end{equation}
and such that
\[
\|g'\|_{L^\infty(\R)}+ \|g''\|_{L^\infty(\R)}\le 2.
\]
Then, for $R>0$ we set $\phi_{R}(x) := g\big(\frac{\vert x\vert}{R}\big)$.
For any $h>0$ and $t\in [0, T)$ such that $t+h<T$ we also introduce
\begin{equation}
\label{eq:zeta}
\zeta_{h,t}(r):=
\begin{cases}
1, \,\,\,\,\,0\le r\le t\\
1- \frac{r-t}{h},\,\,\,\,t\le r\le t+h \\
0, \,\,\,\,t+h\le r\le T.
\end{cases}
\end{equation}
There holds that for any $t\in [0,T)$ $\zeta_{h,t}(\cdot)\xrightarrow{h\to 0}\chi_{[0,t]}(\cdot)$.
%Let $\psi\in C^{\infty}_{c}(0,T)$.
We choose in the weak formulation \eqref{def:weak} the test function
\[
\varphi_{h,t,R}(x,r):=\zeta_{h,t}(r)\phi_{R}(x), \,\,\,\,\,\hbox{ for } (x,r)\in \Rd\times [0,T)
\]
and we obtain
\begin{align}
\label{eq:weak_mass}
-\int_{0}^T\int_{\Rd}u \partial_r \varphi_{h,t,R} \d x\d r &= \int_{\Rd\times (0,T)} up \Delta \varphi_{h,t,R}\d x \d r
+\int_{\Rd\times (0,T)}p\nabla u\cdot\nabla \varphi_{h,t,R} \d x \d r\nonumber\\
 &\int_{\Rd}u_0(x)\phi_{R}(x)\d x
\end{align}
Since $u, \nabla u$ and $p$ are, at least $L^1_{loc}(\Rd\times (0,T))$ functions
we have
\[
\lim_{h\to 0}\int_{0}^T\int_{\Rd}u \partial_r \varphi_{h,t,R} \d x\d r = -\int_{\Rd}u(x,t)\phi_{R}(x)\d x\,\,\,\,\hbox{ for a.a. } t\in (0,T)\\
\]
and
\begin{align*}
\lim_{h\to 0}\Big(\int_{\Rd\times (0,T)} up \Delta \varphi_{h,t,R}\d x \d r
+\int_{\Rd\times (0,T)}p\nabla u\cdot\nabla \varphi_{h,t,R} \d x \d r\Big)\\
=\int_{\Rd\times (0,t)} up \Delta \phi_{R}\d x \d r
+\int_{\Rd\times (0,t)}p\nabla u\cdot\nabla \phi_{R} \d x \d r
\end{align*}
Now, since $\nabla \phi_R(x) =\frac{1}{R} g'\big(\frac{\vert x\vert}{R}\big)\frac{x}{\vert x\vert} $
and $\Delta \phi_R(x) = \frac{1}{R^2}g''\big(\frac{\vert x\vert}{R}\big) + \frac{d-1}{R \vert x\vert}g'\big(\frac{\vert x\vert}{R}\big)
$ we get
\[
\vert \nabla \phi_R(x)\vert \le \frac{2}{R}, \,\,\,\,\,\,\vert \Delta \phi_R(x)\vert \le \frac{C}{R^2}.
\]
Thus, since, for any compact $K\subset \Rd$
there holds that $u\in L^2(0,T; L^2(K))$, $\nabla u\in L^2(0,T; L^2(K))$ and $p\in  L^2(0,T; L^2(K))$, we have that
\begin{equation}
\label{eq:mass_cons2}
\lim_{R\to +\infty}\abs{\int_{0}^t \int_{\Rd}u\, p \Delta \phi_R (x) \d x \d r +\int_{0}^t\int_{\Rd}p\nabla u\cdot\nabla \phi_R(x) \d x \d r}=0.
\end{equation}
Moreover, as $u\ge 0$ in $\Rd\times (0,T)$, the monotone convergence Theorem gives that, for almost any $t\in (0,T)$,
\[
\lim_{R\to +\infty} \int_{\Rd}u(x,t) \phi_R(x) \d x = \int_{\Rd}u(x,t) \d x.
\]
Consequently, since (recall that $u_0\in L^1(\Rd)$)
\[
\lim_{R\to \infty}\int_{\Rd}u_0\phi_R\d x = \int_{\Rd} u_0\d x <+\infty,
\]
by passing to the limit $R\to +\infty$ in \eqref{eq:weak_mass} we obtain, for almost any $t<T$
\[
\int_{\Rd}u(x,t) \d x  = \int_{\Rd} u_0\d x <+\infty,
\]
that gives the desired conservation of mass.

\subsubsection{First Moments estimate}
\begin{lemma}
\label{lem:first_mom}
Let $u$ be a weak solution as constructed before.
Then,
\begin{equation}
\label{eq:first_moment}
\int_{\Rd}\vert x\vert u(x,t)\d x \le \int_{\Rd}\vert x\vert u_0(x)\d x + C(T),\,\,\,\hbox{ for a.a. }t\le T.
\end{equation}
\end{lemma}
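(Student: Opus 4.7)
The plan is to test the weak formulation \eqref{eq:weak_formulation} with an approximation of $|x|$ that is smooth at the origin and compactly supported. I set $\psi(x):=(1+|x|^2)^{1/2}$, which is smooth on $\Rd$, satisfies $|x|\le\psi(x)\le 1+|x|$, and obeys $|\nabla\psi|\le 1$ pointwise. With $\phi_R$ and $\zeta_{h,t}$ the spatial and temporal cutoffs introduced in Subsection \ref{ss:cons_mass}, the test function will be $\vf(x,r):=\psi(x)\phi_R(x)\zeta_{h,t}(r)$.

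The main conceptual simplification is to rewrite the weak formulation using the vector field $\xi$ provided by \eqref{eq:def_xi}. Integration by parts on compactly supported test functions, together with the identification $u\nabla p=u^{1/2}\xi$ coming from \eqref{eq:def_xi}, yields
\[
\iint_{Q}p\nabla u\cdot\nabla\vf\,\d x\,\d r + \iint_{Q} pu\Delta\vf\,\d x\,\d r = -\iint_{Q} u\nabla p\cdot\nabla\vf\,\d x\,\d r = -\iint_{Q} u^{1/2}\xi\cdot\nabla\vf\,\d x\,\d r,
\]
so that \eqref{eq:weak_formulation} is equivalent to the compact form
\[
-\iint u\,\partial_r\vf\,\d x\,\d r + \iint u^{1/2}\xi\cdot\nabla\vf\,\d x\,\d r = \int_{\Rd} u_0\,\vf(x,0)\,\d x.
\]
Passing $h\to 0$ exactly as in Subsection \ref{ss:cons_mass} gives, for almost every $t\in(0,T)$,
\[
\int_{\Rd} u(t)\psi\phi_R\,\d x - \int_{\Rd} u_0\psi\phi_R\,\d x = -\int_0^t\!\!\int_{\Rd} u^{1/2}\xi\cdot\nabla(\psi\phi_R)\,\d x\,\d r.
\]

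The crucial a priori bound is a uniform-in-$R$ control of the gradient of the cutoff weight: since $|\nabla\psi|\le 1$, $|\nabla\phi_R|\le 2/R$ and $\psi(x)\le 1+2R$ on the support $\{|x|\le 2R\}$ of $\phi_R$, one has $|\nabla(\psi\phi_R)|\le 1+(1+2R)(2/R)\le 7$ for all $R\ge 1$ and all $x\in\Rd$. Cauchy–Schwarz, mass conservation \eqref{eq:mass_cons}, and the energy bound $\|\xi\|_{L^2(Q)}^2\le \tfrac12\|\Lsm u_0\|_{L^2}^2$ from \eqref{eq:energy_est} then give
\[
\left|\int_0^t\!\!\int_{\Rd} u^{1/2}\xi\cdot\nabla(\psi\phi_R)\right| \le 7\Bigl(t\,\|u_0\|_{L^1}\Bigr)^{1/2}\,\|\xi\|_{L^2(Q)} =: C(t),
\]
so that $\int_{\Rd} u(t)\psi\phi_R\,\d x \le \int_{\Rd} u_0\psi\,\d x + C(T)$ uniformly in $R\ge 1$. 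Applying Fatou's lemma on the left-hand side as $R\to\infty$ (legitimate because $u(t)\ge 0$ and $\psi\phi_R\to\psi$ pointwise) and using $|x|\le\psi\le 1+|x|$ together with mass conservation produces the desired estimate \eqref{eq:first_moment}, with constant $C(T)$ depending only on $T$, $\|u_0\|_{L^1}$ and $\|\Lsm u_0\|_{L^2}$.

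The main obstacle is the recasting of \eqref{eq:weak_formulation} in terms of $\xi$: one must check that the distributional identification $u\nabla p=u^{1/2}\xi$ is compatible with the integration by parts performed on the compactly supported $\vf$, and that the piecewise linear $\zeta_{h,t}$ (if needed, further mollified to $C^\infty$ in time) is admissible in \eqref{eq:weak_formulation}. Both verifications are routine given the regularity of $u$, $\nabla u$ and $p$ on compact sets established in Subsection \ref{ss:limit_proc}; the remainder is a matter of bookkeeping with the two cutoffs.
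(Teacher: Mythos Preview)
Your proof is correct and takes a genuinely different route from the paper's. The paper tests with $\vf=|x|\phi_R$ and estimates the two terms $\int pu\Delta\vf$ and $\int p\nabla u\cdot\nabla\vf$ separately; this forces the authors to control the singular contribution $(d-1)\int pu\,\phi_R/|x|$ coming from $\Delta|x|$, which they do via Sobolev embeddings $u\in L^{q^*}$, $p\in L^{r^*}$ and the integrability of $|x|^{-1}$ on $B_1$. Your key observation is that, thanks to \eqref{eq:def_xi}, these two terms combine exactly into $-\int u^{1/2}\xi\cdot\nabla\vf$, so only a pointwise bound on $\nabla\vf$ is needed and no Laplacian ever appears. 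Coupled with the smooth weight $\psi=(1+|x|^2)^{1/2}$, which removes the origin singularity the paper has to ``round off'', your argument is shorter, yields an explicit constant depending only on $T$, $\|u_0\|_{L^1}$ and $\|\Lsm u_0\|_{L^2}$, and is dimension-independent (the paper's Sobolev-exponent bookkeeping is delicate in low dimension). The only point worth spelling out more carefully is the integration by parts $\int pu\,\Delta\vf=-\int\nabla(pu)\cdot\nabla\vf$: it is justified because \eqref{eq:def_xi} gives $\nabla(up)=p\nabla u+u^{1/2}\xi\in L^1_{\mathrm{loc}}$, so $up\in W^{1,1}_{\mathrm{loc}}$, but it would be good to make this explicit.
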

An interesting feature of this estimate is that it works for all weak solutions constructed in Theorem \ref{th:existence}.

\begin{proof}[Proof of Lemma \ref{lem:first_mom}]
First of all we notice that
weak solutions verify that
for every $\varphi\in C^{2}_{c}(\Rd)$
\begin{align}
\label{eq:weak}
\int_{\Rd} u(x,t)\vf \d x = \int_{\Rd} u_0(x)\vf \d x
+ \int_{0}^{t}\int_{\Rd}p u \Delta \vf \,\d x \d r
+\int_{0}^{t}\int_{\Rd} p\nabla  u \cdot\nabla \vf \,\d x \d r
\nonumber\\
p = \Ls u\,\,\,\hbox{ a.e. in } \Rd\times(0,+\infty).
\end{align}
We take as a test function in \eqref{eq:weak} the function $\phi (x) = \vert x\vert \phi_R(x)$ where $\phi_R$ is a smooth cut-off function (see \eqref{eq:cut_off} in Subsection \ref{ss:cons_mass}) such that
\[
\vert \nabla \phi_R(x)\vert \le \frac{2}{R}, \,\,\,\,\,\,\vert \Delta \phi_R(x)\vert \le \frac{C}{R^2}.
\]
To be precise, to obtain a smooth $\phi$ one should also round off the function $\vert x\vert$ around the origin. The proof is analogous and for the sake of simplicity we use $\vf = \vert x\vert \phi_R$.

We have
\[
\nabla \vf = \vert x\vert \nabla \phi_R + \frac{x}{\vert x\vert}\phi_R,
\]
and
\[
\Delta \vf = \vert x\vert \Delta\phi_R + 2  \frac{x}{\vert x\vert}\cdot \nabla\phi_R + \frac{d-1}{\vert x\vert}\phi_R.
\]
Thus,
\[
\int_{0}^{t}\int_{\Rd} p\nabla  u \cdot\nabla \vf \,\d x \d r =
\int_{0}^{t}\int_{\Rd} p\nabla  u \cdot \frac{x}{\vert x\vert}\phi_R\,\d x \d r
+ \int_{0}^{t}\int_{\Rd} \vert x\vert p\nabla  u\cdot \nabla \phi_R \,\d x \d r
\]
Due to the regularity of the weak solution, the first integral is clearly bounded by a constant that depends on the final time $T$.
Being $\nabla \phi_R$ supported on $R\le \vert x\vert\le 2R$, thanks to
$\vert\nabla \phi_R\vert \le C/R$, we have that also the second integral is bounded by a constant possibly depending on the final time $T$.

We bound the second integral in the right hand side of \eqref{eq:weak}.
We have
\begin{align*}
\int_{0}^{t}\int_{\Rd}p u \Delta \vf \,\d x \d r &=
\int_{0}^{t}\int_{\Rd}p u \vert x\vert \Delta \phi_R\,\d x \d r + 2\int_{0}^{t}\int_{\Rd}p u \frac{x\cdot \nabla \phi_R}{\vert x\vert}\,\d x \d r\\
&+  (d-1)\int_{0}^{t}\int_{\Rd}p u \frac{\phi_R}{\vert x\vert}\,\d x\d r.
\end{align*}
The first two integrals are easily bounded using the regularity of the weak solutions and the properties of $\nabla\phi_R$ and of $\Delta \phi_R$.
Regarding the third integral we can reason as follows.
First, we write ($R>1$)
\[
(d-1)\int_{0}^{t}\int_{\Rd}p u \frac{\phi_R}{\vert x\vert}\,\d x\d r =
(d-1)\int_{0}^{t}\int_{B_1(0)}p u \frac{\phi_R}{\vert x\vert}\,\d x\d r
\]
\[
+ (d-1) \int_{0}^{t}\int_{\Rd\setminus B_1(0)}p u \frac{\phi_R}{\vert x\vert}\,\d x\d r.
\]
Now, for $\vert x\vert >1$ we have that $\frac{\phi_R}{\vert x\vert}\le 1$ and thus, the second integral is bounded using that $up\in L^1(\Rd\times (0,T))$.
Regarding the first integral we first note that $u\in L^2(0,T; L^{q^*}(\Rd))$ and that
$p\in L^2(0,T;L^{r^*}(\Rd))$ due to Sobolev inequality.
More precisely,
\[
q^* = \frac{2d}{d-2-2s},\,\,\,\,r^* = \frac{2d}{d-2+ 2s}.
\]
Thus, defining $q\ge 1$ in such a way that
\[
\frac{1}{q} = 1 - \frac{1}{q^*} -\frac{1}{r^*},
\]
we have that $q<d$ and thus $\vert x\vert^{-1} \in L^{q}(B_1(0))$.
Therefore, the Young inequality shows that
\[
\int_{0}^{t}\int_{\Rd\setminus B_1(0)}p u \frac{\phi_R}{\vert x\vert}\,\d x\d r \le C(T).
\]
Collecting all the above estimates we have
\eqref{eq:first_moment}.
\end{proof}

%%%%%%%%%%%%%%%%%%%%%%%%%%%%%%%%%%%%%%%%%%%%%%%%%%%%%%%%%%%%%%%%%%%%%%%%%%%%%%

\section{\bf Self-similar Solutions}
\label{sec:self_similar}

In this Section we construct self-similar weak solutions  of System \eqref{eq:prob_intro1} (in the sense of Definition \ref{def:weak}).  More precisely,  we look for solutions of the form
\begin{equation}
\label{eq:source_type_form}
u(x,t) = \frac{1}{(1+t)^\alpha}v\Big(\frac{x}{(1+t)^\beta}, \log (1+t)\Big),
\end{equation}
where the profile function $v:\mathbb{R}^d\times \mathbb{R}\to \mathbb{R}$
is to be appropriately determined and the parameters $\alpha$ and $\beta$ are given by
\begin{equation}
\label{eq:alpha/beta}
\alpha = \frac{d}{d  + 2(1+ s)}, \,\,\,\,\beta = \frac{1}{d+ 2(1+ s)},
\end{equation}
due to the constraints that we will find below. In what follows, we will set
$$
y:=\frac{x}{(1+t)^\beta}, \quad \tau:=\log (1+t), \quad w = \Ls v.
$$
It is interesting to observe that the profile function $v$ will have compact support. Hence,
the self-similar solutions will have compact support as well (in the space variable).
As it is now customary (see \cite{Ca_To2000} and \cite{Vaz2007} and references therein),
the self-similar solutions of \eqref{eq:prob_intro1} are related to stationary solutions of a nonlinear (and nonlocal in this case) Fokker-Planck type equation solved by the profile $v$. Thus, as a first step, we look for an equation to be satisfied by $v$.
Clearly, since $v$ is related to a weak solution $u$ by the relation \eqref{eq:source_type_form}, it has the very same (low) regularity. Thus, the following computations are only formal at this moment. Therefore, assuming all the regularity needed to justify the computations, we have
\begin{eqnarray*}
\label{eq:partialt}
\partial_t u = -\alpha (1+t)^{-\alpha-1}v -\beta (1+t)^{-\alpha-1} \nabla v\cdot y + (1+ t)^{-\alpha-1}\partial_\tau v.\\
p = \Ls u = \frac{1}{(1+t)^{\alpha}}\Big(\Ls v \Big)\Big(\frac{x}{(1+t)^\beta}\Big) (1+t)^{-2s\beta}
 = (1+t)^{-\alpha - 2s\beta} w.
\end{eqnarray*}
Moreover,
\begin{align}
\Delta p &= (1+t)^{-\alpha -2s\beta - 2 \beta} \Delta_y w\nonumber\\
\nabla u &= (1+t)^{-\alpha -\beta}\nabla_y v,\nonumber\\
\nabla p &= (1+t)^{-\alpha-\beta-2s\beta}\nabla_y w.
\end{align}
Thus, the problem
\[
\begin{cases}
\partial_t u - \div(u\nabla p) = \partial_t u - \nabla u\cdot \nabla p - u\Delta p = 0,\,\,&\hbox{ in } \mathbb{R}^d\times(0,+\infty)\\
p = \Ls u \,\,&\hbox{ in } \mathbb{R}^d\times(0,+\infty)
\end{cases}
\]
rewrites as
\begin{align}
(1+t)^{-\alpha-1}\partial_{\tau}v-\alpha (1+t)^{-\alpha-1}v -\beta (1+t)^{-\alpha-1} \nabla_y v\cdot y\nonumber\\
-(1+t)^{-2\alpha-2\beta-2s\beta}\nabla_y v\cdot\nabla_y w -(1+ t)^{-2\alpha-2\beta-2s\beta}v \Delta_y w = 0,\\
w = \Ls v.
\end{align}
Now, the choice made above for $\alpha$ and $\beta$ implies the algebraic relation
\begin{equation}
\label{eq:cond_alphabeta}
\alpha + 2\beta(1+s) = 1,
\end{equation}
that  allows us to eliminate the time factors in the above equation. We thus  obtain an expression involving only the rescaled variables $\tau$ and $y$. Namely,
\begin{equation}
\label{eq:v}
\begin{cases}
\partial_\tau v-\alpha v -\beta \nabla_y v\cdot y
-\nabla_y v\cdot\nabla_y w - v \Delta_y w = 0\,\,&\hbox{ in } \mathbb{R}^d\times(0,+\infty)\\
w = \Ls v\,\,\,&\hbox{ in } \Rd\times (0,+\infty).
\end{cases}
\end{equation}
Moreover, since also impose a second relation $\alpha=\beta d$, equation \eqref{eq:v} can be written in divergence form, so that conservation of mass will be guaranteed (at this stage only formally). More precisely, the system contains the following \sl nonlinear and nonlocal Fokker-Planck type equation:\rm
\begin{equation}
\label{eq:fokker-cahn}
\begin{cases}
\partial_\tau v -\div_{y}\Big(v\big( \nabla_y w + \beta y\big)\Big)= 0,\\
w = \Ls v.
\end{cases}
\end{equation}

%%%%%%%%%%%%%%%%%%%%%%%%%%%%%%%%%%%%%%%%%%%%%%%%%%%%%%%%%%%%%%%%%%%%%%%%%%%%%
\subsection{The structure of the stationary solutions}
\label{ssec:stationary_sol}
\

Self-similar solutions are thus related to stationary solutions of \eqref{eq:fokker-cahn}.
Therefore, we first analyse the structure of the stationary solutions.

\medskip

(i) First of all, we make a reduction in the set of possible solutions and concentrate on those stationary solutions of \eqref{eq:fokker-cahn} such that
\begin{equation}
\label{eq:stationary2}
\begin{cases}
v\nabla_y\big( w + \frac{\beta}{2} \vert y\vert^2\big) = 0\,\,\,\,&\hbox{ in } \,\,\Rd,\\
w = \Ls v\,\,\,&\hbox{ in } \Rd.
\end{cases}
\end{equation}
As in the parallel study made in \cite{CVlarge} for negative values of $s$, this reduction must be justified by the later analysis of the long-time behavior and the asymptotic convergence to a self-similar profile.
%This is a posteriori justified if we look for radial symmetric stationary solutions.

(ii) Assuming for the moment that we have continuous solutions,\footnote{This assumption will be justified  a posteriori in our analysis.} if we denote by $\mathscr{P}$ the positivity set of $v$, i.e. the set
\begin{equation}
\label{eq:positivityset_v}
\mathscr{P}:=\left\{y\in \Rd: v(y)>0\right\},
\end{equation}
then we have
\begin{equation}
\label{eq:stat3}
\begin{cases}
\nabla\big(w +\frac{\beta}{2}\vert y\vert^2\big) =0\,\,\,\,&\hbox{ in } \mathscr{P},\\
v\ge 0, \quad w = \Ls v \quad &\hbox{ in } \Rd.
\end{cases}
\end{equation}
Thus, on the every connected component $\mathscr{C}_i$ of $\mathscr{P}=\bigcup_{i\in \mathscr{N}}\mathscr{C}_i$, there exists a constant $c_i$  such that
\begin{equation*}
\label{eq:stat41}
\begin{cases}
\Ls v = c_i - \frac{\beta}{2}\vert y\vert^2\,\,\,&\hbox{ in }\mathscr{C}_i\\
v = 0 \,\,\,&\hbox{ in }\Rd\setminus \mathscr{P}.
\end{cases}
\end{equation*}
The above problem can be rewritten as
\begin{equation}
\label{eq:stat4}
\begin{cases}
\Ls v = \sum_{i\in \mathscr{N}}c_i\chi_i(x) - \frac{\beta}{2}\vert y\vert^2\,\,\,&\hbox{ in }\mathscr{P},\\
v = 0 \,\,\,\,&\hbox{ in }\Rd\setminus \mathscr{P},
\end{cases}
\end{equation}
($\chi_i$ is the characteristic function of $\mathscr{C}_i$).
Necessarily, the constants $c_i$ cannot be all negative, otherwise $\mathscr{P}=\emptyset$ thanks to the maximum principle. Note that since the operator is nonlocal we do not claim  positivity of all constants. In any case, this fact will not be important for our results.

(iii) Now we restrict to look at continuous solutions for which $\mathscr{P}$ is connected.
Let us denote with $v_1$ a solution of \eqref{eq:stat4}, namely a particular solution of \eqref{eq:stat3}.
In this case, problem \eqref{eq:stat4} becomes
\begin{equation}
\label{eq:stat4bis}
\begin{cases}
\Ls v = C - \frac{\beta}{2}\vert y\vert^2\,\,\,&\hbox{ in }\mathscr{P}\\
v = 0 \,\,\,\,&\hbox{ in }\Rd\setminus \mathscr{P}
\end{cases}
\end{equation}
($C\in (0,+\infty)$). By construction, $v_1$ is strictly positive on $\mathscr{P}$.
 Beside $v_1$, let us denote  with $v_2$ a  solution of the problem
 \begin{equation}
 \label{eq:stat5}
 \begin{cases}
 \Ls v_2 = 1\,\,&\hbox{ in }\mathscr{P}\\
 v_2 = 0\,\,\,&\hbox{ in }\Rd\setminus \mathscr{P}.
 \end{cases}
 \end{equation}
 It is {necessarily positive, thanks to the maximum principle \cite[Theorem 1.8]{disoaval}}. Therefore, by linearity the (continuous) solutions of \eqref{eq:stat3} for which the positivity set $\mathscr{P}$ is connected have the form
\begin{equation}
\label{eq:general_sol}
v = v_1 + K v_2,\quad K\in \mathbb{R}.
\end{equation}
In fact, let $v_1$ denote a particular solution of the linear equation
\[
\begin{cases}
\nabla (\Ls v) = -\beta y \,\,&\hbox{ in }\mathscr{P}\\
v = 0\,\,\,\,&\hbox{ in } \Rd\setminus \mathscr{P}.
\end{cases}
\]
Then, all the solutions to the equation \eqref{eq:stat3} are given by \eqref{eq:general_sol} provided $v_2$ solves
the homogeneous problem
\[
\begin{cases}
\nabla (\Ls v) = 0\,\,&\hbox{ in }\mathscr{P}\\
v = 0\,\,\,\,&\hbox{ in } \Rd\setminus \mathscr{P},
\end{cases}
\]
which corresponds to \eqref{eq:stat5}.

We will relate the $v_1$ component of the solution
 \eqref{eq:general_sol} to an obstacle problem
for which we prove existence and uniqueness of a smooth ($C^{1,\alpha}$, $\alpha\in (0,s)$, according to the obstacle problem regularity theory), radially decreasing solution.
In such a way we construct a kind of minimal energy solution.
As a consequence, we will conclude that the positivity set $\mathscr{P}$ of $v$ is a ball.

Following the analysis, the $v_2$ component of $v$ in the decomposition \eqref{eq:general_sol} is a kind of correction of $v_1$. The function $v_2$ solves \eqref{eq:stat5} in a ball, and is explicitly obtained as a rescaling of the solution ({the subscript $G$ refers to Getoor})
\begin{equation}
\label{eq:getoor_sol}
v_G(y) =\frac{1}{\kappa_{s,d}}(1 - \vert y\vert^2)_{+}^{s}
\end{equation}
 given in \cite[Th. 5.2]{Getoor}, where $\kappa_{s,d}:=2^{2s}\Gamma(s+2)\Gamma(s+ \frac{d}{2})\Gamma(\frac{d}{2})^{-1}$ and the ball is $B_1(0)$.
Note that this solution is $C^{0,s}$. More importantly (see \eqref{eq:getbad} below, this component does not satisfy the regularity assumptions of the solutions introduced in Section \ref{sec:existence}, and will be disregarded in our study, though we do not claim that they cannot be useful in other contexts.

%%%%%%%%%%%%%%%%%%%%%%%%%%%%%%%%%%%%%%%%%%%%%%%%%%%%%%
\subsection{\bf The obstacle problem}

\normalcolor

 We introduce the following energy
\begin{equation}
\label{eq:energy_1}
\mathscr{E}(v):= \frac{1}{2}\int_{\Rd}\vert \Lsm v\vert^2 \d y + \int_{\Rd}\Big(\frac{\beta}{2} \vert y\vert^2 -1 \Big)v\d y
\end{equation}
that we minimize in a set of nonnegative functions.
{We chose the constant $1$ in the energy just for notational simplicity.  In the next subsubsections and \ref{sss:explicit} \ref{sss:adj_mass} we will show that the minimizer of \eqref{eq:energy_1}, once properly rescaled, will solve the stationary equation \eqref{eq:stat4bis}}.
Nonnegative minimizers of \eqref{eq:energy_1} exist
and are unique. Indeed we have
\begin{theor}
\label{eq:stationary_obstacle}
Let $\beta>0$ be fixed. Then, there exists a unique solution $v$ of the following constrained minimization problem
\begin{equation}
\label{eq:ob_problem}
\min\left\{\mathscr{E}(v), \,\,v\in \mathscr{K}\right\}
\end{equation}
where
\begin{equation}
\label{eq:K}
\mathscr{K}:=\left\{v\in H^s(\Rd): \Big(\frac{\beta}{2}\vert y\vert^2 - 1\Big)v\in L^1(\Rd),\,v\ge 0 \right\}.
\end{equation}
\end{theor}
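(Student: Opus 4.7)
The class $\mathscr{K}$ is nonempty: $v\equiv 0$ belongs to it with $\mathscr{E}(0)=0$, so $\inf_{\mathscr{K}}\mathscr{E}\le 0$. The plan is to apply the direct method. Pick a minimizing sequence $(v_n)\subset \mathscr{K}$. The key a priori estimate is coercivity: choosing $R=2/\sqrt{\beta}$, so that $\tfrac{\beta}{2}|y|^2-1\ge \tfrac{\beta}{4}|y|^2$ on $\{|y|\ge R\}$, I would split
\begin{equation*}
\mathscr{E}(v_n)\ge \tfrac{1}{2}\|\Lsm v_n\|_{L^2(\Rd)}^2 +\tfrac{\beta}{4}\int_{|y|\ge R}|y|^2 v_n\,\d y -\int_{|y|<R} v_n\,\d y,
\end{equation*}
and bound the last (negative) term via the fractional Sobolev embedding $\|v_n\|_{L^{2d/(d-2s)}(\Rd)}\le C\|\Lsm v_n\|_{L^2(\Rd)}$ (for $d>2s$; in the residual regime $d\le 2s$, i.e.\ $d=1$ with $s\ge 1/2$, use instead the continuous embedding of $H^s$ into bounded or H\"older functions on $B_R$) followed by H\"older on $B_R$ and Young's inequality to absorb a fraction of $\|\Lsm v_n\|_{L^2}^2$. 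This yields uniform bounds on $\|\Lsm v_n\|_{L^2(\Rd)}$, on $\int |y|^2 v_n\,\d y$, and consequently on $\|v_n\|_{L^1(\Rd)}$.

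\textbf{Step 2 (compactness and passage to the limit).} By reflexivity of $H^s(\Rd)$ combined with Rellich compactness on balls, up to a subsequence $v_n\rightharpoonup v$ in $H^s(\Rd)$, $v_n\to v$ in $L^2_{\mathrm{loc}}(\Rd)$ and almost everywhere, and $v\ge 0$ a.e. The Dirichlet term $v\mapsto\|\Lsm v\|_{L^2}^2$ is weakly lower semicontinuous. For the potential, I would split again at the same radius $R$: on $\{|y|\le R\}$ the weight is bounded and $v_n\to v$ in $L^1(B_R)$, so that integral converges; on $\{|y|>R\}$ the weight is non-negative, $v_n\ge 0$, and Fatou's lemma provides the desired liminf inequality. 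Summing, $\mathscr{E}(v)\le\liminf_n \mathscr{E}(v_n)$, and Fatou applied to $\int |y|^2 v_n\,\d y$ ensures $v\in\mathscr{K}$. Thus $v$ is a minimizer.

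\textbf{Step 3 (uniqueness).} The set $\mathscr{K}$ is convex, the potential term in $\mathscr{E}$ is linear in $v$, and the Dirichlet part $v\mapsto\|\Lsm v\|_{L^2}^2$ is strictly convex on $H^s(\Rd)$ because $\Lsm$ has trivial kernel (on the Fourier side, $|\xi|^s\hat v\equiv 0$ forces $v=0$). Hence $\mathscr{E}$ is strictly convex on $\mathscr{K}$, and the standard convex combination argument rules out two distinct minimizers.

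\textbf{Main obstacle.} The delicate point is the coercivity step: the $-\int v\,\d y$ contribution is not controlled by the other two, positive, terms in any direct way, and the constant function $1$ does not lie in the dual of any natural Sobolev space of $v$. The device of splitting the $y$-integral at a radius where the potential turns positive, and invoking the fractional Sobolev embedding on the inside, is what closes the estimate. Once coercivity is in hand, weak compactness together with lower semicontinuity (treating the linear term by Rellich inside and Fatou outside) and strict convexity for uniqueness are standard ingredients.
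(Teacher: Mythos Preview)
Your proof is correct and follows essentially the same approach as the paper: split the potential integral at a radius where the weight becomes nonnegative, control the inner (negative) contribution via the fractional Sobolev embedding, then apply the direct method with Fatou on the outer region for lower semicontinuity, and conclude uniqueness from strict convexity. The only minor differences are cosmetic (the paper takes $R=\sqrt{2/\beta}$ rather than your $R=2/\sqrt{\beta}$, and invokes Hardy--Littlewood--Sobolev rather than phrasing it as a Sobolev embedding); your version is in fact slightly more careful in explicitly handling the low-dimensional case $d\le 2s$ and in justifying the strict convexity via injectivity of $\Lsm$.
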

\begin{proof}

First of all, we prove that the energy is bounded below.
Let us fix $R:=\sqrt{2/\beta}$ and note that
$\int_{\Rd\setminus B_R}\Big(\frac{\beta}{2}\vert y\vert^2-1\Big)v\d y\ge 0$ for any $v\in \mathcal{K}$. Then,
\[
\frac{1}{2}\| v\|^2_{\dot{H^{s}}(\Rd)}
+ \int_{B_R}\Big(\frac{\beta}{2}\vert y\vert^2-1\Big) v \d y.
\le  \mathscr{E}(v)
\]
Therefore, the Hardy-Littlewood-Sobolev inequality (see \cite[Theorem V.1]{stein}) brings
two positive constants $C_1$ and $C_2$ depending on $d,s,\beta$, such that
\begin{equation}
\label{eq:bounded_below}
\mathscr{E}(v) \ge C_1\| v\|^2_{\dot{H^{s}}(\Rd)} -C_2,\,\,\,\forall v\in \mathcal{K}.
\end{equation}
Let $v_n\in \mathscr{K}$ be a minimizing sequence, that is $\mathscr{E}(v_n)\xrightarrow{n\to +\infty}\hbox{inf}_{w\in \mathscr{K}}\mathscr{E}(w)$. We can assume that $v_n$ belongs to
a sublevel of the energy for $n$ sufficiently large. Thus, there exists some $\bar n$ and some $C>0$ such that
\begin{equation}
\label{eq:sublevel}
\mathscr{E}(v_n) \le C,\,\,\,\forall n\ge \bar n.
\end{equation}
Therefore, thanks to \eqref{eq:bounded_below} we have
\begin{equation}
\label{eq:coerc2}
\| v_n\|_{\dot{H^{s}}(\Rd)} \le C \,\,\,\hbox{ and }\Big\vert \int_{\Rd}\Big(\frac{\beta}{2}\vert y\vert^2 -1\Big) v_n(y) \d y \Big\vert \le C,
\end{equation}
and since
%Thus, being
$\dot{H^{s}}(\Rd)$ is compactly embedded in $L^p(K)$
for any compact in $\Rd$ and any $p<p_s:= \frac{2d}{d-2s}$, we have that, up to a subsequence,
\[
v_n \xrightarrow{n\to +\infty}v\,\,\,\,\,\hbox{ in } L^{p}(K),\,\,\,\forall p<p_s:= \frac{2d}{d-2s}.
\]
In particular, $v_n\xrightarrow{n\to +\infty} v$ almost everywhere in $\Rd$. This guarantees that $v\ge 0$. To show that
$\Big(\frac{\beta}{2}\vert y\vert^2 - 1\Big)v\in L^1(\Rd)$, we use Fatou's Lemma.
We let $R = 2/\sqrt{\beta}$ and use the decomposition
\[
\int_{\RN}(\frac{\beta}{2}\vert y\vert^2-1) v_n \d y = \int_{\Rd\setminus B_R}
\Big(\frac{\beta}{2}\vert y\vert^2-1\Big) v_n \d y + \int_{B_R}\Big(\frac{\beta}{2}\vert y\vert^2-1\Big) v_n \d y.
\]
Thanks to the strong convergence in $L^p$ on compact sets
we have that
\[
\lim_{n\to\infty}\int_{B_R}\Big(\frac{\beta}{2}\vert y\vert^2-1\Big) v_n \d y  = \int_{B_R}(\frac{\beta}{2}\vert y\vert^2-1) v\, \d y,
\]
and thus $(\frac{\beta}{2}\vert y\vert^2-1) v\in L^1(B_R)$.
For the first integral, we have thanks to Fatou's Lemma (recall that
in $\Rd\setminus B_R$ we have that $\Big(\frac{\beta}{2}\vert y\vert^2-1\Big)v\ge 0$)
\begin{align}
\label{eq:semicont2}
+\infty>\inf_{w\in \mathscr{K}}\mathcal{E}(w)&\ge \liminf_{n\to +\infty}\int_{\Rd}\Big(\frac{\beta}{2}\vert y\vert^2-1\Big) v_n \d y\nonumber\\
& =
\liminf_{n\to +\infty}\Big(\int_{\Rd\setminus B_R}
\Big(\frac{\beta}{2}\vert y\vert^2-1\Big) v_n \d y\Big)
 + \int_{B_R}\Big(\frac{\beta}{2}\vert y\vert^2-1\Big) v \d y\nonumber\\
& \ge \int_{\Rd\setminus B_R}\Big(\frac{\beta}{2}\vert y\vert^2-1\Big) v \d y + \int_{B_R}\Big(\frac{\beta}{2}\vert y\vert^2-1\Big) v \d y.
\end{align}
Thus, we conclude that $(\frac{\beta}{2}\vert y\vert^2-1) v\in L^1(\Rd\setminus B_R)$ and therefore $v\in \mathscr{K}$.

Now, to conclude that $v$ is indeed a minimum for the energy, we observe that the semicontinuity  of the $H^s$ seminorm with respect to the weak convergence and \eqref{eq:semicont2} imply that
 \[
\hbox{inf}_{z\in \mathscr{K}}\mathscr{E}(z) = \liminf_{n\to \infty}\mathscr{E}(v_n) \ge \mathscr{E}(v),
 \]
 namely the minimality of $v$. The uniqueness follows from the strict convexity of the energy.
  \normalcolor
\end{proof}

%%%%%%%%%%%%%%%%%%%%%
In the next Theorem we prove some important properties of the solution of the obstacle problem \eqref{eq:ob_problem}.
To this purpose, we prepare the following
\begin{lemma}
\label{lemma:rearrangement}
For any $v\in \mathscr{K}$ there holds
\begin{equation}
\label{eq:energy_dec}
\mathscr{E}(v^*) \le \mathscr{E}(v),
\end{equation}
where $v^*$ is the symmetric decreasing rearrangement of $v$.
\end{lemma}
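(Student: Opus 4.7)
The plan is to split the energy $\mathscr{E}$ into three pieces and show that each piece either decreases or is preserved under the symmetric decreasing rearrangement $v\mapsto v^*$.

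\textbf{Piece 1: The fractional kinetic term.} Using the standard equivalence (up to a positive constant $C(d,s)$) between the $\Lsm$-norm and the Gagliardo seminorm,
\begin{equation*}
\int_{\Rd}|\Lsm v|^2\,\d y \;=\; C(d,s)\iint_{\Rd\times\Rd}\frac{|v(x)-v(y)|^2}{|x-y|^{d+2s}}\,\d x\,\d y,
\end{equation*}
I would invoke the fractional Pólya--Szegő inequality
\begin{equation*}
\iint\frac{|v^*(x)-v^*(y)|^2}{|x-y|^{d+2s}}\,\d x\,\d y \;\le\; \iint\frac{|v(x)-v(y)|^2}{|x-y|^{d+2s}}\,\d x\,\d y,
\end{equation*}
which follows from Riesz's rearrangement inequality applied to the double integral after expanding the numerator (this is the Almgren--Lieb/Frank--Seiringer argument, and is a standard result).

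\textbf{Piece 2: The linear term $-\int v\,\d y$.} Symmetric decreasing rearrangement preserves the distribution function, so in particular $\int_{\Rd}v^*\,\d y=\int_{\Rd}v\,\d y$, and this piece is unchanged.

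\textbf{Piece 3: The quadratic weight $\tfrac{\beta}{2}\int|y|^2 v\,\d y$.} Here I would use the layer-cake representation $v(y)=\int_0^\infty \chi_{\{v>t\}}(y)\,\d t$ and Fubini to write
\begin{equation*}
\int_{\Rd}|y|^2 v\,\d y \;=\; \int_0^\infty\Bigl(\int_{\{v>t\}}|y|^2\,\d y\Bigr)\d t.
\end{equation*}
For each $t$, the set $\{v^*>t\}$ is a ball centered at the origin with the same Lebesgue measure as $\{v>t\}$; since $|y|^2$ is radial and strictly increasing in $|y|$, among all measurable sets of prescribed measure the centered ball minimizes $\int_E|y|^2\,\d y$ (this is the elementary ``bathtub'' fact). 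Hence $\int_{\{v^*>t\}}|y|^2\,\d y \le \int_{\{v>t\}}|y|^2\,\d y$ for every $t>0$, and integrating in $t$ yields $\int|y|^2 v^*\,\d y \le \int|y|^2 v\,\d y$.

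Summing the three contributions gives $\mathscr{E}(v^*)\le \mathscr{E}(v)$. The only genuinely nontrivial input is Piece~1, the fractional Pólya--Szegő inequality; the quadratic-moment inequality of Piece~3 is a one-line bathtub principle, and Piece~2 is automatic. A minor bookkeeping point is to note that $v\in\mathscr{K}$ implies $v^*\in\mathscr{K}$ (membership in $H^s$ follows from Piece~1, nonnegativity is preserved, and the finiteness of $\int(\tfrac{\beta}{2}|y|^2-1)v^*\,\d y$ follows from Pieces 2 and 3 combined with the original hypothesis), so the inequality makes sense as stated.
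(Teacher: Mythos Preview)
Your proof is correct and follows the same overall decomposition as the paper: fractional P\'olya--Szeg\H{o} (citing Almgren--Lieb \cite{almglieb}) for the kinetic term, equimeasurability for the mass term, and a second-moment inequality for the weighted term. The only difference is in how you handle Piece~3: the paper proves $\int|y|^2 v^*\,\d y\le\int|y|^2 v\,\d y$ by applying the Hardy--Littlewood rearrangement inequality $\int gv\le\int g^*v^*$ with the auxiliary function $g(y)=(R^2-|y|^2)_+$ for compactly supported $v$, and then passes to general $v$ by a truncation-and-Fatou argument, whereas you use the layer-cake representation together with the bathtub principle directly. Your route is slightly more economical since it avoids the compact-support reduction and approximation step; the paper's route has the advantage of quoting a named inequality (Theorem~3.4 in \cite{lieb_loss}) off the shelf. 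Both are standard and equally valid.
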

\begin{proof}
First of all, we observe that the symmetric decreasing rearrangement
reduces the Gagliardo seminorm (see \cite{almglieb}) and thus
\[
\int_{\Rd}\vert \Lsm v^*\vert^2 \d y \le \int_{\Rd}\vert \Lsm v\vert^2 \d y.
\]
Hence, to obtain \eqref{eq:energy_dec} we have to prove
that
\[
\int_{\Rd}\Big(\frac{\beta}{2}\vert y\vert^2 -1\Big) v^* \d y \le \int_{\Rd}\Big(\frac{\beta}{2}\vert y\vert^2 -1\Big) v \d y,
\]
which amounts to prove, since $\int_{\Rd}v \d y = \int_{\Rd}v^*\d y$, that
\begin{equation}
\label{eq:dec_mom}
\int_{\Rd} \vert y\vert^2 v^* \d y \le \int_{\Rd} \vert y\vert^2 v \d y.
\end{equation}
As a first step, we prove \eqref{eq:dec_mom} for compactly supported $v$. Thus, we let $B_R$ be a ball that
contains the support of $v$ and we set $g(y):=(R^2 - \vert y\vert^2)_+$. Since we have that
$g^* = g$, Theorem 3.4 in \cite{lieb_loss} gives
\begin{equation}
\int_{\Rd}(R^2 - \vert y\vert^2)v\d y =\int_{\Rd} g v  \d y \le \int_{\Rd}g v^* \d y = \int_{\Rd}(R^2 - \vert y\vert^2) v^* \d y,
\end{equation}
from which we have, being $\int_{\Rd}v \d y = \int_{\Rd}v^*\d y$,
\[
\int_{\Rd} \vert y\vert^2 v^* \d y \le \int_{\Rd} \vert y\vert^2 v \d y.
\]
The general case follows by approximation. In fact, for any $k>0$,
we consider $v_k:=\chi_{B_k(0)}v$ (with $\chi_{B_k(0)}$ we indicate
the characteristic function of $B_k(0)$). The validity of \eqref{eq:dec_mom}
for compactly supported functions gives
\[
\int_{\Rd}\vert y\vert^2 v_k^{*}\d y \le \int_{\Rd}\vert y\vert^2 v_k\d y.
\]
Moreover, since $\vert y\vert^2 v_k \le \vert y\vert^2 v$, the dominated convergence Theorem
gives that the right hand side converges to $\int_{\Rd}\vert y\vert^2 v \d y$. Hence, thanks to the Fatou's Lemma,
we get
\[
\int_{\Rd}\vert y\vert^2 v^{*}\d y \le \int_{\Rd}\vert y\vert^2 v\d y.
\]
\end{proof}
\begin{theor}
\label{th:properties}
For any $\beta>0$, the solution $v_O$ of the obstacle problem \eqref{eq:ob_problem} is smooth, namely $C^{1,\alpha}(\Rd)$ ($\alpha\in (0,s)$),
radial decreasing and with compact support. We denote with $M_1=M_1(d,s,\beta)$ its mass, namely
\begin{equation}
\label{eq:M1}
M_1:=\int_{\Rd}v_O(y)\d y.
\end{equation}
Moreover, $v_O$ satisfies the following Euler-Lagrange equations
\begin{equation}
\label{eq:compatibility}
\begin{cases}
v_O\ge 0, \,\,\,\,\hbox{ and } \,\Ls v_O \ge 1- \frac{\beta}{2}\vert y\vert^2\,\,\,&\hbox{ a.e. in } \Rd,\\
v_O = 0 \,\,\,\,&\hbox{ in } I,\\
v_O>0, \,\,\,\hbox{ and }\,\Ls v_O = 1- \frac\beta 2\vert y\vert^2\,\,\,\,&\hbox{ a.e. in }\Omega,
\end{cases}
\end{equation}
where we have denoted with $I$ the coincidence set $\left\{ y\in \Rd: v_O=0\right\}$ and with $\Omega$ its complement.
By radial symmetry, we have that $\Omega=B_R(0)$ for some $R>0$ and $I=\Rd\setminus B_R(0)$.
\end{theor}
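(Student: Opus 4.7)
My strategy is to combine the rearrangement lemma with the standard variational calculus for obstacle problems, and then appeal to the classical fractional obstacle regularity theory. First, by Theorem \ref{eq:stationary_obstacle} the minimizer $v_O$ is unique; by Lemma \ref{lemma:rearrangement} its symmetric decreasing rearrangement $v_O^\ast$ lies in $\mathscr{K}$ and satisfies $\mathscr{E}(v_O^\ast)\le \mathscr{E}(v_O)$. Uniqueness then forces $v_O=v_O^\ast$ almost everywhere, so $v_O$ is radially symmetric and nonincreasing.

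Next, the Euler--Lagrange conditions in \eqref{eq:compatibility} I would derive by the usual one-sided perturbation: for any $\psi\in\mathscr{K}$ the function $v_O+\varepsilon\psi$ lies in $\mathscr{K}$ for all $\varepsilon\ge 0$, and minimality gives
\[
\int_{\Rd}\Lsm v_O\,\Lsm\psi\,\d y+\int_{\Rd}\Big(\tfrac{\beta}{2}|y|^2-1\Big)\psi\,\d y\ge 0,
\]
which is exactly the distributional inequality $\Ls v_O\ge 1-\tfrac{\beta}{2}|y|^2$. On the open set $\Omega:=\{v_O>0\}$ (open thanks to the continuity given by the regularity step below), any $\psi\in C_c^\infty(\Omega)$ is admissible with either sign of $\varepsilon$ for $|\varepsilon|$ small, since $v_O$ is bounded below by a positive constant on $\mathrm{supp}\,\psi$; this upgrades the inequality to an equality on $\Omega$ and gives the third line of \eqref{eq:compatibility}.

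For the $C^{1,\alpha}$ regularity I would cite the regularity theory for the obstacle problem with the fractional Laplacian (Silvestre; Caffarelli--Salsa--Silvestre): the obstacle is the smooth function $0$, the right-hand side $1-\tfrac{\beta}{2}|y|^2$ is $C^\infty$, so the optimal regularity $C^{1,s}$ holds and in particular $v_O\in C^{1,\alpha}(\Rd)$ for every $\alpha\in(0,s)$. This in turn makes the pointwise definition of $\Ls v_O$ meaningful for $\alpha>2s-1$, which can always be arranged since $s<1$.

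The main obstacle is the compact support statement. By radial monotonicity and continuity, $\Omega$ must be a ball $B_R(0)$ with $R\in(0,+\infty]$ (the value $R=0$ is excluded since a standard bump computation yields $\mathscr{E}(v_O)<0$). Suppose by contradiction $R=+\infty$, so that $\Ls v_O(y)=1-\tfrac{\beta}{2}|y|^2$ on all of $\Rd$. The admissibility condition $\big(\tfrac{\beta}{2}|y|^2-1\big)v_O\in L^1(\Rd)$ combined with the embedding $H^s_{\mathrm{loc}}\hookrightarrow L^1_{\mathrm{loc}}$ yields $v_O\in L^1(\Rd)$; radial monotonicity and integrability then give $v_O(y)\to 0$ as $|y|\to\infty$, hence $v_O\in L^\infty(\Rd)$. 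Combining $v_O\in L^1\cap L^\infty$ with the $C^{1,\alpha}$ regularity (with $\alpha>2s-1$) and the standard singular-integral splitting between $B_1(y)$ and its complement yields $\Ls v_O\in L^\infty(\Rd)$, which contradicts the unbounded right-hand side $1-\tfrac{\beta}{2}|y|^2$. Hence $R<\infty$, $v_O$ is compactly supported in $\overline{B_R(0)}$, its mass $M_1$ defined in \eqref{eq:M1} is finite, and the decomposition $\Omega=B_R(0)$, $I=\Rd\setminus B_R(0)$ follows.
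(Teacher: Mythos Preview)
Your overall plan matches the paper almost step for step: radial symmetry via the rearrangement lemma plus uniqueness, the one-sided variations for the Euler--Lagrange conditions, and the appeal to Silvestre/Caffarelli--Salsa--Silvestre for the $C^{1,s}$ regularity are exactly what the paper does (the paper simply calls the variational derivation ``standard''). The only substantive divergence is the argument ruling out $\Omega=\Rd$, i.e.\ the compact-support step.

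There your argument has a genuine gap. To conclude $\Ls v_O\in L^\infty(\Rd)$ from the near/far splitting you need a \emph{uniform} bound on the local $C^{1,\alpha}$ seminorm of $v_O$ on all unit balls $B_1(y)$. The obstacle-problem regularity you invoke is local; the interior Schauder constants depend on the $C^\alpha$ norm of the right-hand side on $B_2(y)$, and here that right-hand side is $1-\tfrac{\beta}{2}|y|^2$, whose local norms blow up as $|y|\to\infty$. So a uniform $C^{1,\alpha}$ bound is not available a priori, and in fact it \emph{must} fail: the far part of the singular integral is uniformly bounded by $L^1\cap L^\infty$, so the near part has to carry all the growth of $1-\tfrac{\beta}{2}|y|^2$. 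Using the global $C^{1,\alpha}$ regularity here is circular, since globality is only justified \emph{after} compact support is known.

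The paper closes this step differently, by citing Fall's Liouville-type theorem \cite{fall}: if $\Ls v=P$ on all of $\Rd$ with $P$ a polynomial, then $v$ is affine and $P\equiv 0$, impossible here. Your route is easily repaired without that reference. For instance, test the equation against $\varphi_R(y):=\varphi(y/R)$ for a fixed cutoff $\varphi$: since $\Ls\varphi_R=R^{-2s}(\Ls\varphi)(\cdot/R)$ and $\Ls\varphi\in L^\infty$, one has $\big|\int v_O\,\Ls\varphi_R\big|\le CR^{-2s}\|v_O\|_{L^1}\to 0$, while $\int(1-\tfrac{\beta}{2}|y|^2)\varphi_R\sim -cR^{d+2}\to -\infty$. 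Alternatively, observe that $v_O\in H^s(\Rd)$ forces $\Ls v_O\in H^{-s}(\Rd)$, whereas no nonzero polynomial lies in $H^{-s}(\Rd)$.
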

\begin{proof}
The regularity of $v_O$ follows from \cite{SilvestrePhD} and the derivation of the Euler-Lagrange equations \eqref{eq:compatibility}
is standard. The validity of \eqref{eq:compatibility} implies that
\begin{equation}
\label{eq:suppo1}
B_{R_\beta}(0)\subseteq \Omega, \,\,\,\,\,\,R_\beta:=\sqrt{\frac{2}{\beta}}.
\end{equation}

In fact, suppose that there is a point $\bar y$ in $B_{R_\beta}(0)$ such that
$v_O(\bar y)=0$. Then, $\bar y$ is a global
minimum point for $v_O$ and thus $\Ls v_O(\bar y) \le 0$. But $\vert \bar y\vert <\sqrt{2/\beta}$
and thus the first of \eqref{eq:compatibility} gives $\Ls v_O(\bar y) >0$, absurd.

Next, we show that $\Omega$ cannot be the whole space.
{ In fact, thanks to
\cite[Theorem 1.2]{fall}, solving
\[
\Ls v = P\,\,\,\hbox{ in } \Rd
\]
with $P$ a polynomial, forces $u$ to be affine and $P$
to be equal to zero, which is clearly not the case in our situation.
}
% In fact, if $\Omega=\Rd$, we would
%have that $v$ verifies
%\[
%\Ls v_O = 1- \frac{\beta}{2}\vert y\vert^2 \,\,\,\,\,\hbox{ in } \Rd
%\]

 To conclude that $v_O$ is radially symmetric and has compact support, we argue as follows.
We denote with $v^{*}:\Rd\to \R$ the symmetric decreasing rearrangement of $v_O$. Lemma \ref{lemma:rearrangement}
gives that
\[
\mathscr{E}(v^*) \le \mathscr{E}(v_O),
\]
and thus $v^*$ is a competitor for $v_O$. The uniqueness of $v_O$ entails that $v\equiv v^*$ and thus that $v_O$ is radial symmetric
and decreasing. Hence, since $v_O$ can not be strictly positive everywhere in $\Rd$,
$\Omega$ should be a ball $B_R(0)$ with  $R>\sqrt{2/\beta}$.
\end{proof}

\subsubsection{\bf Explicit form.}\label{sss:explicit}
 The solution $v_O$  of the obstacle problem \eqref{eq:compatibility} can be explicitly computed. We set
\begin{equation}
\label{eq:dyda1}
{\tilde{v}}_{1}(y):=\frac{1}{\kappa_{s,d}}(1 - \vert y\vert^2)^{1+s}_{+}\,\,\,\,\,\hbox{ for } y\in \mathbb{R}^d,
\end{equation}
where $\kappa_{s,d}:=2^{2s}\Gamma(s+2)\Gamma(s+ \frac{d}{2})\Gamma(\frac{d}{2})^{-1}$, with $\Gamma(\cdot)$ being the Euler $\Gamma$-function. This function is supported in the ball of radius 1. According to the formulas derived by Dyda
in \cite{dyda}  we have
\begin{equation}
\label{eq:dyda2}
\begin{cases}
\Ls {\tilde{v}}_{1} (y)= 1 - \gamma_{s,d}\vert y\vert^2=:f(y)\,\,\,\,&\hbox{ in }B_{1}(0)\\
v_1(y) = 0 \,\,\,\,&\hbox{ in } \Rd\setminus B_1(0),
\end{cases}
\end{equation}
with $\gamma_{s,d}:=1+ \frac{2s}{d}>1$.
Notice that $\Ls {\tilde{v}}_{1} (y)$ is positive for small $|y|$ but negative for $|y|\sim 1$. Next, we need to change the constant $\gamma_{s,d}$ into $\beta/2$ in the last formula, and this is done by rescaling as follows: we introduce a parameter $\lambda>0$ and set
 \begin{equation}
 \label{eq:explicit_obstacle}
 v_D(y):= \frac{1}{\lambda^{2s}}{\tilde{v}}_1(\lambda y) = \frac{1}{\lambda^{2s}\kappa_{s,d}}(1- \lambda^2 \vert y\vert^2)_{+}^{1+s},
 \end{equation}
Fixing the value $\lambda:=\sqrt{\beta/(2\gamma_{s,d})}\,,$ and setting $R_D=1/\lambda$ we observe that:  \\

 (i) $v_D$ is supported in the ball $B_{R_D}(0)$ with
\begin{equation}
R_D:=\sqrt{\frac{2\gamma_{s,d}}{\beta}}=\left(2(1+\frac{2s}{d})(d+2(1+s))\right)^{1/2},
\end{equation}

(ii) we have the regularity ${\tilde{v}}_1\in C^{1,s}(\Rd)$, and

(iii) for every $y\in B_{R_D}(0)$ we have
\[
\Ls v_D (y)=\frac{1}{\lambda^{2s}}(\Ls {\tilde{v}}_1)(\lambda y)\lambda^{2s} = f(\lambda \,y) = 1- \frac{\beta}{2}\vert y\vert^2.
\]
Thus,  $v_D$ solves the problem
\begin{equation}
\label{eq:obstacle_dyda}
\begin{cases}
\Ls v_D (y) = 1- \frac{\beta}{2}\vert y\vert^2\,\,\,\,\,&\hbox{ in } B_{R_D}(0)\\
v_D = 0\,\,\,\,&\hbox{ in } \Rd\setminus B_{R_D}(0).
\end{cases}
\end{equation}
{Finally, we have the following important lemma establishing the link between this solution and the obstacle problem \eqref{eq:ob_problem}.}

\begin{lemma}
\label{lemma:vd=vo}
 The solution $v_O$  of the obstacle problem coincides with $v_D$, $v_D=v_O$.
\end{lemma}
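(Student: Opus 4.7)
The plan is to verify that $v_D$ satisfies the Euler--Lagrange conditions \eqref{eq:compatibility} characterising the obstacle problem and then invoke the uniqueness part of Theorem~\ref{eq:stationary_obstacle}. The admissibility $v_D\in\mathscr{K}$ comes for free: $v_D$ is compactly supported, of class $C^{1,s}(\Rd)\subset H^s(\Rd)$, nonnegative by construction, and $(\tfrac{\beta}{2}|y|^2-1)v_D$ is a bounded compactly supported function, hence in $L^1(\Rd)$. Inside the positivity set $\{v_D>0\}=B_{R_D}(0)$ the identity $\Ls v_D=1-\tfrac{\beta}{2}|y|^2$ is exactly \eqref{eq:obstacle_dyda}, obtained by pulling Dyda's formula \eqref{eq:dyda2} back through the rescaling \eqref{eq:explicit_obstacle}. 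The only substantial item left is therefore the pointwise lower bound
\[
\Ls v_D(y)\;\ge\;1-\frac{\beta}{2}|y|^2 \qquad \text{for } |y|\ge R_D.
\]

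This is where I expect the main obstacle, since outside the support both sides are negative and the inequality is not vacuous. My plan is to exploit that $v_D\equiv 0$ outside $B_{R_D}$, so that for $|y|>R_D$ the singular integral \eqref{de:def_Deltas} collapses to the purely absolutely convergent expression
\[
\Ls v_D(y)\;=\;-\,C(d,s)\!\int_{B_{R_D}}\frac{v_D(z)}{|y-z|^{d+2s}}\,\d z.
\]
For fixed $z\in B_{R_D}$ and $y=\rho\hat y$ with $\rho=|y|>R_D>|z|$ I will compute $\partial_\rho|y-z|^2=2\rho-2\hat y\cdot z\ge 2(\rho-|z|)>0$, so $|y-z|$ is strictly increasing in $\rho$. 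Consequently the integrand decreases in $|y|$, the integral is nonincreasing in $|y|$, and $\Ls v_D(y)$ is nondecreasing in $|y|$ on $\{|y|\ge R_D\}$. Since $v_D\in C^{1,s}(\Rd)$ with $s<1$, $\Ls v_D$ is continuous across $\partial B_{R_D}$ and its boundary value matches the interior limit $1-\gamma_{s,d}$, so $\Ls v_D(y)\ge 1-\gamma_{s,d}$ for every $|y|\ge R_D$. Finally, the specific choice $R_D=\sqrt{2\gamma_{s,d}/\beta}$ gives $1-\tfrac{\beta}{2}|y|^2\le 1-\gamma_{s,d}$ on the same set, closing the inequality.

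With the three items in \eqref{eq:compatibility} verified for $v_D$, the standard equivalence between the obstacle problem and its Euler--Lagrange formulation identifies $v_D$ as a solution of \eqref{eq:ob_problem}; uniqueness in Theorem~\ref{eq:stationary_obstacle} then forces $v_D=v_O$. The only non-routine ingredient in this scheme is the monotonicity-in-radius argument outlined above; everything else is either an immediate check or a direct citation of already-established facts in the paper.
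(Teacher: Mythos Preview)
Your proof is correct and follows a genuinely different strategy from the paper's. The paper never computes $\Ls v_D$ outside the support; instead it starts from $v_O$, whose support is already known (from Theorem~\ref{th:properties}) to be a ball $B_R(0)$, and compares the two radii by contradiction. Assuming $R\neq R_D$, the authors rescale $v_D$ to a function $\tilde v$ supported on $B_R(0)$, so that $w:=\tilde v-v_O$ satisfies $\Ls w=\tfrac{\beta}{2}(1-R_D^2/R^2)|y|^2>0$ in $B_R(0)$ with zero exterior data; the fractional Hopf lemma then forces $w$ to have nontrivial growth of order $\dist(\cdot,\partial B_R)^s$ at the boundary, contradicting the $C^{1,s}$ regularity of both $\tilde v$ and $v_O$. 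Once $R=R_D$, the difference $v_O-v_D$ is $s$-harmonic in $B_R$ with zero exterior data, hence vanishes.

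Your route is more self-contained: you directly verify the KKT conditions \eqref{eq:compatibility} for $v_D$ and appeal to uniqueness of the convex minimisation problem. The key point---monotonicity of the Riesz-type integral along rays for $|y|>R_D$, yielding $\Ls v_D(y)\ge \Ls v_D(R_D\hat y)=1-\gamma_{s,d}\ge 1-\tfrac{\beta}{2}|y|^2$---is an elementary replacement for the Hopf lemma. The only ingredient you invoke that the paper does not state explicitly is the converse ``KKT $\Rightarrow$ minimiser'', but for a strictly convex energy over a convex cone this is indeed routine: if $v_D$ satisfies \eqref{eq:compatibility} then for any $w\in\mathscr K$ one has $\int(\Ls v_D-1+\tfrac{\beta}{2}|y|^2)(w-v_D)\ge 0$, which is the first-order optimality condition. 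Your argument thus trades a boundary-regularity tool (Hopf) for a global potential-theoretic computation, at the cost of having to justify continuity of $\Ls v_D$ across $\partial B_{R_D}$; the $C^{1,s}$ regularity of $v_D$ (hence $\Ls v_D\in C^{1-s}$) handles this cleanly.
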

\begin{proof}
We first prove that the supports of $v_D$ and of $v_O$ coincide and then that equality of solutions, $v_D\equiv v_O$.

(i) to deal with the supports we
 argue by contradiction. Assume that
%$v_D\neq v_O$ and that, in particular,
their supports are different. This means that we may suppose that
$R>R_D$ (the opposite situation can be treated with the very same argument). We let
$$
\tilde{v}(y):= \Big(\frac{R_D}{R}\Big)^{-2s}v_{D}\Big(\frac{R_D}{R}y\Big)
 $$
 in such a way that $\tilde{v}$ is the unique solution of
\begin{equation}
\label{eq:tildev}
\begin{cases}
\Ls \tilde v = 1 - \frac{\beta}{2}\frac{R_D^{2}}{R^2}\vert y\vert^2\,\,\,&\hbox{ in } B_{R}(0),\\
\tilde v = 0 \,\,\,&\hbox{ in }\Rd\setminus B_{R}(0).
\end{cases}
\end{equation}
We set $w:=\tilde{v}-v_O$ and we note that $w$ is a $C^{1,s}(\Rd)$ solution of
\begin{equation}
\label{eq:wabsurd}
\begin{cases}
\Ls w = \frac{\beta}{2}\Big(1-\frac{R_D^{2}}{R^2}\Big)\vert y\vert^2\,\,\,&\hbox{ in } B_R(0)\\
w = 0 \,\,\,&\hbox{ in }\Rd\setminus B_R(0).
\end{cases}
\end{equation}
Therefore, since $\frac{\beta}{2}\Big(1-\frac{R_D^{2}}{R^2}\Big)\vert y\vert^2>0$ in $B_R(0)$, the fractional version of the Hopf Lemma
implies that either $w$ vanishes in $B_R(0)$, and thus $R=R_D$, or there
exists some $\delta_0>0$ such that for any $x\in \partial B_R(0)$
\[
\liminf_{B_r\ni z\to x}\frac{w(z)}{\hbox{dist}^s(z,\partial B_r)}\ge \delta_0, \,\,\,\,B_r
\,\,\,\hbox{ is an interior ball with radius } r\,\,\hbox{ at }x.
\]
but this is impossible due to the $C^{1,s}$ regularity of $w$.

\medskip

(ii) Now, we call $v:=v_O-v_D$ and we observe that it solves
\[
\begin{cases}
\Ls v = 0 \,\,\,\,&\hbox{ in } B_R(0)\\
v = 0\,\,\,\,&\hbox{ in } \Rd\setminus B_R(0).
\end{cases}
\]
Thus, $v=0$ in $\Rd$ and therefore $v_D = v_O$.

\end{proof}

%%%%%%%%%%%%%%%%%%%%%%%%%%%%%%%%%%%%%%%%%%%%%%%%%%

\subsubsection{\bf Adjusting mass and constant.}
\label{sss:adj_mass} The fact that $v_D=v_O$ permits to construct a  solution $v_C$ of \eqref{eq:stat4bis} for any parameter $C>0$ simply by rescaling
the solution $v_O$ of the obstacle problem \eqref{eq:ob_problem}. The free constant $C>0$ allows to fix at will either the radius of the support or the mass of the self-similar solution.
More precisely, we have the following result.
\begin{prop}
\label{lemma:rescaling_obstacle}
For any $C>0$ there exists a unique solution  $v=v_C\in C^{1,\alpha}(\Rd)$, $\alpha\in (0,s)$
 of the obstacle problem
 \begin{equation}
\label{eq:compatibility_C}
\begin{cases}
v_C\ge 0, \,\,\,\,\hbox{ and } \,\Ls v_C \ge C- \frac{\beta}{2}\vert y\vert^2\,\,\,\hbox{ a.e. in } \Rd,\\
v_C = 0 \,\,\,\,\hbox{ in } I_C,\\
v_C>0, \,\,\,\hbox{ and }\,\Ls v_C = C- \frac\beta 2\vert y\vert^2\,\,\,\,\hbox{ a.e. in }B_{R_C}(0),
\end{cases}
\end{equation}
which is supported in the ball of radius
\begin{equation}
R_C= C^{1/2}R_D\,,
\end{equation}
and has mass
 \begin{equation}
 \label{eq:mass_law}
 M_C:=\int_{\Rd}v_C(y)\d y =M_1 C^{1+d/(2 +s)}.
 \end{equation}
\end{prop}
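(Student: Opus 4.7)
The plan is to reduce the general-$C$ obstacle problem to the already solved case $C=1$ via an explicit scaling. Since $v_O = v_D$ solves \eqref{eq:compatibility} (the $C=1$ instance of \eqref{eq:compatibility_C}), I would seek $v_C$ in the form $v_C(y) := A\, v_O(\mu y)$ with constants $A, \mu > 0$ chosen so that both the constant and the quadratic terms in \eqref{eq:compatibility_C} match simultaneously.

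Using the scaling identity $\Ls[v_O(\mu\,\cdot\,)](y) = \mu^{2s}(\Ls v_O)(\mu y)$, one computes in the positivity set
\[
\Ls v_C(y) = A\mu^{2s}\Big(1 - \tfrac{\beta}{2}\mu^2 |y|^2\Big).
\]
Requiring this to equal $C - (\beta/2)|y|^2$ imposes the two conditions $A\mu^{2s}=C$ and $A\mu^{2s+2}=1$, which uniquely determine $\mu = C^{-1/2}$ and $A = C^{1+s}$. Hence $v_C(y) = C^{1+s}\, v_O(y/\sqrt{C})$; its support is precisely the ball of radius $R_C = \sqrt{C}\,R_D$, and the global obstacle inequality $\Ls v_O \ge 1 - (\beta/2)|z|^2$ from Theorem \ref{th:properties} rescales verbatim to $\Ls v_C \ge C - (\beta/2)|y|^2$ a.e.\ in $\Rd$. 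The $C^{1,\alpha}$-regularity passes under the rescaling.

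For uniqueness there are two equivalent routes. Either one observes that if $\tilde v_C$ is another solution of \eqref{eq:compatibility_C}, then $w(z) := C^{-(1+s)}\tilde v_C(\sqrt{C}\,z)$ solves the $C=1$ obstacle problem \eqref{eq:compatibility}, and by Lemma \ref{lemma:vd=vo} it must coincide with $v_O$, forcing $\tilde v_C \equiv v_C$; or one repeats the energy minimization of Theorem \ref{eq:stationary_obstacle} verbatim with the modified functional $\mathscr{E}_C(v) := \tfrac{1}{2}\int_{\Rd}|\Lsm v|^2\,\d y + \int_{\Rd}\big(\tfrac{\beta}{2}|y|^2 - C\big)v\,\d y$, whose coercivity and strict convexity are unchanged since only the coefficient of the linear perturbation varies. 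Finally, the mass follows by the change of variables $z = y/\sqrt{C}$:
\[
M_C = \int_{\Rd} v_C(y)\,\d y = C^{1+s}\int_{\Rd} v_O(y/\sqrt{C})\,\d y = C^{1+s+d/2}\,M_1.
\]
The argument is essentially bookkeeping; the only point of care is pinning down the two scaling exponents $A$ and $\mu$ uniquely from the two matching conditions, so I do not expect any real obstacle.
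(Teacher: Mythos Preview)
Your proposal is correct and follows essentially the same route as the paper: both construct $v_C(y)=C^{1+s}v_O(C^{-1/2}y)$ by rescaling the $C=1$ solution, verify that the Euler--Lagrange conditions transform correctly, and compute the mass by the change of variables $z=y/\sqrt{C}$. You are in fact slightly more thorough than the paper's proof, which does not spell out the uniqueness argument; your second route (strict convexity of the modified energy $\mathscr{E}_C$) is the cleanest way to close that point.
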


%%%%%
\begin{proof}
We let $v=v_O$ be the solution of the obstacle problem
\eqref{eq:ob_problem}--\eqref{eq:K}. We set $w_O:=\Ls v_O$. Then,
 for any $C>0$, we define
 \begin{equation}
 \label{eq:vc}
 v_C(y):=C^{1+s}v(C^{-1/2}y),
 \end{equation}
so that
 \begin{equation}
 \label{eq:wc}
 w_C(y):= C w(C^{-1/2}y)
 \end{equation}
 satisfies $w_C(y) = C(\Ls v)(C^{-1/2}y)$. Consequently, since $v$ and $w$
 solve the compatibility equations \eqref{eq:compatibility},
 we get that the couple $v_C$ and $w_C$ solves \eqref{eq:compatibility_C}.
 Moreover, a simple computation gives (recall \eqref{eq:explicit_obstacle})
\begin{equation}
\label{eq:mass_C}
M=\int_{\Rd}v_C \d y = M_1C^{1+d/2 +s}.
\end{equation}
\end{proof}

\noindent {\sc Constant and verification.} The value for the constant that is called $C_2$ in formula \eqref{eq:ss_intro} of the introduction is given by
\begin{equation}
\label{eq:C2value}
C_2^{1+s}=\frac{\lambda^2}{\kappa_{s,d}}=\frac{d}{2(d+2+2s)(d+2s)\kappa_{s,d}}.
\end{equation}
This constant coincides with known values for the limit cases $s=0$ and $s=1$ agree.
More precisely, for $s=0$ we get the self-similar solution of Barenblatt type for the PME with value  $C_2=1/6$ in 1D and $C_2=1/2(d+2)$ in higher dimensions. For $s=1$ (Thin Film) it is known that
$C_2^2=1/120$ in 1D,  while we get $C_2^2=1/8(d+2)(d+4)$ for $d\ge 1$.

%%%%%%%%%%%%%%%%%%%%%%%%%%%%%%%%%%%%%%%%%%%%%%%%%%%%%%%%%%%%%%%%%

\subsubsection{\bf Self-similar weak solutions with
a connected positivity set.}
\label{sss:}
Now, we address the question of the existence of
self-similar weak solutions to \eqref{eq:prob_intro1} with a connected positivity set.
As we will see, this is a regularity question about the solutions \eqref{eq:general_sol}. More precisely, we remark that we look for weak solutions in the sense of Definition
\ref{def:weak}. This means that $u$ belongs to $H^{1+s}(\Rd)$ for a.e. $t\in (0,+\infty)$.
The same regularity holds also for the self-similar profile $v$ given by \eqref{eq:source_type_form}. Therefore, the arbitrary constant $K$ in the decomposition formula \eqref{eq:general_sol} must vanish.
In fact, as we have already observed, the $v_2$ component of
the general solution \eqref{eq:general_sol} is indeed a rescaled
version of the Getoor solution $v_G(y) = \kappa_{s,d}^{-1}(1-\vert y\vert^2)_{+}^s$, $y\in \mathbb{R}^d$ and we have
\begin{equation}
\label{eq:getbad}
v_G \notin H^{1+s}(\Rd).
\end{equation}
{
It is interesting to observe that with minor modifications one can also prove that in general that $v_2\notin H^{1+s}(\Rd)$, where we recall $v_2$ solves
\[
\begin{cases}
\Ls v_2 = 1\,\,\,\,\hbox{ in } \mathscr{P}\\
v_2 = 0 \,\,\,\,\,\hbox{ in } \Rd \setminus \mathscr{P},
\end{cases}
\]
when $\mathscr{P}$ is smooth, bounded and satisfying the internal ball condition.
}

To prove \eqref{eq:getbad} we can reason as follows.
On the one hand, we observe that if $s\in (0,1/2]$ then
$\nabla v_G$ is neither in $L^2(\Rd)$. In fact, we have\normalcolor
\[
\nabla v_G(y) =
\begin{cases}
\kappa_{s,d}^{-1}2y(1-\vert y\vert^2)^{s-1}, \,\,\,\vert y\vert \le 1\\
0\,\,\,\hbox{ otherwise in } \Rd.
\end{cases}
\]
Therefore,
\[
\int_{\Rd}\vert\nabla v_G\vert^2 \d y = 4\kappa_{s,d}^{-2}\int_{B_{1}(0)}\vert y\vert^2\big( 1-\vert y\vert^2\big)^{2s-2}\d y = +\infty, \,\,\,\,\hbox{ if } s\le 1/2.
\]
On the other hand when $s\in (1/2,1)$,
we observe that $v_G(y):=g(\vert y\vert)$ with $g(t) = \kappa_{s,d}^{-1}(1-t^2)^s_{+}$.
The function $g$ does not belong to $H^{1+s}(\mathbb{R})$. In fact, if $g\in  H^{1+s}(\mathbb{R})$ then
we would have that $g'\in H^s(\mathbb{R})\subset C^0(\mathbb{R})$, thanks to Sobolev
embeddings. This is impossible since $g' \to -\infty$ for $t\to 1^{-}$.
Now, since (see e.g. \cite{grafakos})
\[
\widehat{v}_G(\xi) = \widehat{g}(\vert \xi\vert), \,\,\,\,\xi\in \Rd,
\]
if $v_G\in H^{1+s}(\Rd)$, then we would have
\begin{align}
+\infty > \int_{\Rd\cap \left\{ \vert \xi\vert>1\right\}}\vert \xi\vert^{2s+2}\widehat{v}_G^2(\xi)\d \xi=
\omega_d\int_{1}^{+\infty} \rho^{2s +1 + d}g^{2}(\rho) \d \rho\ge \omega_d\int_{1}^{+\infty} \rho^{2s +2}g^{2}(\rho) \d \rho,
\end{align}
where $\omega_d$ is the measure of the unitary sphere in $\Rd$, $\d\ge 2$.
Thus, since we already know that $g\in L^2(\Rd)$, the last inequality would imply that $g\in H^{1+s}(\Rd)$, absurd.

Therefore, we must take $K=0$ in \eqref{eq:general_sol} and thus the self-similar solutions complying
with the regularity prescribed by Theorem \ref{th:existence} are rescaled version of the model
solution $v_D$ in \eqref{eq:explicit_obstacle}.
In particular, (see Lemma \ref{lemma:rescaling_obstacle}) the constant $C$ is fixed according
to the mass law \eqref{eq:mass_law}. Moreover, the fact that $v_D$ is indeed the solution of an obstacle problem reflects in a kind of minimality, with respect to the energy \eqref{eq:energy_1}, of the self-similar solution.

 The following Theorem clarifies the situation.
 \begin{theor}
 \label{th:stationary}
 Given $M>0$, we let $v_C$ be the solution of
 \eqref{lemma:rescaling_obstacle} with the constant $C$ complying with the mass law \eqref{eq:mass_law}. Then, for
 \[
 \alpha = \frac{d}{d  + 2(1+ s)}, \,\,\,\,\beta = \frac{1}{d+ 2(1+ s)},
 \]
 the self-similar function
 \begin{equation}
 \label{eq:self_similar_th}
 u_C(x,t):=\frac{1}{(1+t)^\alpha}v_C\Big(\frac{x}{(1+t)^\beta}\Big),
 \end{equation}
 is a weak solution (in the sense of Definition \ref{def:weak}) of
 \eqref{eq:prob_intro1} with mass $M>0$, and it satisfies
  \begin{equation}
 \label{eq:initial_delta_th}
 \lim_{t\to -1^+}u_C(x,t) = M \delta(x) \,\,\,\,\hbox{ in } \mathscr{D}'(\Rd).
 \end{equation}
 \end{theor}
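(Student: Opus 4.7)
The strategy is to run the self-similar change of variables of Section \ref{sec:self_similar} \emph{in reverse}: starting from the profile $v_C$ produced by Proposition \ref{lemma:rescaling_obstacle}, I will check one by one the items in Definition \ref{def:weak} for $u_C(x,t)=(1+t)^{-\alpha}v_C(x/(1+t)^\beta)$, plus the mass normalization and the initial trace. The key algebraic fact is that the exponents $\alpha,\beta$ in \eqref{eq:alpha/beta} were chosen precisely so that $u_C$ solves \eqref{eq:prob_intro1} if and only if the profile $v_C(y)$ is a \emph{stationary} solution of the Fokker--Planck equation \eqref{eq:fokker-cahn}, i.e.\ $\div_y(v_C(\nabla_y w_C+\beta y))=0$. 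By Proposition \ref{lemma:rescaling_obstacle} the profile satisfies the stronger pointwise identity $v_C(\nabla_y w_C+\beta y)=0$ a.e., because $w_C+\frac{\beta}{2}|y|^2$ is constant on $B_{R_C}(0)$ and $v_C\equiv 0$ outside, so the Fokker--Planck equation is satisfied trivially in both regions.

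Next I would verify the regularity and integrability requirements of Definition \ref{def:weak}. By the explicit form \eqref{eq:vc} and the Dyda formula \eqref{eq:explicit_obstacle}, $v_C(y)$ is compactly supported of the form $(\mathrm{const}-\mathrm{const}\cdot|y|^2)_+^{1+s}$, hence $v_C\in C^{1,s}(\Rd)\cap H^{1+s}(\Rd)$ (the integrability across the free boundary is exactly the computation that shows the Getoor profile fails, reversed: here the exponent is $1+s>1/2+s$, so the argument of Subsection 4.2.3 applied to $v_C$ shows membership in $H^{1+s}$ rather than exclusion). Likewise $w_C=\Ls v_C$ is bounded with $\nabla w_C$ bounded on $B_{R_C}(0)$ and zero elsewhere, so $w_C\in H^{1-s}(\Rd)$. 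The $t$-dependence enters only by dilation: setting $y=x/(1+t)^\beta$ and using \eqref{eq:cond_alphabeta} gives scaling invariance of the $L^\infty(0,\infty;H^s)$ and $L^2(0,\infty;H^{1+s})$ norms on any finite interval (up to a power of $1+t$ which is harmless since we cut off near $t=-1$); I would also define the field $\xi$ via \eqref{eq:def_xi} by taking $\xi:=v_C^{1/2}\nabla w_C$ on the positivity set and $\xi:=0$ elsewhere, noting that on the support $\nabla w_C=-\beta y$ is smooth so this is well defined.

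The weak formulation \eqref{eq:weak_formulation} is then verified by a direct computation: plug in the self-similar ansatz and do a change of variables $y=x/(1+t)^\beta$, $\tau=\log(1+t)$ in each of the three integrals on the left-hand side. Using the identity $\partial_t u=-(1+t)^{-\alpha-1}(\alpha v_C+\beta y\cdot\nabla v_C)$ (there is no $\partial_\tau v$ term since $v_C$ is stationary), integration by parts converts the space integrals into integrals of $\div_y(v_C(\nabla_y w_C+\beta y))\varphi$, which vanishes by the stationary equation. The main subtlety—and what I expect to be the principal technical point—is justifying the integration by parts across the free boundary $\partial B_{R_C}$: one needs to check that the boundary contributions of $v_Cw_C\nabla\varphi$ and $w_C\nabla v_C\,\varphi$ drop out, which follows because $v_C$ vanishes to order $1+s>1$ at $\partial B_{R_C}$, so that both $v_Cw_C$ and $w_C\nabla v_C$ extend continuously by $0$, while $\nabla w_C$ exists pointwise in the interior and equals $-\beta y$ there.

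Finally, the mass identity $\int u_C(x,t)\,dx=\int v_C(y)\,dy=M$ is immediate from the scaling $\alpha=\beta d$, and the initial trace is the standard dilation-to-a-delta argument: for $\phi\in C_b(\Rd)$,
\[
\int_{\Rd}u_C(x,t)\phi(x)\,dx=\int_{\Rd}v_C(y)\,\phi\bigl((1+t)^\beta y\bigr)\,dy\xrightarrow[t\to-1^+]{}\phi(0)\int_{\Rd}v_C=M\phi(0),
\]
by dominated convergence since $v_C$ is integrable and $\phi$ is bounded and continuous at $0$, proving \eqref{eq:initial_delta_th}.
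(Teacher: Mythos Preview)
Your proposal is correct and is precisely the approach the paper takes: its proof consists of the single phrase ``by a direct computation'' after writing down $u_C$, and you have supplied the details (Fokker--Planck stationarity of $v_C$, regularity from the explicit profile, the change of variables in \eqref{eq:weak_formulation}, and the dilation-to-delta argument for \eqref{eq:initial_delta_th}). One small correction: $w_C=\Ls v_C$ and $\nabla w_C$ are \emph{not} zero outside $B_{R_C}(0)$ since the operator is nonlocal; this does not affect your argument because in \eqref{eq:weak_formulation} the pressure appears only multiplied by $u$ or $\nabla u$, and the needed membership $w_C\in H^{1-s}(\Rd)$ follows immediately from $v_C\in H^{1+s}(\Rd)$ via the Fourier symbol.
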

\begin{proof}

 Starting from $v_C$, we define, for (see \eqref{eq:alpha/beta})
 $\alpha= \frac{d}{d  + 2(1+ s)}, \,\,\,\,\beta = \frac{1}{d+ 2(1+ s)}$,
 \begin{equation}
 \label{eq:self_similar}
 u_C(x,t) := \frac{1}{(1+t)^\alpha}v_C\Big(\frac{x}{(1+t)^\beta}\Big), \,\,\,\,(x,t)\in \Rd\times (0,+\infty),
 \end{equation}
 and we obtain, by a direct computation that it is a distributional (self-similar) solution of \eqref{eq:prob_intro1} such that
  \begin{equation}
 \label{eq:initial_delta}
 \lim_{t\to -1^+}u_C(x,t) = M \delta(x) \,\,\,\,\hbox{ in } \mathscr{D}'(\Rd).
 \end{equation}

 \end{proof}

%%%%%%%%%%%%%%%%%%%%%%%%%%%%%%%%%%%%%%%%%%%%%%%%%%%%%%%%%%%%%%%%%%%%%%%%%
\section{\bf Long time analysis}
\label{sec:long_time}
In this Section we address the long time behavior of the weak solutions constructed in Theorem \ref{th:existence}.
{Our first result on the long-time behavior is Theorem \ref{th:long_time1} in which we prove that the set of cluster points for $\tau\to +\infty$ (that is, the $\omega$-limit set defined in \eqref{eq:omega_lim} below) of the weak solution to the Fokker-Planck equation \eqref{eq:FP} (see below for the definition) is not empty and that its elements are indeed weak stationary solutions of \eqref{eq:FP}. This proof needs an extra assumption on the regularity of the class of weak solutions, see  \eqref{eq:extra_assumption}, that seems technical to us. Let us briefly explain the problem: Unfortunately, the basic energy estimate (see \eqref{eq:energy_est}) available for the weak solutions of \eqref{eq:prob_intro1} does not rescale directly to an analogous energy estimate (see \eqref{eq:energy_estv}) for the weak solutions of the Fokker-Planck equation. This estimate
should contain, as a formal computation reveals, both the second moment
and the fractional energy $\frac{1}{2}\int_{\Rd}\vert \Lsm u\vert^2$ and
 appears to be given by a proper balance between these two terms.
 We obtain \eqref{eq:energy_estv} by rescaling an improved
 energy estimate for the weak solutions of \eqref{eq:prob_intro1} that contains
 both a fractional energy $\frac{1}{2}\int_{\Rd}\vert \Lsm u\vert^2$ and  the second moment (see Proposition \ref{prop:entropy_dissipation_v}). These two terms are properly weighted by precise time dependent factors related to the rescaling \eqref{eq:self_similar_change_intro}.
One of the main ingredient in the proof of this estimate is the following equality (see Lemma \ref{lem:fourier})
\begin{equation}
\label{eq:fourier_nice_intro}
\frac{1}{2}\int_{\Rd}\vert \Lsm u\vert^2 \d x = -\frac{1}{d-2s}\int_{\Rd}p (x\cdot \nabla u) \d x.
\end{equation}
This identity resembles a Pohozaev identity and furnishes the exact balance between the second moment and the fractional energy. As the proof of Lemma
\ref{lem:fourier} reveals it holds for functions with some decay at infinity
in order to guarantee that the right hand side makes sense.

Therefore, this is the class for which we address the long time behaviour.
It is important to observe that weak solutions with compact support
actually satisfy \eqref{eq:fourier_nice_intro}. It is an open problem
to prove that \eqref{eq:fourier_nice_intro} holds for all weak solutions.

Our second result is Theorem \ref{th:long_time} in which we are interested in relating the long-time dynamics of \eqref{eq:prob_intro1}
with the self-similar solutions constructed in Section \ref{sec:self_similar}. At this stage our analysis needs some connectedness
assumption on the elements of the $\omega$-limit set of a weak solution $v$. This assumption permits to conclude that the only stationary solution that attracts the dynamics for large times is the compactly supported self-similar solution $v_C$
constructed in Theorem \ref{th:stationary}
 with the constant $C$ adjusted to match the mass constraint. As a result, we will obtain the long-time asymptotics
\[
u(\cdot,t)-v_C(\cdot,t)\xrightarrow{t\to +\infty}0\,\,\,\hbox{ in }L^1(\Rd).
\]
}
%
%%%%%%%%%%%%
%%%%%%%%%%%%
%%%%%%%%%%%%

As a starting point we prepare the following technical Lemma
\begin{lemma}
\label{lemma:entropy}
Given $f\in L^1_{loc}(\Rd)$ with $f\ge 0$ and
\begin{equation}
\label{eq:hypoentropy}
\int_{\Rd}(1+\vert x\vert^2) f(x) \d x \le C_1,\,\,\,\,\,\,\int_{\Rd}\vert f\vert^{p^*} \d x \le C_2(p^*), \,\,\,\,p^*>1,
\end{equation}
there holds
\begin{equation}
\label{eq:entropy}
\Big\vert \int_{\Rd}f(x)\log f(x)\d x\Big\vert\le C(p^*).
\end{equation}
\end{lemma}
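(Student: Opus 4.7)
The plan is to split the integral into its positive and negative parts,
\[
\int_{\Rd} f \log f \,\d x = \int_{\{f\ge 1\}} f \log f \,\d x - \int_{\{0<f<1\}} f(-\log f) \,\d x,
\]
and to bound each of the two (now nonnegative) contributions separately. The positive part will be controlled by the $L^{p^*}$ hypothesis and the negative part by the second-moment hypothesis combined with an elementary inequality for $-t\log t$ near $0$.

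For the positive part, I would use the elementary inequality
\[
\log t \le \frac{t^{p^*-1}-1}{p^*-1} \le \frac{t^{p^*-1}}{p^*-1}\qquad\text{for } t\ge 1,
\]
which gives $f\log f \le (p^*-1)^{-1} f^{p^*}$ on $\{f\ge 1\}$. Integrating and using \eqref{eq:hypoentropy} yields
\[
\int_{\{f\ge 1\}} f\log f \,\d x \le \frac{C_2(p^*)}{p^*-1}.
\]

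For the negative part, I would further partition $\{0<f<1\}$ into the two regions
\[
A := \{x : e^{-|x|^2}\le f(x) < 1\},\qquad B := \{x : 0<f(x)<e^{-|x|^2}\}.
\]
On $A$ we have $-\log f \le |x|^2$, so $f(-\log f)\le |x|^2 f$ and
\[
\int_{A} f(-\log f)\,\d x \le \int_{\Rd}|x|^2 f \,\d x \le C_1
\]
by the first hypothesis in \eqref{eq:hypoentropy}. On $B$ I would invoke the standard inequality $-t\log t \le (2/e)\sqrt{t}$, valid for all $t\in(0,1)$ (the maximum of $-\sqrt{t}\log t$ is attained at $t=e^{-2}$). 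This gives
\[
f(-\log f) \le \tfrac{2}{e}\sqrt{f} \le \tfrac{2}{e}\,e^{-|x|^2/2}\qquad\text{on }B,
\]
and the right-hand side is integrable on $\Rd$ with a constant depending only on $d$. Summing the three contributions yields \eqref{eq:entropy} with a constant depending only on $d$, $p^*$, $C_1$ and $C_2(p^*)$.

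I do not foresee any real obstacle: this is a routine De la Vallée Poussin type splitting, and the only mildly delicate point is choosing the threshold $e^{-|x|^2}$ so that the logarithm is absorbed by $|x|^2$ on one side and $\sqrt{f}$ decays fast enough on the other side to integrate against the Gaussian tail.
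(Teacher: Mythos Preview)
Your proof is correct and follows essentially the same route as the paper: the same split at $\{f\ge 1\}$ versus $\{0<f<1\}$, and the same secondary partition of the latter at the threshold $e^{-|x|^2}$ with the elementary bound $-t\log t\lesssim\sqrt t$. The only cosmetic difference is that for the positive part you use $\log t\le t^{p^*-1}/(p^*-1)$ directly, whereas the paper writes $t\log t\le t^{1+\eps}$ and appeals to interpolation between $L^1$ and $L^{p^*}$; your version is slightly cleaner.
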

\begin{proof}
We split
\begin{align}
\label{eq:entropy1}
\Big\vert \int_{\Rd}f(x)\log f(x)\d x\Big\vert &\le \Big\vert \int_{\Rd\cap\left\{f\ge 1 \right\}} f(x)\log f(x) \d x\Big\vert +
\Big\vert \int_{\Rd\cap\left\{0\le f\le 1 \right\}} f(x)\log f(x) \d x\Big\vert\nonumber\\
& =: A + B.
\end{align}
Note that \eqref{eq:hypoentropy} and interpolation imply that $f$ is actually controlled
in $L^p$ for any $p\in [1,p^*]$. Thus, since $t\log t\le t^{1+\eps}$ on $[1,+\infty)$
 for some $\eps>0$, we conclude that
\begin{equation}
\label{eq:A}
A \le \int_{\Rd\cap \left\{f\ge 1\right\}}f(x)^{1 +\eps} \d x \le C_2(p^*).
\end{equation}
To control $B$,
we first note that
\[
B \le \int_{\left\{0\le f\le 1\right\}}f(x) \log\Big(\frac{1}{f(x)} \Big)\d x.
\]
To control the integral in the right hand side, we split it in two parts (see, e.g.,
 \cite{dipe_lions}). We
have
\begin{align}
B \le \int_{\left\{0\le f\le 1\right\}\cap \left\{f(x) \ge e^{-\vert x\vert^2} \right\}} f(x) \log\Big(\frac{1}{f(x)} \Big)\d x
+ \int_{\left\{0\le f\le 1\right\}\cap \left\{f(x) \le e^{-\vert x\vert^2} \right\}}f(x) \log\Big(\frac{1}{f(x)} \Big)\d x.
\end{align}
Now, since $t\to -\log t$ is decreasing we have that, on the set where $f(x) \ge e^{-\vert x\vert^2}$,
$\log\Big(\frac{1}{f(x)}\Big) \le \vert x\vert^2$. Thus, the first integral is bounded as
\[
\int_{\left\{0\le f\le 1\right\}\cap \left\{f(x) \ge e^{-\vert x\vert^2} \right\}} f(x) \log\Big(\frac{1}{f(x)} \Big)\d x \le \int_{\Rd}\vert x\vert^2f(x)\d x\le C_1.
\]
As regards the second integral, we use the fact that $t\log(1/t)\le C_3\sqrt{t}$ ($C_3>0$)
when $t\in (0,1)$. Thus,
on the set $\left\{0\le f\le 1\right\}\cap \left\{f(x) \le e^{-\vert x\vert^2} \right\}$ we have
\[
f(x) \log\Big(\frac{1}{f(x)}\Big) \le \sqrt{f(x)}\le C_3 e^{-\frac{1}{2}\vert x\vert^2}.
\]
Thus, the second integral is bounded by
\[
\int_{\left\{0\le f\le 1\right\}\cap \left\{f(x) \le e^{-\vert x\vert^2} \right\}}f(x) \log\Big(\frac{1}{f(x)} \Big)\d x \le C_3 \int_{\Rd}
e^{-\frac{1}{2}\vert x\vert^2}\d x\le C_3\sqrt{(2\pi)^d}.
\]
Collecting all the estimates we have the thesis.
\end{proof}

\subsection{Moments and Refined Energy Estimate}
\label{ss:mo_ene}
In this Subsection we show that if the initial condition $u_0$
has finite second moments, then the weak solutions starting
from $u_0$ maintain their second moments finite.
This fact combined with the energy estimate \eqref{eq:energy_est}
gives a refined energy estimate that turns to be fundamental for the long time behaviour analysis.
The estimate on the second moments and the refined energy estimate
hold for those weak solutions constructed in Theorem \ref{th:existence} that verify that for a.a. $t\in (0,+\infty)$
\begin{align}
\label{eq:extra_assumption}
 (x\cdot\nabla u) \in L^2(\Rd)
\end{align}
Under this condition we prove (see Lemma \ref{lem:fourier} below)
 that
\begin{equation}
\label{eq:fourier_nice}
\frac{d-2s}{2}\int_{\Rd}\vert \Lsm u\vert^2 \d x = -\int_{\Rd}p\, (x\cdot\nabla u)\d x.
\end{equation}
The condition \eqref{eq:extra_assumption} is not optimal in terms of
regularity and serves to guarantee that the right hand side of
\eqref{eq:fourier_nice} makes sense. In particular, what seems to be
needed for the proof is a good decay at infinity for the solutions.
It is interesting to observe that \eqref{eq:fourier_nice} holds, without invoking \eqref{eq:extra_assumption}, for weak solutions with compact support.
It is an interesting open problem to verify its validity
for all the weak solutions given by Theorem \ref{th:existence}.

\medskip

\noindent {\sl\ref{ss:mo_ene}.1.}

Control of the second moment. We state such control for a special class of weak solutions satisfying the just stated assumptions.

To ease the presentation and to convey the main ideas, we work at first with
smooth solutions with a good decay at infinity.
Thus, we let $u$ be a smooth solution of
\[
\begin{cases}
\partial_t u = \div(u\nabla p)\,\,\,\,\hbox{ in } \Rd\times (0,+\infty),\\
p = \Ls u, \,\,\,\,\hbox{ in } \Rd\times (0,+\infty).
\end{cases}
\]
We have
\begin{align*}
\frac{d}{dt}\frac{1}{2}\int_{\Rd}\vert x\vert^2 u\d x
& = -\int_{\Rd}u(x\cdot \nabla p)\d x\nonumber\\
& = \int_{\Rd} p (x\cdot \nabla u)\d x + d\int_{\Rd}up\d x.
\end{align*}
Thus, thanks to Lemma \ref{lem:fourier} below we get
\begin{equation}
\label{eq:smooth_second}
\frac{d}{dt}\frac{1}{2}\int_{\Rd}\vert x\vert^2 u\d x = \frac{d+2s}{2}\int_{\Rd}\vert \Lsm u\vert^2 \d x.
\end{equation}
This identity is interesting since similar relations are available (\cite{TO_private}) for other evolutions of gradient flow type such as the Porous medium equation, both the classical one and both the fractional one (see, for this last case equation \ref{eq:sec_mom_fract_pm} below).
The computation above is of course only formal since we can not use
$\vert x\vert^2$ as a test function in the definition of weak solution.
However, thanks to the estimate on the first moments, we have the following
\begin{lemma}[Second Moments]
\label{lem:sec_mom}
Let $u$ be a weak solution such that \eqref{eq:extra_assumption} holds, then
\begin{equation}
\label{eq:second_moment}
\int_{\Rd}\frac{\vert x\vert^2}{2}u(x,t) \d x \le \int_{\Rd}\frac{\vert x\vert^2}{2}u_0(x)\d x + \frac{d+2s}{2}\int_{\Rd\times (0,t)}\vert \Lsm u\vert^2\d x\d r.
\end{equation}
\end{lemma}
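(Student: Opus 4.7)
The formal identity \eqref{eq:smooth_second} already indicates what the correct coefficient should be; the task is to rigorously justify it at the weak-solution level, where the natural test function $|x|^2/2$ is not admissible because it is neither compactly supported nor bounded. My plan is to test the weak formulation \eqref{eq:weak} against a truncated version $\vf_R(x) := \tfrac{1}{2}|x|^2 \phi_R(x)$ (with a small smoothing near the origin as already remarked in the proof of Lemma \ref{lem:first_mom}), where $\phi_R(x) = g(|x|/R)$ is the smooth radial cutoff from Subsection \ref{ss:cons_mass}, and then let $R \to \infty$.

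A direct computation gives
\[
\nabla \vf_R = x\, \phi_R + \tfrac{|x|^2}{2}\nabla \phi_R, \qquad
\Delta \vf_R = d\, \phi_R + 2\, x\cdot \nabla \phi_R + \tfrac{|x|^2}{2}\Delta \phi_R.
\]
Inserting this in \eqref{eq:weak} separates the right-hand side into two ``principal'' contributions, namely
\[
d\int_0^t\!\!\int_{\Rd} p u \,\phi_R\,\d x\,\d r + \int_0^t\!\!\int_{\Rd} p\, (x\cdot \nabla u)\,\phi_R\,\d x\,\d r,
\]
and three ``remainder'' terms involving the factors $\nabla \phi_R$ and $\Delta \phi_R$, each of which is supported in the annulus $\{R \le |x|\le 2R\}$ and multiplied by at most $|x|^2|\Delta \phi_R|\lesssim 1$, $|x||\nabla \phi_R|\lesssim 1$, or $|x|^2|\nabla \phi_R|\lesssim R$. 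I would control these remainders using the local integrability of $u$, $\nabla u$ and $p$ guaranteed by the weak-solution regularity (Definition \ref{def:weak}) together with mass conservation to absorb the $R$ factor via a growing domain of small density, and show they vanish as $R\to\infty$. This step is the main technical nuisance, but it is morally the same pruning argument already used in Subsection \ref{ss:cons_mass} and in the proof of Lemma \ref{lem:first_mom}; the extra care needed here is only because of the factor $|x|^2$, which forces us to invest the assumption \eqref{eq:extra_assumption} together with the first-moment bound \eqref{eq:first_moment} and the $L^\infty_tL^{2d/(d-2s)}_x$ bound.

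On the principal terms one has $\phi_R\nearrow 1$ pointwise. For the first, since $\int pu = \int |\Lsm u|^2$ and the integrand has a sign after integration in $x$ we can pass to the limit via dominated/monotone convergence on the time integral. For the second, $p\,(x\cdot\nabla u)\in L^1(\Rd\times(0,t))$ thanks to \eqref{eq:extra_assumption} and the regularity $p \in L^2_tH^{1-s}_x$, so dominated convergence applies and produces $\int_0^t\!\int p(x\cdot\nabla u)$. At this point the identity \eqref{eq:fourier_nice} (Lemma \ref{lem:fourier}), which is applicable precisely under \eqref{eq:extra_assumption}, identifies
\[
d\int_{\Rd}pu\,\d x + \int_{\Rd}p(x\cdot\nabla u)\,\d x \;=\; d\int_{\Rd}|\Lsm u|^2\,\d x - \tfrac{d-2s}{2}\int_{\Rd}|\Lsm u|^2\,\d x \;=\; \tfrac{d+2s}{2}\int_{\Rd}|\Lsm u|^2\,\d x,
\]
which is exactly the right-hand side of \eqref{eq:second_moment}.

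Finally, on the left-hand side of \eqref{eq:weak} with $\vf=\vf_R$, nonnegativity of $u$ allows an application of the monotone convergence theorem to obtain $\int_{\Rd} u(x,t)|x|^2 \phi_R/2\,\d x \to \tfrac{1}{2}\int_{\Rd}|x|^2 u(x,t)\,\d x$ (possibly $+\infty$ a priori), while on the initial data $\tfrac{1}{2}\int u_0|x|^2\phi_R \to \tfrac{1}{2}\int u_0|x|^2$ which is finite by hypothesis. Combining with the identified principal terms and the vanishing of the remainders, we conclude, via Fatou on the left-hand side, the desired inequality \eqref{eq:second_moment}; in particular the left-hand side is finite for a.e.\ $t$. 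The hardest part of the argument is controlling the remainder involving the product $p\,\nabla u \cdot (|x|^2\nabla \phi_R)$, where the $|x|^2\nabla \phi_R$ factor is only bounded by $R$ on its support and one must use decay in the annulus to close the estimate; this is where assumption \eqref{eq:extra_assumption} intervenes in a decisive way.
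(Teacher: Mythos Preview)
Your approach is correct and lands on the same endpoint via Lemma~\ref{lem:fourier}, but the paper chooses a different truncation that sidesteps your remainder terms entirely. Instead of $\vf_R=\tfrac12|x|^2\phi_R$, the paper tests with $\vf(x)=g_R(|x|)$ where $g_R(t)=t^2/2$ for $t\le R$ and $g_R(t)=Rt-R^2/2$ for $t\ge R$; this grows only linearly at infinity and is therefore admissible as a test function by the first-moment bound of Lemma~\ref{lem:first_mom}. The gain is that $\nabla\vf=g_R'(|x|)\,x/|x|$ with $g_R'(|x|)/|x|\le 1$ uniformly, so $p\,\nabla u\cdot\nabla\vf=p\,(x\cdot\nabla u)\,g_R'(|x|)/|x|$ passes to the limit by dominated convergence directly from \eqref{eq:extra_assumption}, and likewise $|\Delta\vf|\le d$ uniformly: there are simply no annular remainders to control. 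Your compactly-supported cutoff does produce the extra terms you flag, and the hard one can indeed be closed by writing $\nabla u\cdot\tfrac{|x|^2}{2}\nabla\phi_R=(x\cdot\nabla u)\cdot\tfrac{|x|}{2R}\,g'(|x|/R)$, whose second factor is bounded on the annulus while $p\,(x\cdot\nabla u)\in L^1$ by \eqref{eq:extra_assumption}; so your plan works, though you do not actually need the first-moment bound or the $L^\infty_tL^{p_s}_x$ bound you invoke. The paper's truncation trades the easy admissibility of a compactly-supported test for the cleaner passage to the limit.
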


\begin{proof}[Proof of Lemma \ref{lem:sec_mom}] To prove \eqref{eq:second_moment} we first use the  control the first moments of $u$, proved in Lemma \ref{lem:first_mom}.
For any $R>0$ we consider the real function $g_R:[0,+\infty)\to \R$ such that
\begin{equation}
\label{eq:gr}
g_R(t) =
\begin{cases}
\frac{t^2}{2}\,\,\,\,\,\,\,t\le R\\
Rt - \frac{R^2}{2}\,\,\,\,\,\,\,t\ge R.
\end{cases}
\end{equation}
Then we define $\vf(x) :=g_R(\vert x\vert)$ and observe that
$\vf$, thanks to the first moment estimate, can be used as a test function in the weak formulation \eqref{eq:weak}.

We bound, uniformly in $R$ the integrals in the right hand side of the weak formulation.
We have that
\begin{align*}
\int_{0}^{t}\int_{\Rd}p u \Delta \vf \,\d x \d r =\int_{0}^{t}\int_{\Rd}p u g''_R(\vert x\vert)\d x\d r + (d-1)\int_{0}^{t}\int_{\Rd}p u \frac{g'_{R}(\vert x\vert)}{\vert x\vert }\d x\d r,
\end{align*}
thus being $\frac{g'_{R}(\vert x\vert)}{\vert x\vert } \xrightarrow{R\to +\infty} 1$ and $g'_R(\vert x\vert)\le 1$, we have by dominated convergence that
\[
\lim_{R\to +\infty}\int_{0}^{t}\int_{\Rd}p u \Delta \vf \,\d x \d r = d\int_{0}^{t}\int_{\Rd}p u\,\d x \d r = d\int_{0}^{t}\int_{\Rd} \vert \Lsm u \vert^2\d x\d r.
\]
Now we come to the term
\[
\int_{0}^{t}\int_{\Rd}p(\nabla u\cdot \nabla \vf)\,\d x \d r =
\int_{0}^{t}\int_{\Rd}p(\nabla u \cdot x) \frac{g'_{R}(\vert x\vert)}{\vert x\vert }\d x\d r.
\]
Now, \eqref{eq:extra_assumption} gives
that
\[
\lim_{R\to \infty} \int_{0}^{t}\int_{\Rd}p(\nabla u \cdot x) \frac{g'_{R}(\vert x\vert)}{\vert x\vert }\d x\d r = \int_{0}^t\int_{\Rd}p(\nabla u \cdot x)\d x \d r =-\frac{d-2s}{2}\int_{0}^{t}\int_{\Rd} \vert \Lsm u \vert^2\d x\d r.
\]
Therefore, collecting all the computations we obtain
\begin{align}
\label{eq:sec_mom}
\int_{\Rd}\frac{\vert x\vert^2}{2}u(x,t) \d x  + \frac{d-2s}{2}
\int_{0}^{t}\int_{\Rd} \vert \Lsm u \vert^2\d x\d r&\le
 \int_{\Rd}\frac{\vert x\vert^2}{2}u_0(x)\d x\nonumber\\
&+ d \int_{0}^{t}\int_{\Rd} \vert \Lsm u \vert^2\d x\d r,
\end{align}
namely \eqref{eq:second_moment}.
\end{proof}

\noindent {\sl \ref{ss:mo_ene}.2. Energies and dissipation.}

\begin{prop}
\label{prop:entropy_dissipation_v}
Let $u$ be a weak solution given by the existence Theorem \ref{th:existence} that verifies also \eqref{eq:extra_assumption}.
Then, setting
\begin{equation}
\label{eq:E_rescaled}
E(u(t)):=\frac{(1+t)^{1-2\beta}}{2}\int_{\Rd}\vert\Lsm u(x,t)\vert^2\d x+
\frac{\beta (1+t)^{-2\beta}}{2}\int_{\Rd}\vert x\vert^2 u(x,t)\d x,
\end{equation}
where $\beta:=\frac{1}{d + 2(1+s)}$,
there holds
\begin{equation}
\label{eq:entropy_diss}
E(u(t)) - E(u_0) +\int_{0}^t\int_{\Rd}(1+r)^{1-2\beta}\vert G\vert^2 \d x\d r \le 0,
\end{equation}
where the vector field $G\in L^2(0,T;L^2(\Rd))$ is related to
the vector field $\xi$ in \eqref{eq:energy_est} by
\[
u^{1/2}G = u^{1/2}\xi + \beta(1+t)^{-1}u x,
\]
namely
%is implicitly given by the relation
\begin{equation}
\label{eq:defG}
\nabla\Big((p+(1+t)^{-1}\frac{\beta}{2}\vert x\vert^2)u\Big) -
 \Big(p + (1+t)^{-1}\frac{\beta}{2}\vert x\vert^2\Big)\nabla u= u^{1/2}G\,\,\,\hbox{ a.e. in }\Rd\times (0,+\infty).
\end{equation}
\end{prop}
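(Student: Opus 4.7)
The plan is to establish the identity at the smooth (or approximate) level as an exact equality and then pass to the $\varepsilon\to 0$ limit so that weak lower semicontinuity yields the $\le$ sign in \eqref{eq:entropy_diss}. The three ingredients are (i) the energy dissipation behind \eqref{eq:energy_est}, (ii) the second-moment evolution of Lemma \ref{lem:sec_mom}, and (iii) the Pohozaev-type identity \eqref{eq:fourier_nice}. The crucial algebraic fact that makes all coefficients balance is
\begin{equation*}
1-2\beta = \beta(d+2s),
\end{equation*}
which follows immediately from $\beta = 1/(d+2(1+s))$.

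First I would write, at the smooth level where $\xi = u^{1/2}\nabla p$ and $G = \xi+\beta(1+t)^{-1}u^{1/2}x$, the two differential identities
\begin{equation*}
\frac{d}{dt}\tfrac{1}{2}\!\int\!|\Lsm u|^2\,\d x = -\!\int\! u|\nabla p|^2\,\d x,\qquad
\frac{d}{dt}\tfrac{1}{2}\!\int\! |x|^2 u\,\d x = \tfrac{d+2s}{2}\!\int\!|\Lsm u|^2\,\d x,
\end{equation*}
the latter being precisely \eqref{eq:smooth_second}, which rests on Lemma \ref{lem:fourier}. Differentiating $E(u(t))$ in time and collecting the two contributions that are linear in $\int|\Lsm u|^2$ by means of $1-2\beta=\beta(d+2s)$ yields
\begin{equation*}
\frac{d}{dt}E(u(t)) = (1-2\beta)(1+t)^{-2\beta}\!\!\int\!|\Lsm u|^2\,\d x - (1+t)^{1-2\beta}\!\!\int\! u|\nabla p|^2\,\d x - \beta^2(1+t)^{-1-2\beta}\!\!\int\!|x|^2 u\,\d x.
\end{equation*}

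Next I would expand the dissipation term. Since in the smooth case $G=u^{1/2}(\nabla p+\beta(1+t)^{-1}x)$, one has
\begin{equation*}
(1+t)^{1-2\beta}|G|^2 = (1+t)^{1-2\beta}u|\nabla p|^2 + 2\beta(1+t)^{-2\beta}u(x\cdot\nabla p) + \beta^2(1+t)^{-1-2\beta}u|x|^2.
\end{equation*}
Integrating by parts and using \eqref{eq:fourier_nice} together with $\int up\,\d x=\int|\Lsm u|^2\,\d x=:A$ gives
\begin{equation*}
\int\! u(x\cdot\nabla p)\,\d x = -d\!\int\! up\,\d x - \!\int\! p(x\cdot\nabla u)\,\d x = -dA+\tfrac{d-2s}{2}A = -\tfrac{d+2s}{2}A.
\end{equation*}
Summing the two displays and using once more the algebraic identity $1-2\beta=\beta(d+2s)$, the remaining terms cancel exactly and we find
\begin{equation*}
\frac{d}{dt}E(u(t)) + (1+t)^{1-2\beta}\!\int\!|G|^2\,\d x = 0,
\end{equation*}
which integrated on $(0,t)$ gives the identity underlying \eqref{eq:entropy_diss} with equality at the smooth level.

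The main obstacle is the rigorous justification at the level of weak solutions. I would carry out the calculation above on the approximate problem \eqref{eq:approx_prob_strong} of Subsection \ref{ss:approx_problem}: there $u_\varepsilon$, $p_\varepsilon$ and $m_\varepsilon(u_\varepsilon)$ are regular enough that the integration by parts and the Pohozaev identity are entirely legitimate, so one obtains the exact equality \eqref{eq:entropy_diss} with $\xi_\varepsilon := m_\varepsilon(u_\varepsilon)^{1/2}\nabla p_\varepsilon$ and $G_\varepsilon := \xi_\varepsilon + \beta(1+t)^{-1}m_\varepsilon(u_\varepsilon)^{1/2}x$. Passing to the limit $\varepsilon\to 0$, weak convergence in $L^2_{\mathrm{loc}}$ of $\xi_\varepsilon\rightharpoonup \xi$ (as identified in Subsection \ref{ss:limit_proc}) together with the strong convergence of the zero-order first-moment correction yields $G_\varepsilon\rightharpoonup G$ with $G$ satisfying the distributional characterization \eqref{eq:defG}; the weighted $L^2$-norm being weakly lower semicontinuous, the equality degrades to the desired inequality. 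The delicate point, which is precisely where the assumption \eqref{eq:extra_assumption} is used, is to ensure that the Pohozaev identity \eqref{eq:fourier_nice} passes to the limit so that the second-moment estimate of Lemma \ref{lem:sec_mom} and hence the $E$-balance retain meaning for the limiting solution $u$.
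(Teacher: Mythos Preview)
Your algebraic computation at the formal level is correct and is essentially the same calculation the paper performs: differentiate the two pieces of $E$, use $1-2\beta=\beta(d+2s)$, and evaluate the cross term via the Pohozaev identity. The difference lies entirely in the rigorous justification, and here your proposal has a real gap.

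At the level of the approximate problem \eqref{eq:approx_prob_strong} the mobility is $m_\eps(u_\eps)=u_\eps^{+}+\eps$, not $u_\eps$. The second-moment derivative then reads
\[
\frac{d}{dt}\,\tfrac12\!\int_{\Rd}|x|^{2}u_\eps\,\d x
= -\!\int_{\Rd} m_\eps(u_\eps)\,(x\cdot\nabla p_\eps)\,\d x
= \int_{\Rd}\! p_\eps\big[\mathscr{H}(u_\eps)(x\cdot\nabla u_\eps)+d\,m_\eps(u_\eps)\big]\,\d x,
\]
which does \emph{not} reduce to $\tfrac{d+2s}{2}\int|\Lsm u_\eps|^{2}$: the factor $\mathscr{H}(u_\eps)$ spoils the Pohozaev identity, and $\int p_\eps\,m_\eps(u_\eps)\neq\int|\Lsm u_\eps|^{2}$. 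Likewise, with your definition $G_\eps=\xi_\eps+\beta(1+t)^{-1}m_\eps(u_\eps)^{1/2}x$ the quadratic expansion produces $\beta^{2}(1+t)^{-1-2\beta}\!\int m_\eps(u_\eps)|x|^{2}$, while the time derivative of the moment term contains $\int u_\eps|x|^{2}$; the mismatch $m_\eps(u_\eps)-u_\eps=\eps+u_\eps^{-}$ leaves a residual term that you do not control. So the claim that ``one obtains the exact equality \eqref{eq:entropy_diss}'' at the $\eps$-level is not justified as written; several error terms must be shown to vanish, and you would also need to verify that $x\cdot\nabla u_\eps\in L^{2}(\Rd)$ in order to invoke Lemma~\ref{lem:fourier} at that level, which the construction does not provide.

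The paper avoids all of this by working directly with the weak solution $u$. It does \emph{not} go back to the $\eps$-problem: it uses the energy estimate \eqref{eq:energy_est} and the second-moment estimate of Lemma~\ref{lem:sec_mom} as differential \emph{inequalities} (in the sense of distributions) for $u$ itself, differentiates $E(u(t))$ accordingly, and then handles the cross term by invoking the characterization $u^{1/2}\xi=\nabla(pu)-p\nabla u$ from \eqref{eq:def_xi} together with the consequence \eqref{eq:fourier2} of Lemma~\ref{lem:fourier} (this is exactly where assumption \eqref{eq:extra_assumption} enters). This route bypasses the mobility mismatch entirely and yields \eqref{eq:entropy_diss} without any further limiting procedure.
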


\begin{proof}[Proof of Proposition \ref{prop:entropy_dissipation_v}]
The proof of this Proposition requires to control the second moments of $u$. We recall that Theorem \ref{th:existence} shows that weak solutions
satisfy the energy estimate
\begin{equation}
\label{eq:energy_est_2}
\frac{1}{2}\int_{\Rd}\vert \Lsm u(t)\vert^2 \d x +
\int_{0}^t\int_{\Rd}\xi^2 \d x \d r
 \le
\frac{1}{2}\int_{\Rd}\vert \Lsm u_0\vert^2 \d x\,\,\hbox{ for a.a. } t \in (0,+\infty),
\end{equation}
where the vector field ${\bf\xi}\in L^2(0,+\infty;L^2(\Rd))$
 satisfies
\begin{equation}
\label{eq:def_xi_2}
  \nabla(u p) - p\nabla u = u^{1/2}\xi \,\,\,\,\,\hbox{ almost everywhere in }\Rd\times (0,+\infty).
\end{equation}

We note that \eqref{eq:second_moment} and \eqref{eq:energy_est_2} imply that (actually, both estimates can be shown to hold for almost any $\tau\le t$), respectively,
\[
\frac{\d}{\d t} \frac{\beta}{2}\int_{\Rd}\vert x\vert^2 u(x,t)\d x \le \int_{\Rd}\vert \Lsm u\vert ^s \d x,
\]
and
\[
\frac{\d}{\d t} \frac{1}{2} \int_{\Rd}\vert \Lsm u\vert ^s \d x \le - \int_{\Rd}\vert \xi\vert^2 \d x,
\]
in the sense of distributions on $(0,+\infty)$.

We compute $\frac{\d }{\d t} E(t)$.
We have
\begin{align*}
\frac{\d }{\d t}\Big( \frac{(1+t)^{1-2\beta}}{2}\int_{\Rd}\vert\Lsm u(x,t)\vert^2\d x\Big)
&\le \frac{1-2\beta}{2} (1+t)^{-2\beta}\int_{\Rd}\vert\Lsm u(x,t)\vert^2\d x\nonumber
\\
&-(1+t)^{1-2\beta}\int_{\Rd}\vert \xi\vert^2\d x,
\end{align*}
and
\begin{align*}
\frac{\d}{\d t}\Big(\frac{\beta (1+t)^{-2\beta}}{2}\int_{\Rd}\vert x\vert^2 u(x,t)\d x\Big)
& \le -\beta^2(1+t)^{-2\beta-1}\int_{\Rd}\vert x\vert^2 u\d x \nonumber\\
&+ (1+t)^{-2\beta}\beta\frac{d+2s}{2}\int_{\Rd}\vert \Lsm u\vert^2\d x.
\end{align*}
Therefore,
\begin{align}
\label{eq:new_energy1}
\frac{\d }{\d t} E(t) &\le (1+t)^{-2\beta}\Big(\frac{1-2\beta}{2}+\beta\frac{d+2s}{2}\Big)\int_{\Rd}\vert \Lsm u\vert^2\d x\nonumber\\
&-(1+t)^{1-2\beta}\int_{\Rd}\vert \xi\vert^2\d x - \beta^2(1+t)^{-2\beta-1}\int_{\Rd}\vert x\vert^2 u\d x\nonumber\\
& = \beta(d+2s)(1+t)^{-2\beta}\int_{\Rd}\vert \Lsm u\vert^2\d x\nonumber\\
&-(1+t)^{1-2\beta}\int_{\Rd}\vert \xi\vert^2\d x - \beta^2(1+t)^{-2\beta-1}\int_{\Rd}\vert x\vert^2 u\d x,
\end{align}
where we used that
\[
\frac{1-2\beta}{2\beta} = \frac{d+2s}{2}
\]
We concentrate on the last two terms.
We have that
\begin{eqnarray*}
(1+t)^{1-2\beta}\int_{\Rd}\vert \xi\vert^2\d x + \beta^2(1+t)^{-2\beta-1}\int_{\Rd}\vert x\vert^2 u\d x\\
=(1+t)^{1-2\beta}\Big(\int_{\Rd}\vert \xi\vert^2\d x + \beta^2(1+t)^{-2}\int_{\Rd}\vert x\vert^2 u\d x\Big)\\
=(1+t)^{1-2\beta}\int_{\Rd}\vert \xi + \beta(1+t)^{-1}x u^{1/2}\vert^2 \d x -
2\beta(1+t)^{-2\beta}\int_{\Rd}u^{1/2}\xi\cdot x \d x.
\end{eqnarray*}
Now, the definition of $\xi$ gives that $u^{1/2}\xi = \nabla (pu) - p\nabla u$. Moreover, since $\xi$ in $L^2(\Rd)$ for a.a. $t$ and $u^{1/2}x \in L^2(\Rd)$
for a.a. $t$, there holds that
$u^{1/2}x\cdot \xi = (\nabla (pu) - p\nabla u)\cdot x\in L^1(\Rd)$. Therefore,  thanks to Lemma \ref{lem:fourier}
%\eqref{eq:extra_assumption}
 we have
\begin{align*}
-
2\beta(1+t)^{-2\beta}\int_{\Rd}u^{1/2}\xi\cdot x \d x
& = -2\beta (1+t)^{-2\beta}\int_{\Rd}(\nabla (pu) - p\nabla u)\cdot x\d x\\
& =
\beta(d+2s)(1+t)^{-2\beta}\int_{\Rd}\vert \Lsm u\vert \d x.
\end{align*}
Therefore \eqref{eq:new_energy1} becomes
\begin{align}
\label{eq:new_energy}
\frac{\d }{\d t} E(t) &\le -(1+t)^{1-2\beta}\int_{\Rd}\vert \xi + \beta(1+t)^{-1}x u^{1/2}\vert^2 \d x,
\end{align}
that is the thesis.
\end{proof}

Now we prove the validity of the key equality \eqref{eq:fourier_nice}.
\begin{lemma}
\label{lem:fourier}
Let $u\in H^{1+s}(\Rd)$ satisfying \eqref{eq:extra_assumption}.
%$u\in \mathcal{S}(\Rd)$.
 Then
 % setting $p=\Ls u$, there holds
\begin{equation}
\label{eq:fourier1}
\frac{d-2s}{2}\int_{\Rd}\vert \Lsm u\vert^2 \d x
= -\int_{\Rd}p( x\cdot\nabla u)\d x
\end{equation}
\end{lemma}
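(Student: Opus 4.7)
The plan is to pass to Fourier variables, where both sides of \eqref{eq:fourier1} become explicit and the identity reduces to the Euler-type relation $\mathrm{div}_\xi(|\xi|^{2s}\xi)=(d+2s)|\xi|^{2s}$ for the symbol of $\Ls$. In spirit this is a Pohozaev-type identity, reflecting the $(2s-d)$-homogeneity of $\|\Lsm u\|_{L^2}^2$ under the scaling $u\mapsto u(\lambda\,\cdot\,)$; viewed this way, one would get the identity by computing $\frac{d}{d\lambda}\big|_{\lambda=1}\|\Lsm u_\lambda\|^2_{L^2}$ in two ways and using symmetry of $\Ls$.

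Concretely, using the identities $\widehat{\partial_j u}(\xi)=i\xi_j\hat u(\xi)$ and $\widehat{x_j\phi}(\xi)=i\partial_{\xi_j}\hat\phi(\xi)$, I first compute
\[
\widehat{x\cdot\nabla u}(\xi)=-d\,\hat u(\xi)-\xi\cdot\nabla_\xi\hat u(\xi),
\]
and then apply Parseval (using $\hat p(\xi)=|\xi|^{2s}\hat u(\xi)$) to obtain
\[
\int_{\Rd}p\,(x\cdot\nabla u)\,\d x = -d\int_{\Rd}|\xi|^{2s}|\hat u|^2\,\d\xi-\int_{\Rd}|\xi|^{2s}\hat u\,\xi\cdot\nabla_\xi\overline{\hat u}\,\d\xi.
\]
Since the left-hand side is real, I take real parts on the right and use the pointwise identity $\xi\cdot\nabla_\xi|\hat u|^2=2\,\mathrm{Re}\big(\hat u\,\xi\cdot\nabla_\xi\overline{\hat u}\big)$ to reduce the second term to $\tfrac12\int_{\Rd}|\xi|^{2s}\xi\cdot\nabla_\xi|\hat u|^2\,\d\xi$. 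An integration by parts in $\xi$ against the radial vector field $|\xi|^{2s}\xi$, combined with $\mathrm{div}_\xi(|\xi|^{2s}\xi)=(d+2s)|\xi|^{2s}$, gives
\[
\int_{\Rd}|\xi|^{2s}\xi\cdot\nabla_\xi|\hat u|^2\,\d\xi = -(d+2s)\int_{\Rd}|\xi|^{2s}|\hat u|^2\,\d\xi,
\]
and collecting the pieces yields $\int p(x\cdot\nabla u)\,\d x = \big(-d+\tfrac{d+2s}{2}\big)\|\Lsm u\|^2_{L^2}=-\tfrac{d-2s}{2}\|\Lsm u\|^2_{L^2}$, which is \eqref{eq:fourier1}.

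The main obstacle is the rigorous justification of these Fourier-side manipulations at the low regularity $u\in H^{1+s}(\Rd)$ with only \eqref{eq:extra_assumption} and no a priori pointwise decay. The cleanest route is to perform the whole computation first for Schwartz functions, where everything is absolutely convergent and classical, and then argue by density: the left-hand side of \eqref{eq:fourier1} is continuous in the norm $\|\Lsm u\|_{L^2}$ (controlled by $\|u\|_{H^{1+s}}$), while the right-hand side is dominated by $\|p\|_{L^2}\|x\cdot\nabla u\|_{L^2}\le C\|u\|_{H^{2s}}\|x\cdot\nabla u\|_{L^2}\le C\|u\|_{H^{1+s}}\|x\cdot\nabla u\|_{L^2}$ since $2s\le 1+s$ for $s\in(0,1)$. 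The hypothesis \eqref{eq:extra_assumption}, $x\cdot\nabla u\in L^2$, enters here precisely: via the Fourier identification above it is equivalent to $\xi\cdot\nabla_\xi\hat u\in L^2$, which is exactly what is needed to make the integrand $|\xi|^{2s}\hat u\,\xi\cdot\nabla_\xi\overline{\hat u}$ integrable (by Cauchy--Schwarz against the weight $|\xi|^{2s}$) and to ensure the vanishing of boundary contributions after a smooth cutoff $\chi_{\{\varepsilon<|\xi|<R\}}$ as $\varepsilon\to 0$ and $R\to\infty$ in Fourier space. This is the quantitative version of the ``suitable decay at infinity'' remark preceding the lemma.
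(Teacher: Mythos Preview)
Your proposal is correct and takes essentially the same approach as the paper: both pass to Fourier variables, use the identity $\widehat{x\cdot\nabla u}=-d\hat u-\xi\cdot\nabla_\xi\hat u$ together with Plancherel and $\hat p=|\xi|^{2s}\hat u$, and then integrate by parts against $\mathrm{div}_\xi(|\xi|^{2s}\xi)=(d+2s)|\xi|^{2s}$ to arrive at the stated identity. Your version is in fact slightly more careful than the paper's, which writes $\hat u^2$ in place of $|\hat u|^2$ and does not spell out the density/cutoff argument you sketch; the paper just performs the computation formally under the assumption \eqref{eq:extra_assumption}.
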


\begin{proof}[Proof of Lemma \ref{lem:fourier}]
We let
$u$ as in \eqref{eq:extra_assumption}.
Integration by parts gives, for any $v\in \mathcal{S}(\Rd)$ (hence in $\mathcal{S}'(\Rd)$), that
\[
\widehat{x_j \frac{\partial v}{\partial x_j}}=\int_{\Rd}e^{-i\xi\cdot x}x_j \frac{\partial v}{\partial x_j} \d x =
 i \xi_j \widehat{x_j v} - \hat{v} = -\xi_j \frac{\partial \hat{v}}{\partial \xi_j}-\hat{u}.
\]
Therefore,
\begin{align}
\label{eq:Fourier1}
\widehat{(x,\nabla v)} = \sum_{j=1}^d \widehat{x_j \frac{\partial v}{\partial x_j}}
= \sum_{j=1}^d\Big(-\xi_j \frac{\partial \hat{v}}{\partial \xi_j}-\hat{v}\Big)=-\div(\xi \hat{v}).
\end{align}
The Plancherel identify furnishes
\[
\int_{\Rd}\vert \Lsm u \vert^2 \d x = \int_{\Rd}p u \d x = \int_{\Rd}\hat p\hat u \d \xi.
\]
Therefore, since $\hat p = \vert \xi\vert^{2s} \hat u$ and
$\vert \xi\vert^{2s}=\frac{1}{d+2s}\div(\xi\vert \xi\vert^{2s})$,
we have
\[
\int_{\Rd}\vert \Lsm u\vert^2 \d x =\frac{1}{d+2s}
 \int_{\Rd}\div(\xi \vert \xi\vert^{2s})\hat u^2\d \xi =
 -\frac{2}{d+2s}\int_{\Rd}\vert \xi\vert^{2s}\hat u(\xi\cdot\nabla \hat u)\d \xi.
\]
Now, $(\xi,\nabla \hat u) = \div(\xi\hat u) - d \hat u= -\widehat{(x,\nabla u)} - d\hat{u}$. Therefore
\begin{align*}
\int_{\Rd}\vert \Lsm u\vert^2 \d x &=-\frac{2}{d+2s}\int_{\Rd}\vert \xi\vert^{2s}\hat u(\xi\cdot\nabla \hat u)\d \xi\\
&= \frac{2}{d+2s}\int_{\Rd}p (x,\nabla u) \d x + \frac{2d}{d+2s}\int_{\Rd}\vert \Lsm u\vert^2 \d x,
\end{align*}
namely
\[
\frac{d-2s}{2}\int_{\Rd}\vert \Lsm u\vert^2 \d x = -\int_{\Rd}p (x,\nabla u) \d x.
\]
An important consequence of \eqref{eq:fourier1} is that, if we know that
\[
(\nabla (up) - p\nabla u)\cdot x \in L^1(\Rd),
\]
there holds that
\begin{equation}
\label{eq:fourier2}
 \frac{1}{2}\int_{\Rd}\vert \Lsm u\vert^2 \d x= -\frac{1}{d+2s}\int_{\Rd}\big((\nabla (pu) - p\nabla u\big)\cdot x\d x.
\end{equation}
In fact,
\begin{align*}
-\int_{\Rd}\big((\nabla (p u) - p\nabla u\big)\cdot x\d x
&=d\int_{\Rd} p u \d x + \int_{\Rd}p (x\cdot\nabla u) \d x\\
& =d\int_{\Rd}\vert \Lsm u\vert^2 \d x -  \frac{d-2s}{2}\int_{\Rd}\vert \Lsm u\vert^2 \d x\\
& =\frac{d+2s}{2}\int_{\Rd}\vert \Lsm u\vert^2 \d x.
\end{align*}

We conclude by noting that, interestingly, by mimicking the proof above we can prove the
following
\begin{lemma}
\label{lemma:key_fract_pm}
Let $u\in H^{s}(\Rd)$ satisfying \eqref{eq:extra_assumption}. Then, denoting
with $p = (-\Delta)^{-s} u$ there holds
\begin{equation}
\label{eq:key_fract_pm}
\frac{d-2s}{2}\int_{\Rd}\vert (-\Delta)^{-s/2} u\vert^2 \d x
= -\int_{\Rd}u( x\cdot\nabla p)\d x
\end{equation}
\end{lemma}
Therefore, at least formally, we have that smooth solutions of the
Fractional Porous medium equation (see \cite{CV2011})
\[
\begin{cases}
\partial u = \div(u\nabla p),\,\,\,\hbox{ in } \Rd\times (0,+\infty),\\
\Ls p = u,\,\,\,\hbox{ in } \Rd\times (0,+\infty),
\end{cases}
\]
satisfy
\begin{equation}
\label{eq:sec_mom_fract_pm}
\frac{d}{dt}\frac{1}{2}\int_{\Rd}\vert x\vert^2 u\d x = \frac{d-2s}{2}\int_{\Rd}\vert (\Delta)^{-s/2} u\vert^2 \d x
\end{equation}
\end{proof}

\noindent {\bf Asymptotic behaviour}.
Now we come to the long time analysis.
As we saw in Section \ref{sec:self_similar}, given a smooth solution of \eqref{eq:prob_intro1} the rescaled solution $v$ according to \eqref{eq:source_type_form}  solves the Fokker-Planck type equation
\begin{equation}
\label{eq:FP}
\begin{cases}
\partial_\tau v -\div_{y}\Big(v\nabla_y \big(w + \frac{\beta}{2}\vert y\vert^2\big)\Big)= 0,\\
w = \Ls v.
\end{cases}
\end{equation}
On the other hand, given $v$ a solution of \eqref{eq:FP}
the function $u$ defined by
\begin{equation}
\label{eq:vtou}
v(y,\tau) = e^{-\alpha\tau} u(y e^{\beta \tau}, e^{\tau}-1),
\end{equation}
with $\alpha$ and $\beta$ as in \eqref{eq:alpha/beta},
is a solution of \eqref{eq:prob_intro1}.

It is easy to show that given a weak solution of \eqref{eq:prob_intro1} the rescaled solution $v$ according to \eqref{eq:source_type_form} is a weak solution of the Fokker-Planck equation \eqref{eq:FP} in the following sense
\begin{defn}
\label{def:weak_v}{\sl
Given $v_0 \in L^1_{loc}(\Rd)$ and nonnegative,
we say that $v$ is a weak solution of
 \eqref{eq:prob_intro1} if
 \begin{enumerate}
 \item $v\ge 0$ a.e. on $\Rd\times (0,+\infty),$
 \item $v\in L^\infty(0,+\infty; H^{s}(\Rd))\cap L^2(0,+\infty;H^{1+s}(\Rd)),$
 \item $\Ls v
 %$\tilde w=
 %\Ls v + \frac{\beta}{2}\vert y\vert^2
 \in L^2(0,+\infty;H^{1-s}(\Rd)),$
 \item The following relation holds for any test
function $\varphi\in C^{\infty}_{c}(\Rd\times [0,+\infty))$
\begin{align}
\label{eq:weak_formulation_FP}
-\int_{0}^{+\infty}\int_{\Rd} v\partial_t \vf \d y \d \tau
& -\int_{0}^{+\infty}\int_{\Rd}\tilde w v \Delta \vf \,\d y \d \tau \nonumber\\
&-\int_{0}^{+\infty}\int_{\Rd} \tilde w\nabla  v \cdot\nabla \vf \,\d y \d \tau =  \int_{\Rd} v_0 \vf(y,0) \d y\nonumber\\
 w &= \Ls v,\,\,\tilde w = w +\frac{\beta}{2}\vert y\vert^2\,\hbox{ a.e. in } \Rd\times(0,+\infty).
\end{align}
\end{enumerate}
}

\end{defn}

The weak solutions of \eqref{eq:FP} that are obtained from rescaling
the weak solutions of the thin film equation obtained in Theorem \ref{th:existence} enjoy similar estimates.
We have the following

%In the following Proposition we show that starting from a weak solution given by Theorem \ref{th:existence} then the function $v$ given by \eqref{eq:source_type_form} is indeed a weak solution in the sense specified above that satisfies the analogous of the estimates \eqref{eq:entropy_est} and \eqref{eq:energy_est}.
%
\begin{prop}
\label{prop:est_v}
Let $u_0:\Rd\to \R$ be a measurable function such that
\begin{align}
\label{eq:mass_longu0}
&\int_{\Rd}u_0(x)\d x = M,\\
&\mathscr{F}(u_0) = \int_{\Rd}u_0 \log u_0 \d x <+\infty,\label{eq:entropy_longu0}\\
&\mathscr{E}(u_0):=\int_{\Rd}\vert \Lsm u_0\vert^2 \d y+ \int_{\Rd}\vert y\vert^2 u_0\d y <+\infty.\label{eq:energy_longu0}
\end{align}
Let $u$ be a weak solution given by Theorem \ref{th:existence} with $u_0:\Rd\to \R$ as initial condition.
We let $v$ be the corresponding weak solution of \eqref{eq:FP}
obtained from rescaling $u$ as in \eqref{eq:source_type_form}.
Then,
\begin{enumerate}
\item {\sl Mass Conservation}
\begin{equation}
\label{eq:cons_mass_v}
\int_{\Rd}v(y,\tau)\d y = \int_{\Rd}v(y,0) \d y = \int_{\Rd}u_0(x)\d x\,\,\,\,\,\hbox{ for a.a. } \tau \in (0, +\infty),
\end{equation}
\item {\sl Entropy estimate.}
\begin{equation}
\label{eq:entropy_estv}
\mathscr{F}(v(\tau)) + \int_{0}^\tau\int_{\Rd}\vert \Lsm(\nabla_y v)\vert^2\d y \d r
\le \mathscr{F}(u_0) + \alpha M\tau, \,\,\,\forall \tau\ge 0,
\end{equation}
where, as in \eqref{eq:entropy_est},
\[
\mathscr{F}(v):=\int_{\Rd}v\log v \d y,
\]
\item {\sl Energy Estimate.}\,\,If $u$ verifies \eqref{eq:extra_assumption},
\begin{equation}
\label{eq:energy_estv}
\mathscr{E}(v(\tau)) +\int_{0}^{+\infty}\int_{\Rd}\vert H\vert^2
\d r \d y \le \mathscr{E}(v(0)),\,\,\,\forall \tau\ge 0,
\end{equation}
where the vector field $H\in L^2(0,+\infty;L^2(\Rd))$ is given by
\begin{equation}
\label{eq:defH}
\nabla\Big((w+\frac{\beta}{2}\vert y\vert^2)v\Big) -
 \Big(w + \frac{\beta}{2}\vert y\vert^2\Big)\nabla v= v^{1/2}H\,\,\,\hbox{ a.e. in }\Rd\times (0,+\infty).
\end{equation}
\end{enumerate}
\end{prop}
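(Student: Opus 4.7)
The three properties are transferred from the corresponding properties of $u$ already established in Theorem \ref{th:existence} and Proposition \ref{prop:entropy_dissipation_v} through the change of variables \eqref{eq:vtou}, namely $v(y,\tau)=(1+t)^\alpha u((1+t)^\beta y,t)$ with $\tau=\log(1+t)$. The heart of the argument is a careful scaling bookkeeping, exploiting the two algebraic relations $\alpha=\beta d$ and $\alpha+2\beta(1+s)=1$ that characterize $(\alpha,\beta)$ in \eqref{eq:alpha/beta}. As preliminary input I would record three scaling laws:
\begin{equation*}
\int_{\Rd} v(y,\tau)\,\d y = (1+t)^{\alpha-\beta d}\int_{\Rd}u(x,t)\,\d x,
\quad
\int_{\Rd}|\Lsm v|^2\d y = (1+t)^{1-2\beta}\int_{\Rd}|\Lsm u|^2\d x,
\end{equation*}
\begin{equation*}
\int_{\Rd}|\xi|^{2+2s}|\hat v|^2\d\xi = (1+t)\int_{\Rd}|\xi|^{2+2s}|\hat u|^2\d\xi,
\quad
\int_{\Rd}|y|^2 v\,\d y = (1+t)^{-2\beta}\int_{\Rd}|x|^2 u\,\d x,
\end{equation*}
all of them verified either by the substitution $x=(1+t)^\beta y$ or via Plancherel on the Fourier side together with the symbol of $\Lsm$. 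Combined with $\d\tau=\d t/(1+t)$, these laws produce the desired transfer rules.

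\textbf{Mass and entropy.} Mass conservation is immediate from the first scaling law since $\alpha=\beta d$, combined with \eqref{eq:mass_cons}. For the entropy inequality, I would compute $v\log v=(1+t)^\alpha u\bigl[\alpha\log(1+t)+\log u\bigr]$ and integrate in $y$; using $\alpha=\beta d$, this yields the identity $\mathscr F(v(\tau))=\mathscr F(u(t))+\alpha M\tau$. For the dissipation term, the third scaling law above (applied to $\nabla v$ in place of $v$ componentwise) gives $\int_{\Rd}|\Lsm(\nabla_y v)|^2\d y=(1+t)\int_{\Rd}|\Lsm(\nabla u)|^2\d x$, which combined with $\d\sigma=\d t/(1+t)$ shows that the double integrals in $(y,\sigma)$ and $(x,r)$ variables are \emph{equal}. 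Inserting these two identities into \eqref{eq:entropy_est} yields \eqref{eq:entropy_estv}.

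\textbf{Energy.} The key observation is that the functional $E(u(t))$ from Proposition \ref{prop:entropy_dissipation_v} coincides with $\mathscr E(v(\tau))$ when the latter is read with the weights $\tfrac12$ and $\tfrac\beta2$ suggested by the definition of $E$: the second and fourth scaling laws above show that the two prefactors $(1+t)^{1-2\beta}$ and $(1+t)^{-2\beta}$ get absorbed exactly. To handle the dissipation I would define $H$ via the rescaling of $G$ from \eqref{eq:defG}; the bracket $p+(1+t)^{-1}\tfrac\beta2|x|^2$ transforms into $(1+t)^{2\beta-1}(w+\tfrac\beta2|y|^2)$ precisely because $\alpha+2\beta(1+s)=1$, so the left-hand side of \eqref{eq:defH} appears as the $(y,\tau)$-rendering of \eqref{eq:defG} up to an explicit power of $(1+t)$. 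A short computation of Jacobians then gives $\int_0^\tau\!\!\int_{\Rd}|H|^2\d y\,\d\sigma=\int_0^t(1+r)^{1-2\beta}\!\int_{\Rd}|G|^2\d x\,\d r$, and \eqref{eq:energy_estv} follows from \eqref{eq:entropy_diss}. The hypothesis \eqref{eq:extra_assumption} is needed exactly here, since Proposition \ref{prop:entropy_dissipation_v} relies on it.

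\textbf{Main obstacle.} The computations above are, individually, straightforward; the delicate point is the energy part, where one must rigorously identify the vector field $H$ via \eqref{eq:defH} rather than treat it only formally. My plan is to first derive the pointwise rescaling relation $H(y,\tau)=(1+t)^{\alpha/2+1-\beta}G(x,t)$ on the set $\{u>0\}=\{v>0\}$, then check that the distributional identity \eqref{eq:defH} holds by rescaling the distributional identity \eqref{eq:defG} against test functions composed with the change of variables. The same pull-back argument, applied to the weak formulation \eqref{eq:weak_formulation} itself, verifies that the rescaled $v$ is a weak solution of the Fokker--Planck equation in the sense of Definition \ref{def:weak_v}, closing the proof.
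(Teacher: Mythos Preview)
Your proposal is correct and follows exactly the route the paper takes: the paper's own proof is a three-line remark that \eqref{eq:cons_mass_v} and \eqref{eq:entropy_estv} come from rescaling \eqref{eq:mass_cons} and \eqref{eq:entropy_est}, while \eqref{eq:energy_estv} comes from rescaling \eqref{eq:entropy_diss}. You have supplied the scaling computations that the paper omits, and your identification of $E(u(t))$ with the weighted version $\tfrac12\int|\Lsm v|^2\d y+\tfrac\beta2\int|y|^2 v\,\d y$ (and of the $G$-dissipation with the $H$-dissipation) is precisely what underlies the paper's one-line justification.
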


\begin{proof}
The conservation of mass, the entropy and energy estimates follow by rescaling using
\eqref{eq:source_type_form}. More precisely,
to obtain \eqref{eq:cons_mass_v} and \eqref{eq:entropy_estv} we simply rescale the analogous estimates \eqref{eq:mass_cons} and \eqref{eq:entropy_est} for $u$. To obtain estimate \eqref{eq:energy_estv}, we just rescale \eqref{eq:entropy_diss}.
\end{proof}

The first step in the long time analysis is the following Theorem in which we prove that the set of the cluster points for large times
of the weak solutions to \eqref{eq:FP} is not empty and its elements are indeed stationary solutions.
More precisely, we set
\begin{equation}
\label{eq:omega_lim}
\omega(v):=\left\{v_{\infty}\in H^s(\Rd)\cap L^1(\Rd, (1+\vert y\vert^2)\mathscr{L}^d): \exists \tau_n\nearrow +\infty \hbox{ with } v(\tau_n)\xrightarrow{n\to +\infty}v_{\infty} \hbox{ in } L^1(\Rd)\right\}
\end{equation}
(here $\mathscr{L}^d$ denotes the $d$-dimensional Lebesgue measure) and we prove the following Theorem
\begin{theor}
\label{th:long_time1}
Let us take an initial condition  $u_0$ satisfying \eqref{eq:mass_longu0}-\eqref{eq:energy_longu0} and a weak solution $u$ given by Theorem \ref{th:existence} with $u_0:\Rd\to \R$ as initial condition and satisfying \eqref{eq:extra_assumption}.
Then, denoting with
$v$ the weak (rescaled) solution of \eqref{eq:source_type_form} in the sense of Definition \ref{def:weak_v},
given a sequence of times $\left\{\tau_n\right\}$ such that $\tau_n\nearrow +\infty$, there exists a not relabelled subsequence ({that we still denote with $\tau_n$)} and a function
\begin{equation}
\label{eq:vinfty}
v_\infty \in H^{1+s}(\Rd)\cap L^1(\Rd, (1+\vert y\vert^2)\mathscr{L}^d)
\end{equation}
such that
\begin{equation}
\label{eq:conv_long_time}
v(\tau_n) \xrightarrow{n\to +\infty} v_\infty\,\,\,\,\hbox{ strongly in } {L^1(\Rd)},
\end{equation}
namely $v_\infty\in\omega(v)$.
Moreover,
any $v_\infty\in \omega(v)$ is a weak stationary solution of \eqref{eq:FP} that is,
\begin{equation}
\label{eq:weak_stationary}
\begin{cases}
\nabla\Big((w_\infty+\frac{\beta}{2}\vert y\vert^2)v_\infty\Big) -
 \Big(w_\infty + \frac{\beta}{2}\vert y\vert^2\Big)\nabla v_\infty = 0,\,\,\,\,\,\,\,\hbox{ in }\Rd\\
\Ls v_\infty = w_\infty\,\,\,\hbox{ in }\Rd.
 \end{cases}
\end{equation}
\end{theor}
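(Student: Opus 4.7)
The plan proceeds in three stages. For the first stage, I will extract an $L^1$-convergent subsequence from $\{v(\tau_n)\}$. The dissipation estimate \eqref{eq:energy_estv} yields $\mathscr{E}(v(\tau))\le \mathscr{E}(v(0))$ uniformly in $\tau$, hence simultaneous uniform bounds on the fractional energy $\|\Lsm v(\tau)\|_{L^2}^2$ and on the second moment $\int_{\Rd} |y|^2 v(\tau,y)\, \d y$. Combined with mass conservation \eqref{eq:cons_mass_v}, the former gives uniform boundedness of $v(\tau)$ in $H^s(\Rd)$ (upgrading $\dot{H}^s$ control to $H^s$ via Hardy-Littlewood-Sobolev and the $L^1$ bound), and the latter provides tightness at infinity. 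A Rellich-type compactness argument then produces, along a subsequence of any given $\tau_n \to +\infty$, $v(\tau_n)\to v_\infty$ strongly in $L^1(\Rd)$ and a.e., with $v_\infty \ge 0$, $v_\infty \in H^s(\Rd)\cap L^1(\Rd,(1+|y|^2)\mathscr{L}^d)$, and $\|v_\infty\|_{L^1}=M$.

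The second stage uses a time-shift argument to identify the equation. Fix $T>0$ and define $v_n(y,r):=v(y,r+\tau_n)$ on $\Rd\times[0,T]$; by invariance under time translation $v_n$ is a weak solution of \eqref{eq:FP} in the sense of Definition \ref{def:weak_v}, with associated vector field $H_n(y,r):=H(y,r+\tau_n)$. Two facts follow from the global estimates: since $\int_0^{+\infty}\int_{\Rd}|H|^2\,\d y\,\d r\le \mathscr{E}(v(0))<+\infty$, we have
\begin{equation*}
\int_0^T\!\!\int_{\Rd}|H_n|^2\,\d y\,\d r \;\xrightarrow{n\to\infty}\;0;
\end{equation*}
and, applying \eqref{eq:entropy_estv} on the interval $[\tau_n,\tau_n+T]$ together with Lemma \ref{lemma:entropy} (which bounds $|\mathscr{F}(v(\tau))|$ uniformly in $\tau$ from the uniform $L^1$, $L^{p^*}$ and second-moment controls established above), we obtain
\begin{equation*}
\int_0^T\!\!\int_{\Rd}|\Lsm(\nabla v_n)|^2\,\d y\,\d r\le C(T).
\end{equation*}
Consequently $v_n$ is uniformly bounded in $L^\infty(0,T;H^s(\Rd))\cap L^2(0,T;H^{1+s}(\Rd))$, while a comparison in \eqref{eq:FP} produces a uniform bound on $\partial_r v_n$ in a suitable negative Sobolev space, exactly as in the limit procedure of Theorem \ref{th:existence}.

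Aubin-Lions then yields, along a further subsequence, $v_n\to\tilde v$ strongly in $L^2(0,T;H^{1+s-\delta}_{\mathrm{loc}}(\Rd))$ for every $\delta>0$, pointwise a.e., and $w_n=\Ls v_n\to \Ls \tilde v$ weakly in $L^2(0,T;H^{1-s}(\Rd))$, with $\tilde v\in L^2(0,T;H^{1+s}(\Rd))$. The vanishing of the dissipation forces $v_n^{1/2}H_n\to 0$ in a distributional sense, so passing to the limit in the structural identity \eqref{eq:defH} for $v_n$ gives
\begin{equation*}
\nabla\bigl((\tilde w+\tfrac{\beta}{2}|y|^2)\tilde v\bigr)-(\tilde w+\tfrac{\beta}{2}|y|^2)\nabla \tilde v=0\quad\text{a.e.\ in }\Rd\times(0,T).
\end{equation*}
Feeding this back into the weak formulation \eqref{eq:weak_formulation_FP} for $v_n$ and passing to the limit forces $\partial_r\tilde v=0$, so $\tilde v(y,r)=\tilde v(y)$ is time-independent; evaluating at $r=0$ identifies $\tilde v$ with the $L^1$-limit $v_\infty$ from the first stage, which therefore inherits $H^{1+s}$ regularity and satisfies \eqref{eq:weak_stationary}.

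The main obstacle is the passage to the limit in \eqref{eq:defH}: it couples a strongly convergent $v_n$ with weakly convergent $w_n$ and $\nabla v_n$, against the unbounded weight $\frac{\beta}{2}|y|^2$. The strong $H^{1+s-\delta}_{\mathrm{loc}}$ convergence suffices on each ball, but handling the quadratic weight at infinity requires splitting integrals between $B_R$ and $\Rd\setminus B_R$ and using the uniform second-moment tightness to make the tails small uniformly in $n$; this is the technical heart of the argument and is ultimately what makes the extra decay hypothesis \eqref{eq:extra_assumption} enter the statement, since it is what guarantees the structural identity \eqref{eq:defH} in a form amenable to this limit passage.
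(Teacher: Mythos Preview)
Your proposal is correct and follows essentially the same three-stage approach as the paper: compactness of $v(\tau_n)$ in $L^1$ via the energy estimate and second-moment tightness, time-shift plus the entropy estimate and Aubin--Lions to get strong local $H^{1+s-\delta}$ convergence of $v_n$, and vanishing of the dissipation $\int|H_n|^2\to 0$ to identify the limit as a time-independent solution of \eqref{eq:weak_stationary}. One small correction to your last paragraph: the passage to the limit in \eqref{eq:defH} is tested against compactly supported functions, so the quadratic weight $\frac{\beta}{2}|y|^2$ is bounded on the support and poses no real obstacle; the extra assumption \eqref{eq:extra_assumption} actually enters upstream, in Proposition~\ref{prop:entropy_dissipation_v} (via the Pohozaev-type identity of Lemma~\ref{lem:fourier}), where it is needed to establish the energy estimate \eqref{eq:energy_estv} itself.
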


\begin{proof}
Let $\left\{\tau_n\right\}$ be fixed in such a way that $\tau_n\nearrow +\infty$. Let $v$ be a weak solution given by Proposition
\ref{prop:est_v}. Then,
\begin{align}
\label{eq:bound_massa}
\int_{\Rd}v(\tau_n)\d y &= M,\,\,\,\,\,\,\forall n\in \mathbb{N},\\
 \sup_n\left\{\int_{\Rd}\vert \Lsm v(\tau_n)\vert^2 \d y + \int_{\Rd}\vert y\vert^2 v(\tau_n)\d y\right\} &
 \le \int_{\Rd}\vert \Lsm u_0\vert^2 \d y+ \int_{\Rd}\vert y\vert^2 u_0\d y <+\infty.\label{eq:bound_energia}
\end{align}
Therefore, the fractional version of the Nash's Inequality (or using interpolation),
implies that
\[
\| v(\tau_n)\|_{L^2(\Rd)}\le c(d,s), \,\,\,\forall\tau\ge 0.
\]
%\[
%\| v(\tau)\|_{L^2(\Rd)}\le c(d,s)\| v(\tau)\|_{L^1(\Rd)}^{2s/(d+2s)}\| v(\tau)\|_{\dot{H}^s(\Rd)}^{d/(d+2s)},
%\]
and therefore, using in particular the uniform bound (see \eqref{eq:bound_energia}) on the second moment,
$\left\{v(\tau_n)\right\}$ is relatively
%\[
%\left\{ v(\tau), \,\,\tau\ge 0\right\} \,\,\,\hbox{ is bounded in } H^s(\Rd),
%\]
%hence relatively
compact in $L^2(\Rd)$ and in $L^1(\Rd)$ thanks to Rellich-Kondrachov, Dunford-Pettis and Vitali Theorems.
We let $v_\infty$ denote the limit
of $v(t_n)$. We have
\[
\int_{\Rd}v_\infty \d y = \int_{\Rd}u_0\d x,
\]
and
by semi-continuity, thanks to \eqref{eq:bound_energia},
\[
\mathscr{E}(v_\infty) {:=} \int_{\Rd}\vert \Lsm v_\infty\vert^2 \d y+ \int_{\Rd}\vert y\vert^2 v_{\infty}\d y<+\infty,
\]
namely \eqref{eq:vinfty} that says that
\[
\omega(v)\neq \emptyset.
\]
 It remains to show that the limit $v_\infty$ is indeed a stationary solution.
To this end, we standardly define $v_n(\cdot):=v(\cdot + \tau_n)$. For any $n\in \mathbb{N}$, $v_n$
is a weak solution in the sense of Definition \ref{def:weak_v}
with initial condition $v_n(0)=v(\tau_n)$.
Therefore $v_n$ satisfies both the estimates \eqref{eq:entropy_estv} and
 \eqref{eq:energy_estv}. The second one gives
\begin{align}
\label{eq:1_n}
\frac{1}{2}\Big(\int_{\Rd}\vert \Lsm v_n(\tau)\vert^2 \d y+ \int_{\Rd}\vert y\vert^2 v_n(\tau)\d y \Big) +
\int_{0}^\tau\int_{\Rd}\vert H_n\vert^2 \d y
\nonumber\\
\le \frac{1}{2}\Big(\int_{\Rd}\vert \Lsm v_n(0)\vert^2 \d y+ \int_{\Rd}\vert y\vert^2 v_n(0)\d y \Big), \,\,\,\,\forall \tau<+\infty
\end{align}
where
\[
\tilde{w}_n(\cdot):= \tilde w(\cdot +\tau_n), \,\,\,H_n(\cdot):=H(\cdot +\tau_n).
\]
Since \eqref{eq:energy_estv} is uniform with respect to $\tau$, we can bound the right hand side above with
\[
\frac{1}{2}\Big(\int_{\Rd}\vert \Lsm u_0\vert^2 \d y+ \int_{\Rd}\vert y\vert^2 u_0\d y \Big)
\]
and conclude that, uniformly with respect to $n$,
\begin{equation}
\label{eq:est1_n}
\|v_n\|_{L^\infty(0,+\infty; H^s(\Rd))} + \|v_n\|_{L^\infty(0,+\infty; L^1(\Rd,(1+\vert y\vert^2)\mathscr{L}^d))} \le C.
\end{equation}
Thus, using the Sobolev inequality, we conclude that
\begin{equation}
\label{eq:est11_n}
\|v_n\|_{L^\infty(0,+\infty; L^{p_s}(\Rd))} + \|v_n\|_{L^\infty(0,+\infty; L^1(\Rd,(1+\vert y\vert^2)\mathscr{L}^d))} \le C, \,\,\,p_s:=\frac{2d}{d-2s}.
\end{equation}
Consequently, using Lemma \ref{lemma:entropy} with $f(\cdot)=v_n(\cdot,\tau)$ for $\tau\ge 0$, we get
\begin{equation}
\label{eq:entropy_n}
\Big\vert\int_{\Rd}v_n(y, \tau) \log v_n(y,\tau)\d y \Big\vert\le C, \,\,\,\forall \tau \ge 0,
\end{equation}
where the constant $C$ depends only on the dimension $d$ and on $s$.
The entropy estimate \eqref{eq:entropy_estv} for $v_n$ reads ($M:=\int_{\Rd}v_n(y) \d y$)
\[
\mathscr{F}(v_n(\tau)) + \int_{0}^\tau\int_{\Rd}\vert \Lsm(\nabla_y v)\vert^2\d y \d r
\le \mathscr{F}(v_n(0)) + \alpha M\tau
\]
Thus, for any fixed $T>0$, thanks to \eqref{eq:entropy_n} we get
\begin{equation}
\int_{0}^T \int_{\Rd}\vert \Lsm(\nabla v_n)\vert^2 \d y \le C(T,M).
\end{equation}
As a result, combining the above estimate with \eqref{eq:est1_n} we get
\begin{equation}
\label{eq:est3_n}
\|v_n\|_{L^2(0,T;H^{1+s}(\Rd))} + \|w_n\|_{L^2(0,T;H^{1-s}(\Rd))} \le C(T).
\end{equation}
The weak formulation \eqref{eq:weak_formulation_FP} can be rewritten (for $v_n$)
as
\[
\int_{\Rd\times (0,T)}v_n\partial_t \eta \d y\d \tau = \int_{\Rd\times (0,T)}\tilde{w}_n \hbox{div}(\eta\nabla v_n ) \d y \d \tau,
\]
for all $\eta\in C^1_{c}(\Rd\times (0,T))$.
Thus, the since $w_n$ is bounded in $L^2(0,T;H^{1-s}(\Rd))$ and $v_n$ is bounded in $L^2(0,T;H^{1+s}(\Rd))$ we get a bound
for $\partial_\tau v_n$ in some $L^2(0,T;W^{-1,r}(\Rd))$ for $r>1$. Therefore,
there exist $\bar{v}_\infty$ and $\bar{w}_\infty$ and a not relabelled subsequence, such that
\begin{align}
& v_n \xrightarrow{n\to +\infty} \bar{v}_\infty\,\,\hbox{ weakly star in } L^\infty(0,+\infty; H^s(\Rd))
\cap L^2(0,T;H^{1+s}(\Rd)),\label{eq:weak_starv}\\
& w_n \xrightarrow{n\to +\infty} \bar{w}_\infty\,\,\,\,\,\hbox{ weakly star in } L^\infty(0,+\infty; H^{-s}(\Rd))\cap L^2(0,T;H^{1-s}(\Rd)),
\label{eq:weak_starw}\\
& v_n \xrightarrow{n\to +\infty} \bar{v}_\infty\,\,\,\,\hbox{ weakly in } L^1(\Rd\times (0,T)), \,\,\,\forall T>0,\label{eq:L1weak}\\
&v_n \xrightarrow{n\to +\infty} \bar{v}_\infty\,\,\,\,\hbox{ strongly in } L^2(0,T;H^{1+s-\delta}_{loc}(\Rd)), \,\,\,\forall \delta>0, \,\,\forall T>0,\label{eq:strongvn}
\end{align}
where the weak $L^1$-convergence follows from Dunford-Pettis Theorem thanks to the estimate \eqref{eq:est11_n}.
Note that \eqref{eq:L1weak} and \eqref{eq:strongvn} imply
\begin{equation}
v_n \xrightarrow{n\to +\infty} \bar{v}_\infty\,\,\,\,\hbox{ strongly in } L^1(\Rd\times (0,T))\,\,\,\forall T>0,\label{eq:L1strong}.
\end{equation}
Clearly, $\bar{v}_\infty\ge 0$ almost every where in $\Rd\times (0,+\infty)$.
Now we proceed with the identification of $\bar{v}_\infty$ as a weak solution of \eqref{eq:FP} emanating from $v_\infty$.
First of all, $\bar{w}_\infty$ is identified as $\bar{w}_\infty = \Ls \bar{v}_\infty$,
at least in the sense of distributions. The very same convergence \eqref{eq:weak_starw} holds also for
$\tilde{w}_n$ and we have $\tilde{w}_\infty= \bar{w}_\infty + \frac{\beta}{2}\vert y\vert^2$.
Moreover, testing the weak $L^1$ convergence with $\phi\equiv 1$ in $\Rd\times (0,\tau)$
we get, for any $\tau>0$,
\[
M\tau =
\lim_{n\to \infty}\int_{0}^\tau\int_{\Rd}v_n(y,r)\d y\d r =
\int_{0}^\tau\int_{\Rd}\bar{v}_\infty(y,r)\d y\d r.
\]
Thus,
\begin{equation}
\label{eq:mass_cons_longtime}
\int_{\Rd}\bar{v}_\infty(y,\tau)\d y = M, \,\,\,\,\forall \tau>0,
\end{equation}
namely the mass conservation.
The convergences above are enough, as in the proof of Theorem \ref{th:existence} to pass to the limit
in the weak formulation \eqref{eq:weak_formulation_FP} and obtain that ${\bar{v}_\infty}$ is
indeed a weak solution of \eqref{eq:FP}.

Moreover, \eqref{eq:energy_estv} gives that, for any $T>0$,
\begin{equation}
\int_{0}^T\int_{\Rd}\vert H_n\vert^2 \d y\d \tau = \int_{\tau_n}^{T+\tau_n}\int_{\Rd}\vert H\vert^2 \d y\d \tau\xrightarrow{n\to +\infty}0.
\end{equation}
Therefore, thanks to the above proved weak and strong convergences we conclude that
\begin{equation}
\label{eq:zero_diss}
\int_{0}^{+\infty}\int_{\Rd}\vert H_\infty\vert^2
\d y \d r = 0,
\end{equation}
where the vector field $H_\infty\in L^2(0,+\infty;L^2(\Rd))$ is the
weak limit of $H_n$ and satisfies
\begin{equation*}
\label{eq:defHinf}
\bar{v}_{\infty}^{1/2}H_\infty=\nabla\Big((\bar{w}_\infty+\frac{\beta}{2}\vert y\vert^2)\bar{v}_\infty\Big) -
 \Big(\bar{w}_\infty + \frac{\beta}{2}\vert y\vert^2\Big)\nabla \bar{v}_\infty,
 \end{equation*}

Therefore \eqref{eq:zero_diss} gives that $H_\infty=0$ almost everywhere in $\Rd\times (0,+\infty)$.
Thus,
($\tilde{w}_\infty:=\bar{w}_\infty + \frac{\beta}{2}\vert y\vert^2$)
\begin{equation}
\label{eq:weak_vinf}
\int_{0}^{T}\int_{\Rd} \bar{v}_\infty\partial_t \vf \d y \d \tau = -\int_{0}^T \int_{\Rd}\Big(\nabla(\tilde{w}_\infty {\bar v}_\infty)-\tilde{w}_\infty\nabla \bar{v}_\infty\Big) \cdot \nabla \vf \d y\d r = 0,
\end{equation}
for any $\vf \in C^1_{c}(\Rd\times (0,T))$ and thus we have that $\bar{v}_\infty$ is constant in time, hence
$\bar{v}_\infty = v_\infty$ for all $\tau\ge 0$.
In particular, we conclude that $v_\infty$ satisfies \eqref{eq:weak_stationary}.
\end{proof}

We have the following
\begin{prop}
\label{eq:winf_cost}
Let $v$ be a weak solution of \eqref{eq:FP} constructed according to
%$v$ a weak (rescaled) solution of \eqref{eq:source_type_form} in the sense of Definition \ref{def:weak_v}
Proposition \ref{prop:est_v} and let $v_\infty\in \omega(v)$. Then, in each connected component $\mathscr{C}_i$ of
\[
\mathscr{P}_\infty:=\left\{y\in \Rd: v_\infty(y)>0 \right\} = \bigcup_{i}C_i
\]
we have that there exists a constant $c_i$
 such that $w_\infty=\Ls v_\infty = c_i - \frac{\beta}{2}\vert y\vert^2$.
\end{prop}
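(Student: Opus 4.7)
The plan is to deduce from \eqref{eq:weak_stationary} that $\nabla \tilde{w}_\infty = 0$ on $\mathscr{C}_i$, where $\tilde{w}_\infty := w_\infty + \frac{\beta}{2}|y|^2$, and then invoke connectedness. The vector-valued distributional identity
\[
\nabla(\tilde{w}_\infty v_\infty) - \tilde{w}_\infty \nabla v_\infty = 0
\]
is precisely a way of encoding, in spite of the degeneracy of $v_\infty$, the formal product-rule relation $v_\infty \nabla \tilde{w}_\infty = 0$. On each component $\mathscr{C}_i$, where $v_\infty>0$, I expect this to reduce to $\nabla \tilde{w}_\infty = 0$.

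Concretely, I would fix $\mathscr{C}_i$, a compact $K\subset\subset\mathscr{C}_i$, a scalar test function $\phi\in C^\infty_c(K)$ and a coordinate direction $e_j$. I would then test \eqref{eq:weak_stationary} against the vector field $\Phi := (\phi/v_\infty)\,e_j$. The resulting scalar identity
\[
0 = -\int_{\mathbb{R}^d} \tilde{w}_\infty v_\infty \, \mathrm{div}\,\Phi\,\d y - \int_{\mathbb{R}^d} \tilde{w}_\infty \nabla v_\infty \cdot \Phi\,\d y
\]
expands, via $\mathrm{div}\,\Phi = (\partial_j\phi)/v_\infty - \phi(\partial_j v_\infty)/v_\infty^2$, into three integrals; two of them cancel pointwise and what survives is
\[
\int_{\mathbb{R}^d} \tilde{w}_\infty \,\partial_j \phi\,\d y = 0.
\]
Since $K$, $\phi$ and $j$ are arbitrary, $\nabla \tilde{w}_\infty = 0$ in $\mathcal{D}'(\mathscr{C}_i)$, hence $\tilde{w}_\infty$ is locally constant on the component and, by connectedness of $\mathscr{C}_i$, equal to a single value $c_i$. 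Rearranging yields $w_\infty = c_i - \frac{\beta}{2}|y|^2$ on $\mathscr{C}_i$.

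The main obstacle is of a regularity nature: to use $\Phi = (\phi/v_\infty)\,e_j$ as a legitimate test field in \eqref{eq:weak_stationary}, I need $v_\infty$ to have a representative for which $\{v_\infty>0\}$ is open and $v_\infty$ is bounded away from zero on every compact $K\subset\subset\mathscr{C}_i$. Theorem \ref{th:long_time1} only provides $v_\infty\in H^{1+s}\cap L^1(\mathbb{R}^d,(1+|y|^2)\mathscr{L}^d)$, so I would either invoke the $C^{1,\alpha}$ regularity furnished by the fractional obstacle theory of Subsection \ref{ssec:stationary_sol} (cf.\ Theorem \ref{th:properties}) to ensure continuity, or run the argument at level $\kappa>0$ with the regularized test field $\Phi_\kappa := (\phi/(v_\infty+\kappa))\,e_j$ and then pass to the limit $\kappa\to 0^+$, observing that after the cancellation only the linear term $\int \tilde{w}_\infty \partial_j \phi$ survives and is handled by dominated convergence.
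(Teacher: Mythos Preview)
Your overall strategy is the same as the paper's: deduce $\nabla\tilde w_\infty=0$ on each $\mathscr{C}_i$ from the pointwise identity $\nabla(\tilde w_\infty v_\infty)-\tilde w_\infty\nabla v_\infty=0$. The difficulty you correctly flag---making sense of the division by $v_\infty$---is the whole point, and neither of your proposed fixes closes it.

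Fix (a) is circular. The $C^{1,\alpha}$ regularity in Theorem~\ref{th:properties} is a property of the \emph{solution of the obstacle problem}, not an a priori property of an arbitrary $v_\infty\in\omega(v)$. The identification of $v_\infty$ with an obstacle solution comes only later (Theorem~\ref{th:long_time}), and uses the present Proposition as an ingredient; you cannot invoke it here.

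Fix (b) has a computational gap. With $\Phi_\kappa=(\phi/(v_\infty+\kappa))e_j$ the cancellation is no longer exact: a short computation gives
\[
-\int_{\Rd}\tilde w_\infty\,\frac{v_\infty}{v_\infty+\kappa}\,\partial_j\phi\,\d y
\;-\;\kappa\int_{\Rd}\frac{\tilde w_\infty\,\phi\,\partial_j v_\infty}{(v_\infty+\kappa)^2}\,\d y \;=\;0,
\]
and the second term does \emph{not} automatically vanish as $\kappa\to 0$. The pointwise bound $\kappa/(v_\infty+\kappa)^2\le 1/(4v_\infty)$ shows that dominated convergence would require $\tilde w_\infty\,\partial_j v_\infty/v_\infty\in L^1(\mathrm{supp}\,\phi)$, which is exactly the information you do not have unless $v_\infty$ is bounded away from zero on $\mathrm{supp}\,\phi$. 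But that lower bound is precisely what continuity of $v_\infty$ would give you, and with only $v_\infty\in H^{1+s}(\Rd)$ the positivity set $\{v_\infty>0\}$ is a priori defined only up to null sets, so ``$K\subset\subset\mathscr{C}_i$'' does not even guarantee $\mathrm{ess\,inf}_K v_\infty>0$.

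The paper resolves this by working on the super-level sets $\mathscr{P}_\delta=\{v_\infty\ge\delta\}$, which are quasi-open for a Sobolev function, and applying a quotient-rule lemma of Kilpel\"ainen--Mal\'y \cite[Lemma~2.5]{ki-ma}: since $\nabla(\tilde w_\infty v_\infty)=\tilde w_\infty\nabla v_\infty\in L^q_{\mathrm{loc}}$, one gets $\tilde w_\infty\in W^{1,q}$ on $B_R\cap\mathscr{P}_\delta$ with $\nabla\tilde w_\infty=v_\infty^{-1}\big(\nabla(\tilde w_\infty v_\infty)-\tilde w_\infty\nabla v_\infty\big)=0$ there. Letting $\delta\downarrow 0$ gives the claim on $\mathscr{P}_\infty$. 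If you want to save your approach, the natural route is to take $\phi$ supported in $\mathscr{P}_\delta$ (where $v_\infty\ge\delta$ makes the error term $O(\kappa/\delta^2)$) and then let $\delta\downarrow 0$; but at that point you are essentially reproducing the paper's argument.
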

\begin{proof}
We observe that the definition of $H_\infty$ implies that
\begin{equation}
\label{eq:equality}
\nabla(\tilde{w}_\infty v_\infty) -
 \tilde{w}_\infty\nabla v_\infty = 0 \,\,\,\,\hbox{ a.e. in } \Rd.
\end{equation}
Thus,
from  it follows that, in any connected component of $\mathscr{P}_\infty$,
we get that $\tilde{w}_\infty$ is constant.

In fact, for any $\delta>0$ let us consider the set
\[
\mathscr{P}_{\delta}:=\left\{x\in \Rd: v_\infty \ge \delta\right\}.
\]
Due to the Sobolev regularity of $v_\infty$ this set is quasi open (see \cite{ki-ma} for the definition).
Now, for any fixed $R>0$ and $x_0\in \Rd$, thanks to \eqref{eq:equality}, we have that
\[
\nabla(\tilde{w}_\infty v_\infty) \in L^q(B_{R}(x_0)),
\]
for some $q\ge 1$. Therefore, $\tilde{w}_{\infty} \in W^{1,q}(B_R(x_0)\cap P_\delta)$ (see \cite[Lemma 2.5]{ki-ma}) with
\[
\nabla \tilde{w}_{\infty}  = \frac{1}{v_{\infty}}\big( \nabla(\tilde{w}_\infty v_\infty) -
 \tilde{w}_\infty\nabla v_\infty\big),\,\,\,\hbox{ a.e. in }B_R(x_0)\cap P_\delta.
\]
Consequently, \eqref{eq:equality} implies that $\nabla  \tilde{w}_{\infty} = 0$
almost everywhere in  $B_R(x_0)\cap P_\delta$, for any $x_0\in \Rd$, for any $R>0$ and for any $\delta>0$
which implies that $\tilde{w}_{\infty}$ is constant on any connected component of $\mathscr{P}_\infty$.
\end{proof}
We can now state the main result of this Section.
\begin{theor}
\label{th:long_time}

Let us given a measurable function $u_0:\Rd\to \R$ such that

\begin{align}
\label{eq:u0long_hypo}
& \int_{\Rd}u_0\log u_0 \d x<+\infty,\\
& \int_{\Rd}\vert \Lsm u_0\vert^2 \d x+ \int_{\Rd}\vert x\vert^2 u_0\d x<+\infty,
\\
& \int_{\Rd}u_0 \d x = M,
\end{align}
and let $u$ be a weak solution of \eqref{eq:prob_intro1} given by Theorem \ref{th:existence} with initial datum $u_0\ge 0$ and satisfying \eqref{eq:extra_assumption}.
Let $v$ be the rescaled weak solution according to \eqref{eq:source_type_form} and to Proposition \ref{prop:est_v}.
Let us assume that for any $v_\infty\in \omega(v)$ the set $\mathscr{P}_\infty$ is connected.
Then, the following convergence holds
\begin{equation}
\label{eq:long_timeV}
v(\cdot, \tau) \xrightarrow{\tau \to +\infty}v_C(\cdot), \,\,\,\,\hbox{ in } L^1(\Rd),
\end{equation}
where $v_C$ is the solution of the obstacle problem provided by Theorem \ref{th:stationary}
 with the constant $C$ determined by the mass law \eqref{eq:mass_law}.
 Therefore (recall \eqref{eq:source_type_form},  \eqref{eq:self_similar_th} and \eqref{eq:vtou}), in terms $u$ we have the following large times convergence
  \begin{equation}
 \label{eq:large_times_u}
 u(\cdot, t) - u_C(\cdot,t) \xrightarrow{t\to +\infty} 0\,\,\,\,\,\hbox{ in } L^1(\Rd),
 \end{equation}
 namely the convergence of the corresponding weak solution of \eqref{eq:prob_intro1} to the self-similar solution $u_C$.

\end{theor}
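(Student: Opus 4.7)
The plan is to combine the $\omega$-limit characterization of Theorem \ref{th:long_time1} with the rigidity of stationary profiles having connected positivity set analysed in Section \ref{sec:self_similar}, and then to upgrade subsequential convergence to convergence of the full trajectory. First, by Theorem \ref{th:long_time1}, given any sequence $\tau_n\nearrow+\infty$ one can extract a (not relabelled) subsequence such that $v(\tau_n)\to v_\infty$ strongly in $L^1(\Rd)$, with $v_\infty\in H^{1+s}(\Rd)\cap L^1(\Rd,(1+|y|^2)\mathscr{L}^d)$ a weak stationary solution of \eqref{eq:FP}. Mass conservation \eqref{eq:cons_mass_v} forces $\int_{\Rd}v_\infty\,\d y=M$, so $v_\infty\not\equiv 0$ and its positivity set $\mathscr{P}_\infty$ is nonempty.

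Next, I invoke Proposition \ref{eq:winf_cost}: on every connected component $\mathscr{C}_i$ of $\mathscr{P}_\infty$ one has $\Ls v_\infty=c_i-\frac{\beta}{2}|y|^2$. Under the standing assumption that $\mathscr{P}_\infty$ is connected, there is a single constant $C>0$ such that $v_\infty$ solves exactly the system \eqref{eq:stat4bis}. By the classification carried out in Section \ref{sec:self_similar}, any such solution (with connected positivity set) takes the form $v_1+Kv_2$ of the decomposition \eqref{eq:general_sol}. The component $v_2$ is a rescaling of the Getoor profile \eqref{eq:getoor_sol}, which by \eqref{eq:getbad} and the subsequent computation fails to belong to $H^{1+s}(\Rd)$; since $v_\infty$ does, this forces $K=0$ and hence $v_\infty=v_C$, the rescaled obstacle-problem solution of Proposition \ref{lemma:rescaling_obstacle} and Lemma \ref{lemma:vd=vo}. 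The mass law \eqref{eq:mass_law} then pins down $C$ uniquely from $M$, so every subsequential $L^1$-limit is the \emph{same} profile $v_C$, i.e.\ $\omega(v)=\{v_C\}$.

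Full convergence follows by a standard argument: if \eqref{eq:long_timeV} failed, there would exist $\delta>0$ and $\tau_n\nearrow+\infty$ with $\|v(\tau_n)-v_C\|_{L^1(\Rd)}\ge\delta$, but the first step applied to this sequence produces a subsubsequence converging in $L^1$ to $v_C$, a contradiction. Finally, undoing the self-similar change of variables \eqref{eq:source_type_form}--\eqref{eq:vtou}, and using that the Jacobian $(1+t)^{\beta d}$ cancels the prefactor $(1+t)^{-\alpha}$ (since $\alpha=\beta d$), the $L^1$ norm is preserved under the rescaling, so $\|v(\cdot,\tau)-v_C(\cdot)\|_{L^1(\Rd)}\to 0$ translates directly into \eqref{eq:large_times_u}.

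The most delicate point is the rigidity step: the connectedness hypothesis on $\mathscr{P}_\infty$ is essential to reduce the a priori family of component-wise constants $\{c_i\}$ to a single $C$, and the $H^{1+s}$-regularity of $v_\infty$ inherited from Theorem \ref{th:long_time1} is what kills the Getoor-type degree of freedom in \eqref{eq:general_sol}. Without either of these inputs one could not conclude that $\omega(v)$ is a singleton, and convergence along the full trajectory would fail; everything else in the argument is essentially compactness and a standard subsequence extraction.
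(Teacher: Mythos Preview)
Your proof is correct and follows essentially the same route as the paper's: invoke Theorem \ref{th:long_time1} and Proposition \ref{eq:winf_cost}, use connectedness to reduce to a single constant in \eqref{eq:stat4bis}, kill the Getoor component via the $H^{1+s}$ regularity of $v_\infty$ as in \eqref{eq:getbad}, identify the limit with $v_C$ through Lemma \ref{lemma:vd=vo} and the mass law \eqref{eq:mass_law}, and then pass from subsequential to full convergence by uniqueness of the limit. Your presentation is in fact slightly more explicit than the paper's in two places --- the standard contradiction argument upgrading subsequential to full convergence, and the observation that $\alpha=\beta d$ makes the $L^1$ norm invariant under the rescaling \eqref{eq:source_type_form} --- but these are expansions of steps the paper compresses into single sentences rather than a different strategy.
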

\begin{proof}
The Proposition above shows that $v_\infty$ solves
\begin{equation}
\label{eq:stat_infinity}
\begin{cases}
\Ls v_\infty = \sum_{i\in \mathscr{N}}c_i\chi_i(x) - \frac{\beta}{2}\vert y\vert^2\,\,\,\hbox{ in }\mathscr{P}_\infty,\\
v_\infty = 0 \,\,\,\,\hbox{ in }\Rd\setminus \mathscr{P}_\infty,
\end{cases}
\end{equation}
($\chi_i$ is the characteristic function of $\mathscr{C}_i$).

Thus, the assumption of connectedness of $\mathscr{P}_\infty$,
gives that
$v_\infty$ can represented as in \eqref{eq:general_sol} with
$K=0$ due to the regularity. In this way, thanks to Lemma \ref{lemma:vd=vo}, we conclude that $v_\infty$ is the obstacle solution $v_C$ with the constant $C$ given according to the mass law \eqref{eq:mass_law}.
Therefore, up to a subsequence (see \eqref{eq:L1strong}),
\[
v(\cdot,\tau_n)\xrightarrow{n\to \infty} v_\infty(\cdot) = v_C(\cdot) \,\,\,\hbox{ in } L^1(\Rd).
\]
Then, the uniqueness of the solution of the obstacle problem, gives that the convergence above holds not only for a subsequence of times and therefore \eqref{eq:long_timeV} is satisfied. The convergence for $u$ follows from the definition of $v$ in \eqref{eq:source_type_form}.
\end{proof}

We conclude this Section with some comments.
Both Theorems \ref{th:long_time1} and \ref{th:long_time}
work for those weak solutions that satisfy the extra assumption \eqref{eq:extra_assumption}. As we observed, the proof
that all weak solutions satisfy \eqref{eq:extra_assumption} constitutes
a challenging open problem.

We observe that we can actually dispense with this assumption
at the price of introducing an extra approximation at the level
of the Fokker Planck equation.
This approximation is analogous to the approximation we used
for proving existence in Theorem \ref{th:existence} and produces
weak solutions of the Fokker Planck equation that satisfy the estimates \eqref{eq:entropy_estv} and \eqref{eq:energy_estv}.
This would correspond in studying
as a first the long time behavior of these weak solutions of the
Fokker Planck equation and then in obtaining as a second step the convergence to the self-similar solution of the weak solutions
of the thin film equation by rescaling.
Unfortunately, this procedure has a potential oddity since, due to nonuniqueness, the weak solution of the thin film equation that we obtain from rescaling back the weak solution of the Fokker Planck is not necessarily one of the weak solutions we construct in Theorem \ref{th:existence}.

This explains why we chose to include \eqref{eq:extra_assumption} as a suitable
extra regularity assumption.

%\section{\bf Appendix}
%\label{sec:appendix}

%%%%%%%%%%%%%%%%%%%%%%%%%%%%%%%%%%%%%%%%%%%%%%%%%

\section{\bf An extension to higher order problems with similar  structure}
\label{sec.higher4}

An important feature of equation \eqref{eq:prob_intro1} is its conservation law structure, that we may display as
\[
\begin{cases}
\partial_t u = \div( u {\bf F}),\\
{\bf F} = \nabla p,\\
p = \Ls u.
\end{cases}
\]
The particular equation depends on the closing relationship between $u$ and $p$. For instance, to obtain equation \eqref{eq:prob_intro2}
one considers $-s$ instead of $s$ and to obtain the (local) porous medium equation one considers $s=0$). More in general, a interesting open problem is the analysis of
\begin{equation}
\label{eq:closing}
p = \mathscr{K}[u],
\end{equation}
where $\mathscr{K}$ can be a local or nonlocal operator, even of higher order than $2$.
The case $\mathscr{K}=(-\Delta)^m$ with $m>1$ has been first studied to our knowledge in \cite{bernis-fried90} and then in \cite{Flittonking, gala_king1, gala_king2, ChavesGal} and others. Work is mostly done in one space dimension.

\medskip

\noindent {\bf Self-similar higher order solutions.} \noindent As an advance to the theory of higher order equations, we contribute here  the calculation the regular self-similar solution for the equations of the form
\begin{equation}
\label{eq:ho}
\begin{cases}
\partial_t u - \div (u \nabla p) = 0, \,\,\hbox{ in } \Rd \times (0,T)\\
p = A_{2+2s}u, \,\,\hbox{ in }\Rd \times (0,T)\\
u(x,0) = u_0(x), \,\,\,\hbox{ in } \Rd,
\end{cases}
\end{equation}
where $s\in (0,1)$, $ A_{2+2s}=(-\Delta)^{1+s}=\Ls \circ (-\Delta )$ and $\Ls$,   is the fractional Laplacian as in previous section. The dimension $d\ge 1$. The order of the equation is then $4+2s\in (4,6)$.
The theory of existence for general equations of the type \eqref{eq:ho} has not been done but it should follow
the steps of Section \ref{sec:existence}.

\smallskip

\noindent (i) If again we look for solutions of the self-similar form
\begin{equation}
\label{eq:source_type_form_ho}
u(x,t) = \frac{1}{(1+t)^\alpha}v\Big(\frac{x}{(1+t)^\beta}, \log (1+t)\Big),
\end{equation}
where the function $v:\mathbb{R}^d\times \mathbb{R}\to \mathbb{R}$
is to be appropriately determined and the parameters $\alpha$ and $\beta$ are now given by
\begin{equation}
\label{eq:alpha/beta_ho}
\alpha = \frac{d}{d  + 2(2+ s)}, \,\,\,\,\beta = \frac{1}{d+ 2(2+ s)}.
\end{equation}
due to the constraints that we will find below. We set
$$
y:=\frac{x}{(1+t)^\beta}, \quad \tau:=\log (1+t), \quad w = \Ls v.
$$
Assuming all the regularity needed to justify the computations, and after calculations that have no novelty, we arrive following \sl nonlinear and nonlocal Fokker-Planck type equation:\rm
\begin{equation}
\label{eq:fokker-cahn_ho}
\begin{cases}
\partial_\tau v -\div_{y}\Big(v\big( \nabla_y w + \beta y\big)\Big)= 0, \,\,\,\hbox{ in } \Rd\times (0,+\infty)\\
w = \Ls (-\Delta v) \,\,\,\,\,\hbox{ in } \Rd\times (0,+\infty).
\end{cases}
\end{equation}

\noindent (ii) We make a reduction in the set of possible solutions and concentrate on those stationary solutions of \eqref{eq:fokker-cahn_ho} such that
\begin{equation}
\label{eq:stationary2_ho}
\begin{cases}
v\nabla_y\big( w + \frac{\beta}{2} \vert y\vert^2\big) = 0\,\,\,\,\hbox{ in } \,\,\Rd,\\
w =\Ls (-\Delta v)\,\,\,\hbox{ in } \Rd.
\end{cases}
\end{equation}
As in the parallel study made in \cite{CVlarge} for negative values of $s$, this reduction must be justified by the later analysis of the long-time behavior and the asymptotic convergence to a self-similar profile.

Obtaining a solution is then reduced to the famous complementarity rule: either $v=0$ or
$\nabla_y\big( w + \frac{\beta}{2} \vert y\vert^2\big) = 0$. Furthermore, and the second condition will be simplified to finding a ball where $w =C - \frac{\beta}{2} \vert y\vert^2$ for some $C\in \mathbb{R}$.

\subsection{\bf Explicit form.} The solution of the stationary self-similar problem can be explicitly computed as follows.
\begin{theor}
\label{th:explicit}
Consider the function
\begin{equation}
\label{eq:trial}
V(y):=(A - a\vert y\vert^2)^{2+s}_{+}\,\,\,\,\,\hbox{ for } y\in \mathbb{R}^d,
\end{equation}
which is positive in the ball $R=(A/a)^2$. There exists $a=a(d,s)$ such
 $V$ solves the problem
\begin{equation}
\label{eq:stationary_ho}
\begin{cases}
\Ls (-\Delta) V= C-\frac{\beta}2\vert y\vert^2\,\,\,\hbox{ in } B_R\\
V = 0 \,\,\,\,\hbox{ in } \Rd\setminus B_R\,.
\end{cases}
\end{equation}
The precise value of $a$ is computed  below, \eqref{eq:a}. $A>0$ is a free constant and
$C = c(s,d)A$ with $c(s,d)$ computed at the end of the proof.
\end{theor}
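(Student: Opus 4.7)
The plan follows the philosophy of Subsection 4.2.1: reduce to Dyda's explicit formulas \cite{dyda} for the action of $\Ls$ on powers of $(1-|y|^2)_+$, exploiting that $\Ls$ commutes with $-\Delta$. The main computational observation is that $-\Delta V$ can be rewritten, after one elementary algebraic manipulation, as a linear combination of the two Dyda profiles for which $\Ls$ is known in closed form. This reduces the verification of \eqref{eq:stationary_ho} to a finite-dimensional matching of coefficients in $|y|^2$, exactly as in the derivation of \eqref{eq:explicit_obstacle}.

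Concretely, the first step is to compute $-\Delta V$ on $B_R$ by the chain rule. Differentiation produces an overall factor $(A-a|y|^2)^s$ multiplied by a degree one polynomial in $|y|^2$. Using the identity $a|y|^2 = A - (A-a|y|^2)$ to eliminate the factor $|y|^2$ converts this into
\begin{equation*}
-\Delta V(y) = 2a(2+s)(d+2+2s)(A - a|y|^2)_+^{1+s} - 4a(2+s)(1+s)\,A\,(A - a|y|^2)_+^{s},
\end{equation*}
valid on all of $\mathbb{R}^d$. The two profiles on the right are precisely those for which Dyda's formulas, combined with the $2s$-homogeneity of $\Ls$ and the rescaling $y\mapsto\sqrt{a/A}\,y$, give, on $B_R$,
\begin{equation*}
\Ls\bigl[(A-a|y|^2)_+^{1+s}\bigr] = a^s A\,\kappa_{s,d}\bigl(1-\gamma_{s,d}(a/A)|y|^2\bigr), \qquad \Ls\bigl[(A-a|y|^2)_+^{s}\bigr] = a^s\,\kappa^G_{s,d},
\end{equation*}
with $\kappa_{s,d}$, $\gamma_{s,d}=1+2s/d$ as in \eqref{eq:dyda1}--\eqref{eq:dyda2} and $\kappa^G_{s,d}$ the analogous Dyda/Getoor constant for exponent $s$ corresponding to the profile \eqref{eq:getoor_sol}.

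Combining these two steps, and applying $\Ls$ termwise (legitimate because both terms are in the Dyda framework), yields
\begin{equation*}
\Ls(-\Delta)V(y) = \mathcal{A}(A,a;s,d) - \mathcal{B}(a;s,d)\,|y|^2, \qquad y\in B_R,
\end{equation*}
where $\mathcal{B}(a;s,d) = 2a^{2+s}(2+s)(d+2+2s)\kappa_{s,d}\gamma_{s,d}$ is strictly positive, and $\mathcal{A}$ is linear in $A$ with coefficient proportional to $a^{1+s}$. Since $\beta > 0$, the scalar equation $\mathcal{B}(a;s,d) = \beta/2$ admits a unique positive solution $a=a(d,s)$, which gives the claimed existence. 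The constant $C$ is then the fixed multiple $\mathcal{A}(A,a;s,d) = c(s,d)\,A$ of the free parameter $A$, and outside $B_R$ both sides of \eqref{eq:stationary_ho} vanish trivially. The argument is entirely algebraic; the only real check is the strict positivity of $\mathcal{B}$, which is automatic from the positivity of the Dyda constants, so no serious obstacle arises beyond bookkeeping of the multiplicative factors.
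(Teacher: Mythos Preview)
Your proposal is correct and follows essentially the same route as the paper: both compute $-\Delta V$ by the chain rule, split it as a linear combination of $(A-a|y|^2)_+^{1+s}$ and $(A-a|y|^2)_+^{s}$, apply Dyda's formula to the first summand and Getoor's formula \eqref{eq:getoor_sol} to the second, and then match the coefficient of $|y|^2$ to fix $a$. Your use of the identity $a|y|^2 = A-(A-a|y|^2)$ is a slightly cleaner way to reach the decomposition than the paper's regrouping, but the resulting two terms and the value of $\mathcal{B}(a;s,d)$ coincide exactly with the paper's $F_1$, $F_2$ and with \eqref{eq:a}.
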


%\begin{proof}
\noindent {\sc Proof.}  (i)
Let us first calculate the Laplacian of $V$ in $B_R$. Writing $V=f(Z)$ with $f(Z)=Z^{2+s}$, $Z=A - ar^2$, and $r=|y|$, we use the formula
$$
\Delta f(Z)=f'(Z)\,\Delta Z + f''(Z)|\nabla Z|^2,
$$
to get at all points where $V>0$
$$
-\Delta V=(2+s)Z^{1+s}(2ad)-(2+s)(1+s)Z^s(4a^2r^2)=
$$
$$
=(2+s)Z^{s}\{(2ad)(A-ar^2)-4a^2(1+s)r^2\}.
$$
The coefficient of $-r^2$ in the last parenthesis is $ 2a^2d + 4a^2(1+s)$ that we write $ \mu a^2$ with $\mu=2d+4(1+s))$. Therefore, we get
$$
-\Delta V=(2+s)Z^{s}\mu a(A-ar^2)+ (2d-\mu)(2+s)Aa\,Z^{s},
$$
and finally we get
$$
-\Delta V=F_1(y)+F_2(y)\,,
$$
with
\begin{equation}\label{1}
F_1(y)=(2+s)\mu a\,(A-ar^2)^{1+s}, \quad F_2(y)= -4(1+s)(2+s)A a\,(A-ar^2)^{s}.
\end{equation}
The splitting into these two functions will be very convenient. Note that $-\Delta V=0$ outside of the support, and $-\Delta V$ is a smooth function globally, since there is no delta function (measure) at the support boundary because the normal derivative of $V$ at $r=R$ is zero.

\medskip

\noindent (ii) Next we prepare some very precise calculations. It is convenient to define
\begin{equation}
\label{eq:dyda1_ho}
v_{1}(y):=\frac{1}{\kappa_{s,d}}(1 - \vert y\vert^2)^{1+s}_{+}\,\,\,\,\,\hbox{ for } y\in \mathbb{R}^d,
\end{equation}
where $\kappa_{s,d}:=2^{2s}\Gamma(s+2)\Gamma(s+ \frac{d}{2})\Gamma(\frac{d}{2})^{-1}$, with $\Gamma(\cdot)$ being the Euler $\Gamma$-function. This function is supported in the ball of radius 1.  According to Dyda \cite{dyda}  we have
\begin{equation}
\label{eq:dyda2_ho}
\begin{cases}
\Ls v_{1} (y)= 1 - \gamma_{s,d}\vert y\vert^2=:f(y)\,\,\,\,\hbox{ in }B_{1}(0)\\
v_1(y) = 0 \,\,\,\,\hbox{ in } \Rd\setminus B_1(0),
\end{cases}
\end{equation}
with $\gamma_{s,d}:=1+ \frac{2s}{d}>1$.
Notice that $\Ls v_{1} (y)$ is positive for small $|y|$ but negative for $|y|\sim 1$. Next, we need to change the constant $\gamma_{s,d}$ into $\beta/2$ in the last formula, and this is done by rescaling as follows: we introduce a parameter $\lambda>0$ and set
 \begin{equation}
 \label{eq:explicit_obstacle_ho}
 v_\lambda (y):= \frac{1}{\lambda^{2s}}v_1(\lambda y) = \frac{1}{\lambda^{2s}\kappa_{s,d}}(1- \lambda^2 \vert y\vert^2)_{+}^{1+s}, \end{equation}
 For every $y\in B_{1/\lambda}(0)$ we have
\[
\Ls v_\lambda (y)=\frac{1}{\lambda^{2s}}(\Ls v_1)(\lambda y)\lambda^{2s} = (\Ls v_1)(\lambda \,y).
\]
Fixing the value $\lambda:=\sqrt{\beta/(2\gamma_{s,d})}\,,$  we get the result:
$$
\Ls v_\lambda(y)= 1- \frac{\beta}{2}\vert y\vert^2.
$$
{
 We need to introduce another rescaling \ $ v_K(y):=K^{1+s}v(K^{-1/2}y)$,
that satisfies
 \begin{equation}
 \label{eq:wc_ho}
 \Ls v_K(y):= K \Ls v(K^{-1/2}y).
 \end{equation}
 Combining both scalings we can define
 \begin{equation}\label{2}
 \widehat v(y)=v_{\lambda,K}(y)=\frac{1}{\lambda^{2s}\kappa_{s,d}}
 (K-{\lambda^2}\vert y\vert^2)_{+}^{1+s}\,.
\end{equation}
This function has the property that
\begin{equation}
 \Ls \widehat v= K-\frac{\beta}{2}\vert y\vert^2
\end{equation}
 in the positivity set of $ \widehat v$, the ball of radius $(2K/\beta)^{1/2}=(2K(d+4+2s))^{1/2}$.

\medskip

\noindent (iii) In this step we proceed towards the solution $V$ by adjusting $F_1(y)$ in  \eqref{1} to formula \eqref{2}. Forgetting for the moment about $A$ and $K$ which are the free constants, we determine the main constant $a>0$ by the relationship
$$
(2+s)\mu a^{2+s}=
 \frac{\lambda^{2}}{\kappa_{s,d}}.
$$
This produces a formula for  $a=a(d,s)$:
\begin{equation}
\label{eq:a}
a^{-(2+s)} = %\frac{\beta}{2\gamma_{s,d}(2+s)\mu_{s,d}\kappa_{s,d}}=
2(2+s)(d+4+2s)\gamma_{s,d}\mu_{s,d}\kappa_{s,d}\,.
\end{equation}

\noindent {\sc Verification.} For $s=0$ and $d=1$ we are dealing with the Thin Film equation in one dimension, and then $a^2 = 1/120$ that is consistent with the explicit solution found by Bernis-Peletier-Williams in \cite{BPW}.

\medskip

Moreover, the free constants  $K>0$ and $A>0$ are related by
\begin{equation}
\label{eq:condizioneAa}
((2+s)\mu a )^{1/(1+s)} A = (\lambda^{2s}\kappa_{s,d})^{-1/(1+s)} K\,,
\end{equation}
so that
\begin{equation}
A = K \frac{a}{\lambda^2}
\label{eq:A_ho}.
\end{equation}
Now, with this choice of $A$ and $a$ we get
\begin{equation}
\label{eq:sol1}
\Ls F_1(y) = K-\frac{\beta}{2}\vert y\vert^2,\,\,\,\,\hbox{ in }B_R(0)
\end{equation}
\medskip
\noindent (iv)
Next, we tackle $F_2(y)$. The choice \eqref{eq:a} and \eqref{eq:A_ho} of $a$ and $A$ fix
the value of $\Ls F_2$ on $B_R(0)$.
More precisely, we observe that
\[
F_2(y) = -4(1+s)(2+s)A a\,(A-ar^2)^{s} = -4(1+s)(2+s)A^{1+s} a \,v_G\Big(\frac{a^{1/2}y}{A^{1/2}}\Big),
\]
where $v_G$ is the Getoor solution \eqref{eq:getoor_sol}.
Thus,
\begin{equation}
\label{eq:getoor_ho}
\Ls F_2(y) = -4(1+s)(2+s)Aa^{1+s}\kappa_{s,d}=:K_2,\,\,\,\,\,\hbox{ in } B_R(0).
\end{equation}
As a result, we have
\begin{equation}
\label{eq:F1+F_2}
\begin{cases}
(\Ls (-\Delta) V)(y) = \Ls (F_1 +F_2)(y) = K-K_2 -\frac{\beta}{2}\vert y\vert^2\,\,\,\,\,\hbox{ in } B_R(0)\\
V = 0 \,\,\,\,\hbox{ in } \Rd\setminus B_R(0).
\end{cases}
\end{equation}

Therefore,
$$
\Ls(-\Delta)V=K-K_2-\frac{\beta}2r^2.
$$
The proof is done with $C=K-K_2=c(s,d)A$ with

\begin{equation}
 \label{eq:C_ho}
 \quad c(s,d)=\frac{\beta}{2a\gamma_{s,d}}\Big(1 - \frac{4(1+s)}{\kappa_{s,d}(2d + 4(1+s))} \Big).
 \end{equation}
  \qed
%\end{proof}

\section{\bf Open problems}
\label{sec:OP}
In this final Section we collect some open problems that we find worth considering.

\medskip

\noindent
{
 $\bullet$ \sc Gradient Flow. \rm An interesting open problem, motivated by the decaying of the energy $\mathscr{E}$ defined by \eqref{energy}, is whether the evolution \eqref{eq:prob_intro1} is a Wasserstein gradient flow for $\mathscr{E}$.
This is the case indeed for the related model \eqref{eq:prob_intro2}, which was shown in \cite{LMS} to be a Wasserstein gradient flow
for the $\frac{1}{2}\|\cdot\|_{H^{-s}(\Rd)}^2$-norm, and for the Thin Film equation ($s=1$) (see \cite{McMS}).
More in general, an interesting problem is to understand whether \eqref{eq:prob_intro1} with a concave mobility $m(u) = u^\gamma$
is indeed a gradient flow for $\mathscr{E}$ with respect to a weighted Wasserstein distance of the type of \cite{DNS}.
}

\medskip

  \noindent $\bullet$
  \sc Compactly supported solutions. \rm
  In Section \ref{sec:self_similar} we have constructed self-similar solutions with compact support. These are weak solutions
   (for $t\ge 0$) according to Definition \ref{def:weak} that originate from a Dirac Mass located in $t=-1$. For the moment these are the only solutions
 we are able to construct that are compactly supported.
 It is clearly interesting to understand whether compactly supported initial conditions generate compactly supported solutions.
 This is indeed a quite complicated question since the equation is formally of order $2+2s$ and thus we do not have comparison arguments
 at our disposal.
 If the solutions are compactly supported a free boundary appears and must be studied. This is a difficult open problem that was been thoroughly investigated for the PME,  see for instance \cite{Vaz2007} and the recent work \cite{KKV}, where extensive references are given. The topic has also attracted lot of attention for the Thin Film equation, see without any claim of completeness \cite{bernis96}, \cite{grunn} and \cite{GK}. A general reference for the mathematics of free boundaries is \cite{CaffSal05}.

\medskip

\noindent $\bullet$ \sc Self-similar solutions\rm.
As discussed in the paper, the self-similar solutions of equation
\eqref{eq:prob_intro1} are given by the Barenblatt profiles
\begin{equation}
 \label{eq:ss_open}
 v(y)=(C_1-C_2  |y|^2)_+^{1+s}\,,
 \end{equation}
which coincides with the Barenblatt profile for the standard Porous Medium equation with $m = \frac{s+2}{s+1}$. We find this coincidence quite interesting
and worth to be further analysed. Note on this regard that
when $s=1$ (hence $m=3/2$), namely thin films with linear mobility, this observation
has been already successfully used in \cite{CT_thinfilm} for the long time behaviour of the thin film equation.

\noindent $\bullet$ \sc Uniqueness\rm.
So far we have proved existence of a weak solution. A natural question is to understand whether some uniqueness holds, at least in $1$-D. This is an interesting problem already for $s=1$, namely the Thin Film equation (see \cite{MMS} and references therein).
In particular, it would be interesting to see if there is uniqueness when there is a Dirac Mass as initial data. This uniqueness result, if true,
would be important in the convergence to self-similar solutions as in the so called ``three steps method" for the classical porous medium equation, see \cite[Chapter 18 ]{Vaz2007}.)
% and \cite{dTSV, dTSV2} for the porous medium %equation with nonlocal pressure.)

\medskip

\noindent $\bullet$  \sc Multi-Bump stationary states. \rm
Theorem \ref{th:long_time} requires the hypothesis of connectedness
of the omega-limit set of a weak solution $v$ of \eqref{eq:FP}.
An interesting problem is clearly to understand if this assumption is
really necessary. In particular, it would be interesting to
exclude the presence stationary states with disconnected support
or to provide examples of multi-bump asymptotic limits. This problem
is clearly related to the construction of self-similar solutions for which the positivity set is disconnected.

\medskip

\noindent $\bullet$ \sc Singular limits. \rm
As we have already pointed out, Equation \eqref{eq:prob_intro1} interpolates between
the Porous Medium equation ($s=0$) and the Thin Film equation ($s=1$). A natural question is to investigate these singular limits for the constructed solutions, and rigorously relate these three equations.

\medskip

\noindent $\bullet$ \sc Power Law mobility function. \rm
The analysis of \eqref{eq:prob_intro1} has been restricted to a linear mobility function.
The case of a power law mobility function of the type $m(u)=u^n$ is, to the best of our knowledge,
open in dimension $d\ge 2$ (see \cite{ranat} for the one dimensional case
in a bounded interval with Neumann boundary conditions)
and deserves to be studied.
In particular, it would be interesting to understand the relation (if any) between the order of fractional differentiation
$s$, the exponent $n$ and the dimension $d$ for the existence of nontrivial compactly supported self-similar solutions.
When $s=1$ and $d=1$ a quite complete picture is given in \cite{BPW},  while for $s=0$ (PME) the situation is understood  in all dimensions \cite{Vaz2007}. For the porous medium equation with nonlocal pressure, case $-1<s<0$, this is studied in \cite{dTSV, dTSV1, dTSV2}, and for $s=-1$ in \cite{SerfVaz}.

\medskip

\noindent $\bullet$ \sc Relation to Cahn Hilliard Equation. \rm
The analysis of \eqref{eq:prob_intro1} suggests that it would be
interesting to consider the following evolution
\begin{equation}
\begin{cases}
\partial_t u = \div (m(u)\nabla p)\,\,\,\hbox{ in } \,\,\,\Rd\times (0,+\infty)\\
 w = \Ls u + f(u),\,\,\,\hbox{ in }\,\,\,\Rd\times (0,+\infty),
\end{cases}
\end{equation}
where $f:\mathbb{R}\to \mathbb{R}$.
The equation above can be considered as fractional version of the Cahn-Hilliard equation with nonconstant mobility, and to the best
of our knowledge, it has been studied only in \cite{ABG} for bounded
domains with Neumann boundary conditions and with $m$ independent of $u$.
The Cahn-Hilliard equation plays a central role in material science
and its analysis (see, among the others, \cite{barret-blowey}, \cite{barret-garcke}, \cite{elliot-garcke}) suggests that there should be a precise relation
between the mobility function and the nonlinearity $f$.

\medskip

\noindent  $\bullet$ \sc Integrated equation. \rm A transformation that has been very useful in the study of similar  equations of order from 0 to 2 in one space dimension is the integration transformation
$$
v(x,t)=\int_{-\infty}^x u(y,t)\,dy.
$$
This allows to pass from equation \eqref{eq:prob_intro1}, i.e., $u_u=(u(p(u)_x)_x$,
to  \ $v_t=v_xp(v_x)_x$, which for $p(u)=\Ls u$ gives
$$
v_t= v_x\,(\Ls v)_{xx}\,.
$$
Our results can be transferred to the latter equation but otherwise no more seems to be known. Let us point out that the study of that equation for $-1<s<0$  has been very fruitful thanks to the maximum principle that allows for the theory of viscosity solutions and comparison results, cf. \cite{BKM2010, dTSV}.

\medskip

\noindent  $\bullet$
\sc Numerics. \rm  The theoretical results would  greatly benefit from the development of efficient numerical methods for \eqref{eq:prob_intro1}, in particular in dimension one, in view of the potential application to cracks dynamics (see \cite{imbert_mellet11}
and \cite{imbert_mellet15} and the references therein.)

\section*{Acknowledgements}
{ \small AS is a member of the GNAMPA (Gruppo Nazionale per l'Analisi Matematica, la Probabilit\`a e le loro Applicazioni)
group of INdAM and acknowledges the partial support of the MIUR-PRIN Grant 2010A2TFX2 "Calculus of Variations".   \
JLV was funded by MTM2017-85449-P (Spain) and benefitted from an Honorary Professorship at Univ. Complutense de Madrid. He acknowledges the hospitality of the Dipartimento di Matematica ``F. Casorati" of the
University of Pavia where part of this work has been done.
The authors would like to thank Giuseppe Toscani for interesting discussions on the subject of the paper.
The authors would like to acknowledge the referees for the careful reading of the paper and for their positive criticism.}

\medskip

%newpage

\addcontentsline{toc}{section}{~~~References}

\medskip

\noindent 2010 \textit{Mathematics Subject Classification.} 35R11 (35G25, 35K46, 35K65 35C06).
%35K57,  %Reaction-diffusion equations
%35K65, %Parabolic partial differential equations of degenerate type
%35C07,  	%Traveling wave solutions
%35K55, %Nonlinear PDE of parabolic type

\smallskip

\noindent \textit{Keywords and phrases.} Fractional operators, thin film equations, self-similar solutions, obstacle problem.
\normalcolor

\smallskip

\noindent {\sc Addresses:}

\noindent Antonio Segatti. Dipartimento di Matematica ``F. Casorati", Universit\`a di Pavia,\\
Via Ferrata 1, 27100 Pavia, Italy. \\ e-mail address:~\texttt{antonio.segatti@unipv.it}

\smallskip

\noindent Juan Luis V\'azquez. Depto. de Matem\'{a}ticas, Universidad
Aut\'{o}noma de Madrid, \\Campus de Cantoblanco, 28049 Madrid, Spain.  \\ e-mail address:~\texttt{juanluis.vazquez@uam.es}

\end{document}